\theoremstyle{plain}
	\newtheorem{theorem}{Theorem}[section]
	\newtheorem{proposition}[theorem]{Proposition}    
	\newtheorem{lemma}[theorem]{Lemma}          
	\newtheorem{corollary}[theorem]{Corollary}
\theoremstyle{definition}
	\newtheorem{definition}[theorem]{Definition}
	\newtheorem{remark}[theorem]{Remark}
\DeclareMathOperator{\meas}{meas}
 \def\NN{{\mathbb N}}  
 \def\QQ{{\mathbb Q}} \def\RR{{\mathbb R}}
\def\cD{{\mathcal D}}   
\def\cJ{{\mathcal J}}   
 \def\cN{{\mathcal N}} \def\cO{{\mathcal O}} 
\def\cP{{\mathcal P}}   
\def\cV{{\mathcal V}} \def\cW{{\mathcal W}}
  \def\mR{{\mathfrak R}} 
  \def\mU{{\mathfrak U}} 
\def\mV{{\mathfrak V}}
\begin{document}

\begin{frontmatter}
\title{Existence of solutions for a bi-species kinetic model of a cylindrical Langmuir probe}    
\author{M. Badsi}
%\corref{cor1}
%\cortext[cor1]{Corresponding author}
%\ead{mehdi.badsi@univ-nantes.fr}
\author{L. Godard-Cadillac}
\address{Nantes Université, Laboratoire de Mathématiques Jean Leray, \fnref{label3}
2 Chemin de la Houssinière BP 92208,\\ 44322 Nantes Cedex 3} 
\begin{abstract}
In this article, we study a collisionless kinetic model for plasmas in the neighborhood of a cylindrical metallic Langmuir probe. 
This model consists in a bi-species Vlasov-Poisson equation in a domain contained between two cylinders with prescribed boundary conditions. 
The interior cylinder models the probe while the exterior cylinder models the interaction with the plasma core. We prove the existence of a weak-strong solution for this model in the sense that we get a weak solution for the 2 Vlasov equations and a strong solution for the Poisson equation. 
The first parts of the article are devoted to explain the model and proceed to a detailed study of the Vlasov equations. This study then leads to a reformulation of the Poisson equation as a 1D non-linear and non-local equation and we prove it admits a strong solution using an iterative fixed-point procedure.
 \end{abstract}
 \begin{keyword}
cylindrical Langmuir probe; stationary Vlasov-Poisson equations; boundary value problem; non-local semi-linear Poisson equation;
 \end{keyword}
\end{frontmatter}

%%%%%%%%%%%%%%%%%%%%%%%%%%%%%%%%%%%%%%%%%%%%%%%%%%%%%%%%%%%%%%%%
%%%%%%%%%%%%%%%%%%%%%%%%%%%%%%%%%%%%%%%%%%%%%%%%%%%%%%%%%%%%%%%%
%%%%%%%%%%%%%%%%%%%%%%%%%%%%%%%%%%%%%%%%%%%%%%%%%%%%%%%%%%%%%%%%
\section*{Introduction}
The Langmuir probe is a measurement device that is used to determine the local properties of a plasma such as its density, temperature and plasma potential known as plasma parameters. It is used in a wide range of applications. In practice, to determine the plasma parameters, the probe voltage is varied within a sufficiently large range and the collected current is recorded. The curve of the collected current versus the applied probe voltage is called the characteristic of the probe. It is the main object of interest in the probe modeling theory. The modeling of probes has been the aim of a lot of physical theories and several works aim at studying in detail these theories (see for instance~\cite{langmuir,Allen, Bernstein-Rabinowitz}). For a kinetic modeling of the Langmuir probe, we refer the reader to the monograph of Laframboise \cite{Laframboise} for a  general overview where both cylindrical and spherical probe models based on the stationary Vlasov-Poisson equations are proposed. Some discussions on the particles orbits and numerical simulations can also be found.

At the mathematical level,  existence theories for kinetic equations modeling plasma particles interacting with a probe in a two dimensional setting is not well-known. There is nevertheless several results concerning stationary solutions for the Vlasov-Poisson equations. The more relevant within the context of probe is the work of Greengard and Raviart \cite{Raviart} which deals with the one dimensional stationary solutions of Vlasov-Poisson boundary value problem where a very complete analysis of particles trajectories is made. An extension of this work by Degond and al to the case of a cylindrically symmetric diode can be found in \cite{Degond-Raviart-Poupaud-Jaffard}. On the contrary to the model that we study here,  their work considers one species of particles and the analysis of existence uses a maximum principle for the Poisson equation. Our approach is different and based on explicit expression of the macroscopic densities. This approach gives a good understanding of the trajectories of the particles and of the effective electrical potential as it is a constructive approach. This is also of particular interest in view of the numerical simulations. We also mention the work of Bernis \cite{Bernis} which is concerned with the existence of stationary solution with cylindrical symmetry for the Vlasov-Poisson equations in the whole space. Others works on stationary Vlasov-Poisson equations can be found in the non exhaustive list \cite{Knopf,Pokhozhaev,Rein-stationary,Belyaeva,Badsi_Campos-Pinto_Depres_Godard-Cadillac_2021}.

In this work, we consider the modeling of a cylindrical probe immersed in a plasma made of one species of ions and of electrons and its analysis. 
We use a collisionless kinetic description to model the transport of particles under the action of the self consistent electric potential. 
The unknown are assumed to obey the stationary Vlasov-Poisson equations written in polar coordinates. 
To model the interaction with the probe, we assume that particles are emitted from the core plasma while at the probe particles are absorbed. 
The probe potential is fixed to some arbitrary value while in the plasma the electric potential is taken equal to a reference potential value. 
To construct weak solutions of the Vlasov equation,  we use the method of characteristics and the conservation of the local energy and angular momentum to decompose the phase space for each species of particles. 
This decomposition of the phase space yields the definition of two distinct regions : one corresponds to trajectories of particles that reach the probe, the other one corresponds to trajectories that do not reach the probe. 
Because this decomposition is made in full generality, it introduces the study of the potential barrier (both its height and position) that separates the trajectories of the particles that reach the probe from the others. 
On closed trajectories (not connected to the boundaries), our solution is taken to be zero though it could be any other distribution function. 

The study of these different regions of the phase space eventually gives a compact reformulation of the source term in the Poisson equation that involves non-linear and non-local terms.
To deal with non-local terms, the strategy consists first in replacing them by parameters. In such a situation, the existence of a solution follows by standard variational arguments.
In a second time, we adjust these parameters in such a way that we can recover the initial non-local equation. We proceed by using a fixed-point procedure so that the parameters are expected to converge towards the associated terms.
The main technical difficulty lays in obtaining the convergence of the solution itself during this fixed-point procedure. 
The convergence is obtained using  three main ingredients:  a  general $L^{\infty}$ estimate on the macroscopic density that is uniform in the electric potential, a Hölder estimate on the non-linear term and continuity properties on the non-local terms. 
These estimates are obtained provided the incoming distribution functions obey some appropriate integrability properties in velocities which is reminiscent of the work of \cite{Raviart}. 
The obtained sequence is then proved to converge towards a solution of the original problem. The qualitative description of the solution and its numerical simulation will be the purpose of a future work.

\section{Modeling the probe}
We consider a non collisional and unmagnetized plasma made of one species of ions and of electrons in which is immersed a cylindrical probe. The radius of the probe is $r_{p} > 0$ and the length of its axis is $L > 0.$ We assume $L \gg r_{p}$ so that an invariance along the probe axis is assumed. Then, we only model the planar motion of particles in the open set $\Omega = \lbrace (x,y) \in \RR^2 \: : \: r_{p}^2 <x^2+y^2 < r_{b}^2 \rbrace$ where $r_{b} >r_{p}$ is an outer boundary radius (see Figure~\ref{fig:probe_drawing}). Outside the radius $r_b$ lays the plasma core.
\begin{figure}[h!]
\center
 \scalebox{0.4}{\input{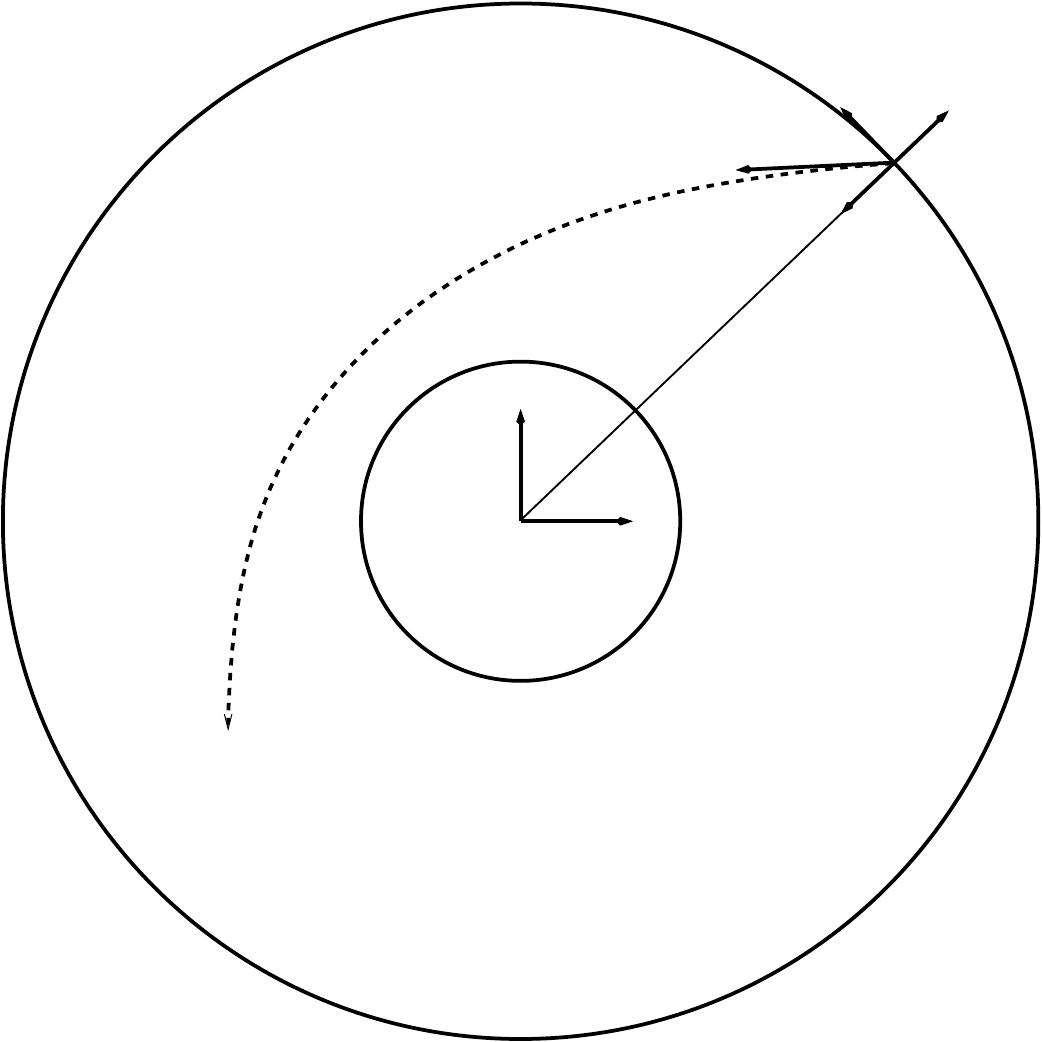_t} } 
 \caption{Sketch of a trajectory of a particle into a radial force field entering at $r = r_{b}$ with a velocity $\mathbbm{v}$. }\label{fig:probe_drawing}
\end{figure}
%%%%%%%%%%%%%%%%%%%%%%%%%%%%%%%%%%%%%%%%%%%%%%%%%%%%%%%%%%%%%%%%
%%%%%%%%%%%%%%%%%%%%%%%%%%%%%%%%%%%%%%%%%%%%%%%%%%%%%%%%%%%%%%%%
\subsection{The Vlasov-Poisson equations in polar coordinates}
 In cartesian coordinates, particles positions are denoted $\mathbbm{x} := (x,y)$ and velocities are denoted $\mathbbm{v} := (v_{x},v_{y})$. In polar coordinates, particles positions write $\mathbbm{x} = (x,y) = r \mathbbm{e}_{r}$ with $r = \sqrt{x^2+y^2}$ and $\mathbbm{e_{r}} = (\cos \theta, \sin \theta)$, and particles velocities write $\mathbbm{v} := (v_{x},v_{y}) = v_{r} \mathbbm{e}_{r} + v_{\theta} \mathbbm{e}_{\theta}$ with $v_{r} = \mathbbm{v} \cdot \mathbbm{e}_{r}$, $v_{\theta} = \mathbbm{v} \cdot \mathbbm{e}_{\theta}$ and $\mathbbm{e_{\theta}} = (-\sin \theta, \cos \theta)$. The unknown are the non negative particles distribution functions of ions and electrons in the phase space $(r,v_{r},v_{\theta}) \in [r_{p},r_{b}] \times \RR^2$ and the electrostatic potential. They are denoted $f_{i}(r,v_{r},v_{\theta})$, $f_{e}(r,v_{r},v_{\theta})$ and  $\phi(r)$. They are assumed to obey the Vlasov-Poisson equations which in polar coordinates write:
\begin{align}
v_r\, \partial_{r} f_i - \frac{v_{r} v_{\theta}}{r} \partial_{v_{\theta}}f_{i} + \left( \frac{v_{\theta}^2}{r} - \frac{q}{m_{i}} \partial_{r} \phi \right) \,\partial_{v_{r}} f_{i} = 0, \quad  \forall (r,v_{r},v_{\theta}) \in (r_{p},r_{b}) \times \RR^2 \label{Vlasov-i-1}\\
v_r\, \partial_{r} f_e - \frac{v_{r} v_{\theta}}{r} \partial_{v_{\theta}}f_{e} + \left( \frac{v_{\theta}^2}{r} + \frac{q}{m_{e}} \partial_{r} \phi \right) \,\partial_{v_{r}} f_{e}= 0, \quad \forall (r,v_{r},v_{\theta}) \in (r_{p},r_{b}) \times \RR^2 \label{Vlasov-e-1} \\
-\frac{1}{r} \frac{d}{dr}\bigg(r \frac{d\phi}{dr}\bigg)(r) = \frac{q}{\varepsilon_{0}} \int_{\RR^2}\Big(f_{i}(r,v_{r},v_{\theta}) - f_{e}(r,v_{r},v_\theta) \Big)dv_{r}\, dv_{\theta}, \quad \forall r \in (r_{p},r_{b})\label{Poisson-1},
\end{align}
where $q > 0$ is the electrical elementary charge, $\varepsilon_{0} > 0$ is the vacuum electrical permittivity and $m_{i} > m_{e}  > 0$ are respectively the mass of one ion and of one electron.
Equations \eqref{Vlasov-i-1}-\eqref{Poisson-1} model the transport of the charged particles under the action of the self-consistent electrostatic potential.
For the sake of conciseness, we denote for all $r \in [r_{p},r_{b}]$ the ions and electrons macroscopic charge densities by:
\begin{align}
n_{i}(r) = q\int_{\RR^2} f_{i}(r,v_{r},v_{\theta})\, d v_{r}\, d v_{\theta}, \quad n_{e}(r) = q\int_{\RR^2} f_{e}(r,v_{r},v_{\theta})\, dv_{r}\, dv_{\theta}.
\end{align}
In the context of the Langmuir probe theory \cite{Laframboise,langmuir} the radial current density is an important quantity to be computed. For each species $s = i,e$ and all $r \in [r_{p},r_{b}]$ it is defined by:
\begin{align}
\mathbbm{J}_{s}(r) = j_{s}(r)\, \mathbbm{e}_{r},
\end{align}

\begin{align}
j_{s}(r) := q \int_{\RR^2} f_{i}(r,v_r,v_{\theta}) \,v_{r}\, dv_{r}\, dv_{\theta}.
\end{align}

\subsection{Boundary conditions in the plasma and at the probe} 
We assume that far away from the outer boundary radius $ r > r_{b}$ there exists a ionizing source of particles (the plasma core) that makes both ions and electrons enter at $r = r_{b}$. We model these incoming particles from the plasma core by the following boundary condition
\begin{align}
  \forall (v_{r},v_{\theta}) \in \RR^{-}_{*} \times \RR, \qquad   f_{i}(r_{b},v_{r},v_{\theta}) = f_{i}^{b}(v_{r},v_{\theta}), \quad f_{e}(r_{b},v_{r},v_{\theta}) = f_{e}^{b}(v_{r},v_{\theta}), \label{bc-plasma-1}
\end{align}
where  $f_{i}^{b} : \RR^{-}_{*} \times \RR \rightarrow \RR^{+}$ and $f_{e}^{b} : \RR^{-}_{*} \times \RR \rightarrow \RR^{+}$ denote arbitrarily given distribution functions of the incoming particles. These functions must be assumed to be symmetric with respect to the angular velocity $v_{\theta}$ to ensure the absence of ortho-radial current and the invariance with respect to the angular variable. The zero potential reference is taken to be at $r = r_{b}$:
\begin{equation}\label{bc-phi-1}
    \phi(r_b)=0.
\end{equation}
We assume the probe to be non-emitting, that is at $r = r_{p}$, no particles are emitted in the direction to the plasma. We also consider that the potential of the probe is fixed at a value $\phi_{p} \in \RR$.  The boundary conditions at $r=r_p$ then write 
\begin{align}
\forall (v_{r},v_{\theta}) \in \RR^{+}_{*} \times \RR, \quad f_{i}(r_{p},v_{r},v_{\theta}) = 0, \quad  f_{e}(r_{p},v_{r},v_{\theta}) = 0, \label{bc-probe-1}
\end{align}
\begin{align}
\phi(r_{p}) = \phi_{p}. \label{bc-phi-2}
\end{align}

\begin{remark} Since $f_{i}^{b}(v_{r},v_{\theta})$ and $f_{e}^{b}(v_{r},v_{\theta})$ are both symmetric with respect to $v_{\theta}$ then the solutions of the Vlasov equations \eqref{Vlasov-i-1} and \eqref{Vlasov-e-1} are also symmetric with respect to $v_{\theta}$. There is not any ortho-radial current: $\int_{\RR^2} f_{s}(r,v_{r},v_{\theta})v_{\theta} dv_{r} dv_{\theta} = 0,$  for each species $s = i,e$ and for all $r \in [r_{p},r_{b}]$.
\end{remark}

%%%%%%%%%%%%%%%%%%%%%%%%%%%%%%%%%%%%%%%%%%%%%%%%%%%%%%%%%%%%%%%%
%%%%%%%%%%%%%%%%%%%%%%%%%%%%%%%%%%%%%%%%%%%%%%%%%%%%%%%%%%%%%%%%
\subsection{Dimensionless equations}
Consider the following physical constants $\lambda =  \sqrt{\varepsilon_{0} k_b T_{e}/(q^2 N_{0})}$ (Debye length) and $c_{s} = $ $\sqrt{k_b T_{e}/m_{i}}$ (ions acoustic speed)
where $T_{e} \gg T_{i}$ is a reference electron temperature, $N_{0} > 0$ is a reference plasma density and $k_b$ denotes the Boltzmann constant. We define the rescaled variables
\begin{align}
\hat{r} = \frac{r}{r_{p}}, \quad \hat{v_{r}} = \frac{v_{r}}{c_{s}}, \quad \hat{v_{\theta}} = \frac{v_{\theta}}{c_{s}}.
\end{align}
We also define the rescaled particles distribution functions and the rescaled electrostatic potential
\begin{align}
\hat{f_{i}}(\hat{r},\hat{v_{r}},\hat{v_{\theta}}) = \frac{c_s^2}{N_{0}} f_{i}(r,v_r,v_{\theta}), \quad \hat{f_{e}}(\hat{r},\hat{v_{r}},\hat{v_{\theta}}) = \frac{c_s^2}{N_{0}} f_{e}(r,v_r,v_{\theta}), \quad \hat{\phi}(\hat{r}) =  \frac{q \phi(r)}{k_bT_{e}}.
\end{align}

The rescaled unknown verify the dimensionless Vlasov-Poisson equations which after dropping the dimensionless notation $\hat{.}$  write:
\begin{align}
&v_r\, \partial_{r} f_i - \frac{v_{r} v_{\theta}}{r} \partial_{v_{\theta}}f_{i} + \left( \frac{v_{\theta}^2}{r} - \partial_{r} \phi \right) \,\partial_{v_{r}} f_{i} = 0, \qquad  \forall (r,v_{r},v_{\theta}) \in (1,r_{b}) \times \RR^2, \label{Vlasov-i}\\
&v_r\, \partial_{r} f_e - \frac{v_{r} v_{\theta}}{r} \partial_{v_{\theta}}f_{e} + \left( \frac{v_{\theta}^2}{r} + \frac{1}{\mu} \partial_{r} \phi \right) \,\partial_{v_{r}} f_{e} = 0, \qquad \forall (r,v_{r},v_{\theta}) \in (1,r_{b}) \times \RR^2, \label{Vlasov-e} \\
&-\frac{\overline{\lambda}^2}{r}\frac{d}{dr}\bigg(r\,\frac{d\phi}{dr}\bigg)(r) = n_{i}(r)-n_{e}(r), \qquad \forall r \in (1,r_{b})\label{Poisson},
\end{align}

where $\mu = m_{e}/m_{i}$ is the mass ratio and $\overline{\lambda} = \lambda/r_{p}$ is a normalized Debye length.
An additional re-scaling of the velocities space for the electronic Vlasov equation \eqref{Vlasov-e} is given by the change of variables and unknown
\begin{align}
    \hat{v_{r}} = \sqrt{\mu}\, v_{r}, \quad \hat{v_{\theta}} = \sqrt{\mu}\, v_{\theta}, \quad \hat{f}_{e}(r,\hat{v_{r}},\hat{v_{\theta}}) = \mu\, f_{e}(r,v_{r},v_{\theta})
\end{align}
which yields again after dropping the notation $\hat{.}$ the same Vlasov equation \eqref{Vlasov-e} with $\mu = 1$. In the Poisson equation \eqref{Poisson},
the dimensionless macroscopic densities are then given by
\begin{align}
n_{i}(r) = \int_{\RR^2} f_{i}(r,v_r,v_{\theta})\, dv_{r}\, dv_{\theta}, \quad n_{e}(r) = \int_{\RR^2} f_{e}(r,v_{r},v_{\theta})\, dv_{r}\, dv_{\theta}
\end{align}
and the dimensionless radial currents are given by
\begin{align}
j_{i}(r) = \int_{\RR^2} f_{i}(r,v_{r},v_{\theta})\, v_{r}\, dv_{r}\, dv_{\theta}, \quad j_{e}(r) = \frac{1}{\sqrt{\mu}} \int_{\RR^2} f_{e}(r,v_{r},v_{\theta})\, v_{r}\, dv_{r}\, dv_{\theta}.
\end{align}
The  factor $1/\sqrt{\mu}$ is natural in view of the difference of mobility between ions and electrons.
The obtained problem is 
\begin{equation} \label{VPBVP}
    \begin{cases}
    &v_r\, \partial_{r} f_i - \frac{v_{r} v_{\theta}}{r} \partial_{v_{\theta}}f_{i} + \left( \frac{v_{\theta}^2}{r} - \partial_{r} \phi \right) \,\partial_{v_{r}} f_{i} = 0,\qquad \forall (r,v_{r},v_{\theta}) \in (1,r_{b}) \times \RR^2,\\
    &v_r\, \partial_{r} f_e - \frac{v_{r} v_{\theta}}{r} \partial_{v_{\theta}}f_{e} + \left( \frac{v_{\theta}^2}{r} +  \partial_{r} \phi \right) \,\partial_{v_{r}} f_{e} = 0, \qquad \forall (r,v_{r},v_{\theta}) \in (1,r_{b}) \times \RR^2,\\
    &-\frac{\overline{\lambda}^2}{r}\frac{d}{dr}\Big(r\,\frac{d\phi}{dr}\Big)(r) = n_{i}(r)-n_{e}(r), \qquad \forall r \in (1,r_{b}),\\
    &f_{i}(r_{b},v_{r},v_{\theta}) = f_{i}^{b}(v_{r},v_{\theta}), \quad f_{e}(r_{b},v_{r},v_{\theta}) = f_{e}^{b}(v_{r},v_{\theta}),\quad \forall (v_{r},v_{\theta}) \in \RR^{-}_{*} \times \RR, \quad \\
    &f_{i}(r_{p},v_{r},v_{\theta}) = 0, \quad  f_{e}(r_{p},v_{r},v_{\theta}) = 0, \quad \forall (v_{r},v_{\theta}) \in \RR^{+}_{*} \times \RR\\
    &\phi(r_{p}) = \phi_{p}, \quad \phi(r_{b}) = 0.
    \end{cases}
\end{equation}

Since in the proof of the existence of solutions the physical parameter $\overline{\lambda}$ is of little interest,  we consider in the following $\overline{\lambda} = 1.$
We nevertheless mention that in the qualitative description of the solutions the physical regime $\overline{\lambda}$  small is important  because a boundary layer known as the Debye sheath exists in the vicinity of the probe. 
See for instance \cite{Bohm,Riemann, Laframboise,badsi_krm} for further physical and mathematical details. 

\section{Main result}
We first define the notion of solutions that we consider for the Vlasov-Poisson equations with the boundaries and then state our main result. In this regard, we need some notations, we introduce the set of outgoing particles, the set of incoming particles:
\begin{align*}
    \Sigma^{\textnormal{out}} := \big( \lbrace r_{b} \rbrace \times \RR^{+} \times \RR \big) \cup \big( \lbrace 1 \rbrace \times \RR^{-} \times \RR\big), \quad \Sigma^{\textnormal{inc}} := \big(\lbrace r_{b} \rbrace \times \RR^{-}_{*} \times \RR\big)\cup\big(\lbrace{1\rbrace} \times \RR^{+}_\ast \times \RR\big)
\end{align*}
and denote the domain of work $Q := (1,r_{b}) \times \RR^2$. Observe that $\Sigma^{\textnormal{out}}=\partial Q\setminus\Sigma^{\textnormal{inc}}$. Define also $$
\mu_{s}:=
\begin{cases} 
1 \text{ if } s = i,\\
-1 \text{ if } s = e.
\end{cases}
$$
Solutions of the Vlasov equations with boundaries are not necessarily classical even though the incoming boundary data $f_{i}^{b}$ and $f_{e}^{b}$ are smooth. 
This is due to the geometry of the characteristic curves (they are defined in section \ref{sec:linear_vlasov}) and the boundary conditions \eqref{bc-plasma-1},\eqref{bc-probe-1}. A discontinuity in the solution at the boundary can occur and be propagated by the characteristics into the interior of the domain.
Therefore, we shall generically consider weak solutions for the Vlasov equations.
\begin{definition}[Weak solution to Vlasov equation] \label{def_weak_sol_vlasov} Let $\phi \in W^{1,\infty}(1,r_{b})$. Let $s = i,e$. Let $f_{s}\in L^1(Q)$ and $f_{s}^{b}\in L^1(\Sigma^{\textnormal{inc}})$. We say that $f_{s}$ is a weak solution of the Vlasov equation with the boundary condition $f_{s}^{b}$ if for every $\psi \in C^{1}\left( \overline{Q} \right)$ compactly supported on $\overline{Q}$ and such that $\psi_{| \Sigma^{\textnormal{out}}} = 0$, the following equality holds:
\begin{align}\label{eq:weak Vlasov}
     \int_1^{r_b} \int_{-\infty}^{+\infty}\int_0^{+\infty} &f_{s}(r,v_r,v_\theta)\Psi(r,v_r,v_\theta)\,dv_r\,dv_\theta\,dr \\&= \int_{-\infty}^{+\infty}\int_{-\infty}^{0}f_{s}^{b}(v_r,v_\theta)\,\psi(r_b,v_r,v_\theta)\, v_r\,dv_r\,dv_\theta
\end{align}
where
\begin{align*}
    \Psi(r,v_{r},v_{\theta}) = & v_{r} \partial_{r} \psi(r,v_{r},v_{\theta}) +  \left( \frac{v_{\theta}^{2}}{r}-\mu_{s}\partial_{r} \phi(r) \right) \partial_{v_{r}} \psi(r,v_{r},v_{\theta}) \\&- \frac{v_{r}}{r} \partial_{v_{\theta}} (v_{\theta} \psi)(r,v_{r},v_{\theta}).
\end{align*}
\end{definition}
This weak formulation of the Vlasov equation~\eqref{eq:weak Vlasov} can be reformulated in terms of duality brackets:
\begin{equation*}
    \left<\Psi,f_s\right>_{L^\infty(Q),L^1(Q)}=\left<\big(v_r\,\psi_{| \Sigma^{\textnormal{inc}}}\big),f_s^b\right>_{L^\infty(\Sigma^{\textnormal{inc}}),L^1(\Sigma^{\textnormal{inc}})}
\end{equation*}

The solution for the studied Vlasov-Poisson problem are weak solutions for the Vlasov equation and point-wise solution for the Poisson equation:
\begin{definition}[Weak-strong solution of the Vlasov-Poisson problem]\label{def_sol} Let $\phi_{p} \in \RR$. Let $f_{i}^{b}$ and $f_{e}^{b}$ two integrable functions on $\Sigma^{\textnormal{inc}}.$ We say that a triplet $(f_{i},f_{e},\phi)$ is a weak-strong solution of the Vlasov-Poisson Langmuir problem \eqref{VPBVP} if:
\begin{itemize}
    \item $\phi \in W^{2,\infty}(1,r_{b})$ and $f_{i},f_{e}\in L^1(Q)$.
    \item $f_{i}$ and $f_{e}$ are weak solutions of the Vlasov equations in the sense of definition \ref{def_weak_sol_vlasov}. 
    \item $\phi$ satisfies the Poisson equation \eqref{Poisson} pointwise in $[1,r_b]$ and the Dirichlet boundary conditions \eqref{bc-phi-1}\eqref{bc-phi-2}.
\end{itemize}
\end{definition}
In the above definition the boundary data are assumed to be in $L^1$. The regularity $\phi \in W^{2,\infty}(1,r_{b})$ is sufficient to ensure the existence and uniqueness of the characteristics curves defined in section \eqref{sec:linear_vlasov}.

Concerning our main result, we make use for technical reasons of extra integrability conditions on the incoming fluxes. For that purpose we define the Banach space $L^1_L(L^\infty_w(w\,dw))$ as being the space of measurable functions of $\RR^2$ such that the following norm is finite: 
\begin{equation}\label{def:Lebesgue 1}
    \|f\|_{L^1_L(L^\infty_w(w\,dw))}:=\int_\RR\sup_{w\in\RR}\big|w\,f(w,L)\big|\,dL.
\end{equation}
We also define the Banach space $L^1_w(L^\infty_L\,;dw/|w|^\gamma)$ where $0<\gamma<1$ from the following norm:
\begin{equation}\label{def:Lebesgue 2}
    \|f\|_{L^1_w(L^\infty_L\,;dw/|w|^\gamma)}:=\int_\RR\sup_{L\in\RR}\big|f(w,L)\big|\,\frac{dw}{|w|^\gamma}.
\end{equation}
 Note that these two norms are finite if, for instance, we have the following estimate:
\begin{equation*}
    \forall\;(w,L)\in\RR^2,\qquad|f(w,L)|\leq\frac{1}{|w|+|L|^2+1}.
\end{equation*}

The main result of this article is the following:
\begin{theorem} Let $\phi_{p} \in \RR$. Let $f_{i}^{b}$ and $f_{e}^{b}$ be two non-negative integrable functions defined on $\RR_-\times\RR$ symmetrical for the second variable. Suppose moreover that, with $s=i,e$,
\begin{equation*}
    \|f_s^b\|_{L^1_L(L^\infty_w(w\,dw))}<+\infty\qquad\text{and}\qquad\|f_s^b\|_{L^1_w(L^\infty_L\,;dw/|w|^\gamma)}<+\infty.
\end{equation*}
for some $0<\gamma<1$. 

Then the Vlasov-Poisson problem \eqref{VPBVP} with boundary values $f_{i}^{b}$ and $f_{e}^{b}$ admits a solution in the sense of Definition \ref{def_sol}.
\end{theorem}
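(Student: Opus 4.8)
The plan is to eliminate the two kinetic unknowns in favour of the potential and reduce \eqref{VPBVP} to a single scalar non-local semilinear boundary value problem for $\phi$, then solve it by a fixed-point scheme. For a frozen $\phi\in W^{1,\infty}(1,r_b)$, the characteristic construction of Section~\ref{sec:linear_vlasov} provides explicit weak solutions $f_i[\phi]$, $f_e[\phi]$ of the two Vlasov equations: along characteristics the energy $\tfrac12(v_r^2+v_\theta^2)+\mu_s\phi(r)$ and the angular momentum $\ell=rv_\theta$ are conserved, so each species is transported along the level sets of the effective potential $r\mapsto \tfrac{\ell^2}{2r^2}+\mu_s\phi(r)$, and the incoming datum $f_s^b$ together with the vanishing emission \eqref{bc-probe-1} fixes the value of $f_s$ on every trajectory connected to a boundary (closed trajectories being assigned the value zero). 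Integrating in velocity and using the change of variables $(v_r,v_\theta)\mapsto(E,\ell)$, whose Jacobian produces the singular factor $1/|v_r|$, yields explicit functionals $n_i[\phi]$, $n_e[\phi]$, so that the Poisson equation in \eqref{VPBVP} becomes
\begin{equation*}
    -\frac1r\big(r\,\phi'\big)'=n_i[\phi]-n_e[\phi]\ \text{ on }(1,r_b),\qquad \phi(1)=\phi_p,\ \phi(r_b)=0 .
\end{equation*}
The non-locality enters only through the \emph{potential barrier}, i.e.\ the running maximum of the effective potential on $[r,r_b]$, which decides for each $(E,\ell)$ whether a trajectory reaches the probe.

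Next I would set up the iteration. Since the barrier height and position are the only genuinely non-local objects in $n_s[\phi]$, I would freeze them as auxiliary parameters: for a given admissible potential $\phi^{(k)}$ I compute these barrier data, substitute them into the density functionals so that the right-hand side becomes a \emph{local} (pointwise in $\phi$) nonlinearity, and then solve the resulting semilinear elliptic ODE for the next iterate $\phi^{(k+1)}\in W^{2,\infty}(1,r_b)$ by standard variational arguments (minimisation of the associated strictly convex energy, or monotone-operator theory), which also imposes the Dirichlet data \eqref{bc-phi-1}--\eqref{bc-phi-2}. This defines a map $\phi^{(k)}\mapsto\phi^{(k+1)}$ whose fixed points are exactly the sought potentials.

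To run this scheme to convergence I would establish the three estimates announced in the introduction. First, a bound $\|n_s[\phi]\|_{L^\infty(1,r_b)}\le C$ that is \emph{uniform in $\phi$}: here the norm $\|f_s^b\|_{L^1_L(L^\infty_w(w\,dw))}$ is used to dominate the velocity integral, the flux weight $w\,dw$ being tailored to the $|v_r^b|\,dv_r^b$ produced by the trajectory change of variables. This bound forces the right-hand side of Poisson into a fixed ball of $L^\infty$, whence $\phi^{(k+1)}$ stays in a fixed ball of $W^{2,\infty}\hookrightarrow\hookrightarrow C^1([1,r_b])$, giving both an a priori bound and compactness of the sequence. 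Second, a Hölder estimate on $\phi\mapsto n_s[\phi]$ in the $C^1$ topology; this is where the second norm $\|f_s^b\|_{L^1_w(L^\infty_L;\,dw/|w|^\gamma)}$ intervenes, the integrable weight $dw/|w|^\gamma$ (with $0<\gamma<1$) taming the singular factor $1/|v_r|$ generated at the turning points of the radial motion. Third, continuity of the non-local maps $\phi\mapsto(\text{barrier height},\text{barrier position})$ with respect to $C^1$. Granting these, I would extract a subsequence of $(\phi^{(k)})$ converging in $C^1$ and pass to the limit in both the frozen parameters and the nonlinearity, obtaining a genuine solution of the non-local Poisson equation; feeding this $\phi$ back into $f_i[\phi]$, $f_e[\phi]$ produces the weak–strong triplet of Definition~\ref{def_sol}, the regularity $\phi\in W^{2,\infty}$ following from the $L^\infty$ bound on $n_i[\phi]-n_e[\phi]$.

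The main obstacle is the third ingredient together with the limit passage: the barrier height and position depend on $\phi$ only through a maximum over a variable interval and are in general non-differentiable, so one must argue continuity by hand and show that the set of $(E,\ell)$ whose classification (reaches the probe / reflected / closed) changes under a small perturbation of $\phi$ contributes asymptotically nothing to the density. Controlling this requires precisely the weighted integrability of $f_s^b$ near $v_r=0$, because the misclassified trajectories are exactly those with energy close to the barrier, i.e.\ with small radial velocity; obtaining a uniform-in-$\phi$ Hölder modulus for these contributions is the delicate technical heart of the argument.
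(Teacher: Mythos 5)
Your proposal follows essentially the same route as the paper: explicit characteristic construction of $f_i[\phi]$, $f_e[\phi]$, reduction to a non-local semilinear Poisson equation, freezing of the barrier height and position as parameters, variational solution of the resulting local problem, and an iterative limit passage controlled by exactly the three ingredients the paper uses (a uniform-in-$\phi$ $L^\infty$ bound on the densities via the $L^1_L(L^\infty_w(w\,dw))$ norm, a Hölder estimate on the nonlinearity via the $L^1_w(L^\infty_L;dw/|w|^\gamma)$ norm, and continuity of the non-local barrier maps). You also correctly single out the delicate point, namely the near-barrier trajectories with small radial velocity, which is precisely where the paper's Lemma on the regularity of $\widetilde g$ and the convergence property of $\widetilde\rho$ do the work.
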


%%%%%%%%%%%%%%%%%%%%%%%%%%%%%%%%%%%%%%%%%%%%%%%%%%%%%%%%%%%%%%%%
%%%%%%%%%%%%%%%%%%%%%%%%%%%%%%%%%%%%%%%%%%%%%%%%%%%%%%%%%%%%%%%%
%%%%%%%%%%%%%%%%%%%%%%%%%%%%%%%%%%%%%%%%%%%%%%%%%%%%%%%%%%%%%%%%
\section{The linear Vlasov equations} \label{sec:linear_vlasov}
We consider for this section only the linear Vlasov equations \eqref{Vlasov-i} and \eqref{Vlasov-e} where for now the potential $\phi$ is fixed independently of the influence of the particles. 
The aim of the work done in this section is to reformulate the Vlasov equations to reduce the initial problem to a non-linear 1D Poisson equation.
We assume that $\phi\in W^{2,\infty}(1,r_{b})$, so that its derivative is Lipschitz continuous.
%%%%%%%%%%%%%%%%%%%%%%%%%%%%%%%%%%%%%%%%%%%%%%%%%%%%%%%%%%%%%%%%
%%%%%%%%%%%%%%%%%%%%%%%%%%%%%%%%%%%%%%%%%%%%%%%%%%%%%%%%%%%%%%%%
\subsection{Ionic phase diagram}
The characteristics associated with the Vlasov equation \eqref{Vlasov-i} are the solutions to the ordinary differential equations
\begin{equation} \label{char-i-1}
\begin{cases}\displaystyle
\frac{d}{dt}r(t) = v_{r}(t),\\
\displaystyle\frac{d}{dt}v_{r}(t) = \frac{v_{\theta}(t)^2}{r(t)} - \frac{d\phi}{dr}(r(t)),\\
\displaystyle\frac{d}{dt}v_{\theta}(t) = \frac{-v_{r}(t)\, v_{\theta}(t)}{r(t)}.
\end{cases}
\end{equation}
Since $d\phi/dr$ is Lipschitz continuous, for each initial condition $(r_{0},v_{r,0},v_{\theta,0}) \in (1,r_{b}) \times \RR^2$, Equations~\eqref{char-i-1} admits a unique solution $(r,v_r,v_\theta) \in C^{1}( \left(t_{\textnormal{inc}}(r_0,v_{r,0},v_{\theta,0}), t_{\textnormal{out}}(r_0,v_{r,0},v_{\theta,0})\right);$ $ [1,r_{b}] \times \RR^2)$ where 
\begin{align*}
&t_{\textnormal{inc}}(r_0,v_{r,0},v_{\theta,0}):= \inf \lbrace t' \leq 0 \: : \: r(t) \in (1,r_{b})\: \:  \forall t \in (t',0) \rbrace, \quad\\ &t_{\textnormal{out}}(r_0,v_{r,0},v_{\theta,0}):= \sup  \lbrace t' \geq 0 \: : \: r(t) \in (1,r_{b}) \: \:  \forall t \in (0,t') \rbrace
\end{align*}
denote respectively the incoming time and the outgoing time of the characteristics in the interval $(1,r_{b})$. 
They can be either finite of infinite.
Additionally, one has two constants of motion:  the total energy and the angular momentum. 
Indeed, the characteristics satisfy for all $t \in \left(t_{\textnormal{inc}}(r_0,v_{r,0},v_{\theta,0}), t_{\textnormal{out}}(r_0,v_{r,0},v_{\theta,0})\right),$
\begin{align*}
&\frac{d}{dt} \left(  \frac{v_{r}^{2}(t)+ v_{\theta}^2(t)}{2} + \phi(r(t)) \right)= 0, \\
&\frac{d}{dt} \left( r(t)v_{\theta}(t) \right) = 0.
\end{align*}
Therefore the characteristics are contained in the following level sets defined for $L \in \RR$ and $e \in \RR$ by
\begin{align*}
\mathcal{C}_{L,e} := \Big\lbrace (r,v_{r},v_{\theta}) \in (1,r_{b}) \times \RR^2 \: : \:  r v_{\theta} = L \quad\text{and}\quad \frac{v_r^{2} + v_{\theta}^2}{2} + \phi(r) = e \Big\rbrace.
\end{align*}
These sets give a description of the phase space according to the values of $L$ and $e.$ In this regard, it is convenient to introduce for $L \in \RR$ the effective potential defined by
\begin{align}
\forall r \in [1,r_{b}] \quad U_{L}(r) := \frac{L^2}{2r^2} + \phi(r).  \label{U_L}
\end{align}
Since $U_{L}$ is a continuous function on $[1,r_{b}]$, it reaches its maximum value at some point in $[1,r_{b}].$ Its maximum value is denoted 
\begin{align*}
\overline{U_{L}} := \max\limits_{r \in [1,r_{b}]}\;{ U_{L}(r)}.
\end{align*}
The maximal value $\overline{U_{L}}$ defines a global potential barrier for which a particle located at $r \in (1,r_{b})$ with velocity $v_{r}$ and $v_{\theta} = \frac{L}{r}$  such that $\frac{v_r^2}{2} + U_{L}(r) < \overline{U_{L}}$ cannot cross a point $a$ such that $U_{L}(a) = \overline{U}_{L}.$ 
Indeed, arguing by contradiction,  one would have by conservation of the total energy $\frac{v_r^2}{2} + U_{L}(r) =  \frac{v_{a}^2}{2} + U_{L}(a) $ for some $v_{a} \in \RR$ and thus $\frac{v_r^2}{2} + U_{L}(r) \geq \overline{U_{L}}$ which is a contradiction. 
Since we cannot make any assumption on the monotonicity of the function $U_{L}$, it may have many oscillations. 
In such a case, there exist several local potential barriers which yield the existence of trapping sets for the particles as sketched in figure \ref{fig:sketch-phase-space}. 
To construct a solution, we shall thus carefully decompose the phase space $(r,v_{r})$ for each $L \in \RR.$ 
Namely, we shall distinguish between characteristics that intersect the boundaries from those who do not and correspond to trapping sets (see for example \cite{Lucquin} for a definition of a trapping set).
An illustration of the phase space $(r,v_{r})$ corresponding to an effective potential $U_{L}$ having several extrema is given in figure \ref{fig:sketch-phase-space}.
\begin{figure}[h!]
\center
 \scalebox{0.8}{\input{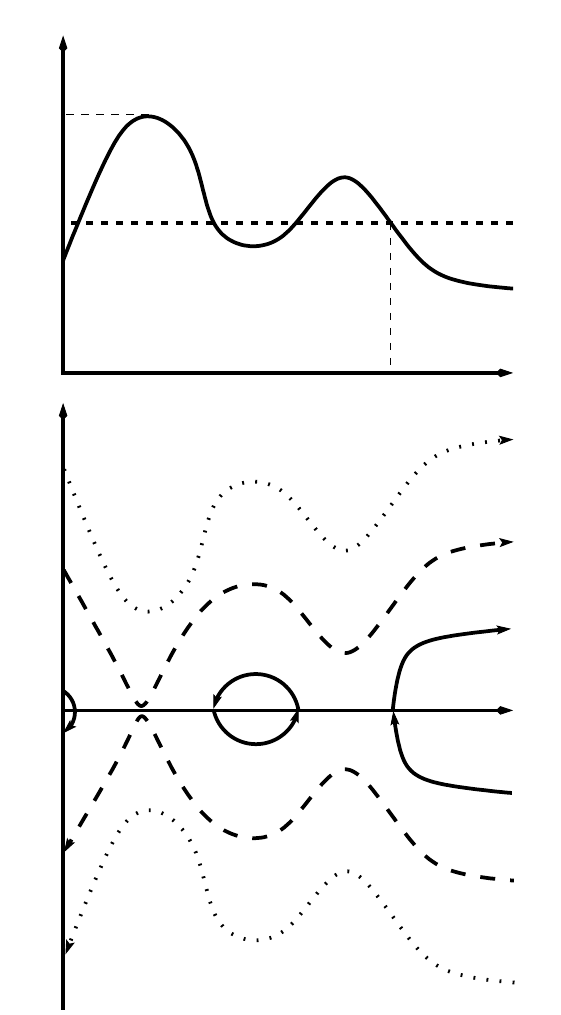_t} } 
 \caption{Schematic $(r,v_{r})$ phase space decomposition corresponding to an effective potential $U_{L}$. Dotted lines correspond to trajectories of energy level greater than $\overline{U}_{L}$. The dashed line corresponds to a separatrix curve of equation $\frac{v_{r}^2}{2} +U_{L}(r) = \overline{U}_{L}.$ The solid lines correspond to trajectories of energy level lower than $\overline{U}_{L}.$ }\label{fig:sketch-phase-space}
\end{figure}

\subsection*{Characteristics that originate from $r = r_{b}$}
Of particular interest, are those characteristics that originate from the boundary $r = r_{b}$ because they correspond to trajectories of particles coming from the plasma. One has two cases:
\begin{itemize}
\item \textbf{\mathversion{bold} Characteristics with energy level $e > \overline{U_{L}}.$} 
A point of the phase space $(r,v_{r})$ such that $e = \frac{v_{r}^{2}}{2} + U_{L}(r) > \overline{U_{L}}$ is on a characteristic that crosses $r = r_{b}.$ Especially, if $v_{r} < -\sqrt{2(\overline{U_{L}} - U_{L}(r))}$ there is a unique characteristic curve passing through $(r,v_{r})$ that originates from $r_{b}$ with a negative velocity $v_{b} = -\sqrt{v_{r}^{2} + 2(U_{L}(r)-U_{L}(r_{b}))}.$
\item  \textbf{\mathversion{bold} Characteristics with energy level $ e \in [U_{L}(r_{b}),\overline{U_{L}}].$}
If $U_{L}$ has several local maximum, the level curves of equation $\frac{v_{r}^{2}}{2} + U_{L}(r) = e$ may be associated with either closed characteristics or characteristics that originate from $r = r_{b}.$ To distinguish between them, we consider the number
\begin{align}\label{def:r_i}
    r_{i}(L,e) := \min \lbrace a \in [1,r_{b}] \: : \: U_{L}(s) \leq e \:, \forall s \in [a,r_{b}] \rbrace.
\end{align}
By continuity of the function $U_{L}$ this number is well defined and the interval $[r_{i}(L,e),r_{b}]$ is the largest interval containing the point $r_{b}$ in which $U_{L}$ is below the energy level $e \in [ U_{L}(r_{b}), \overline{U_{L}}]$ . If $(r,v_{r})$ is such that $\frac{v_{r}^2}{2} + U_{L}(r) = e \in [ U_{L}(r_{b}), \overline{U_{L}}]$ there is a unique characteristic curve passing through $(r,v_{r})$ originates from $r_{b}$ with a negative velocity $v_{b}= -\sqrt{v_{r}^2 + 2 (U_{L}(r)-U_{L}(r_{b}))}$ if and only if $r > r_{i}(L,e).$
\end{itemize}
The above discussion leads to the following decomposition of the phase space between characteristics that have high energy and characteristics that have low energy:
\begin{align}
&\cD^{b}_i(L) := \cD^{b,1}(L) \cup \cD^{b,2}(L), \label{d_i_b}\\
&    \cD^{b,1}_i(L) = \left \lbrace (r,v_{r}) \in (1,r_{b}) \times \RR \: : \: v_{r} < -\sqrt{2(\overline{U}_{L}-U_{L}(r))} \right \rbrace,\label{d_i_b_1}\\ 
&    \cD^{b,2}_i(L) =  \bigg \lbrace (r,v_{r}) \in (1,r_{b}) \times \RR \: : U_{L}(r_{b}) < \underbrace{\frac{v_{r}^2}{2} + U_{L}(r)}_{=:e} < \overline{U_{L}} \:  , \: r > r_{i}(L,e) \bigg \rbrace. \label{d_i_b_2}
\end{align}
For each point $(r,v_{r}) \in \cD^{b}_i(L)$ there exists a unique characteristics curves that passes through $(r,v_{r})$ and originates from $r = r_{b}$ with a negative velocity $v_{b}= -\sqrt{v_{r}^2 + 2 (U_{L}(r)-U_{L}(r_{b}))}.$

\subsection*{Characteristics that are closed or originates from $r =1$}
Other trajectories are either closed or originate from $r = 1$.
They correspond to point of the phase space $(r,v_{r})$ which are in the complement set of $\cD^{b}_i(L),$ that is
\begin{align*}
    \cD^{pc}_i(L) = \left( (1,r_{b}) \times \RR \right) \setminus \cD^{b}_i(L).
\end{align*}
The function $f_i$ defined by \eqref{def_fi} is taken to be zero on closed characteristics. It could have been any arbitrary function that one may interpret as the trace of some transient solution. Accordingly with \cite{Laframboise}, we assumed these closed characteristics to be unpopulated. From a mathematical point of view, considering the distribution function to be non zero would add some additional terms in the expression of the macroscopic density that we discard for the sake of conciseness of this work.

Then one has for $L \in \RR$ the phase space decomposition
\begin{align*}
(1,r_{b}) \times \RR =  \cD^{b}_i(L) \cup \cD^{pc}_i(L).
\end{align*}
Using this phase space decomposition and the fact that the solutions of the Vlasov equation \eqref{Vlasov-i} are constant on the characteristics, we define
\begin{align}
f_{i}(r,v_r,v_\theta) := \begin{cases}
0 \text{ if } (r,v_r) \in  \cD^{pc}_i(L) \text{ with } L = r v_{\theta}, \\
f_{i}^{b}\left( -\sqrt{v_{r}^2 + 2(U_{L}(r)- U_{L}(r_{b}))}; \frac{rv_{\theta}}{r_{b}}\right) \text{ if } (r,v_{r}) \in \cD^{b}_i(L) \text{ with } L = r v_{\theta}.
\end{cases} \label{def_fi}
\end{align}
In view of the above construction, one has the following:
\begin{proposition}\label{prop:weak_sol} Consider $f_{i}^{b}:\RR^{-}_{*} \times \RR \rightarrow \RR^{+}$ a distribution of velocities for incoming positively charged particles (ions) that is essentially bounded. Therefore $f_{i}$ defined by \eqref{def_fi} is a weak solution of the Vlasov equation in the weak sense given by Definition \ref{def_weak_sol_vlasov}.
\begin{proof}
See the appendix \ref{appendix}.
\end{proof}
\end{proposition}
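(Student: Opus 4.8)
The plan is to verify the weak identity \eqref{eq:weak Vlasov} by exploiting that $f_i$ is, by construction, constant along the characteristics \eqref{char-i-1}, so that the volume integral on its left-hand side collapses to a boundary integral. Write $T$ for the transport operator $T\psi := v_r\partial_r\psi-\frac{v_r v_\theta}{r}\partial_{v_\theta}\psi+\big(\frac{v_\theta^2}{r}-\mu_s\partial_r\phi\big)\partial_{v_r}\psi$ generating \eqref{char-i-1}, so that $T\psi(r(t),v_r(t),v_\theta(t))=\frac{d}{dt}\psi(r(t),v_r(t),v_\theta(t))$ along any characteristic. A direct computation gives the algebraic identity $\Psi=r\,T(\psi/r)$, while the vector field driving \eqref{char-i-1} is divergence-free for the measure $d\mu:=r\,dr\,dv_r\,dv_\theta$, which is therefore invariant under the characteristic flow. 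Combining these two facts, the left-hand side of \eqref{eq:weak Vlasov} rewrites as
\begin{equation*}
    \int_Q f_i\,\Psi\,dr\,dv_r\,dv_\theta=\int_Q f_i\;T(\psi/r)\,d\mu,
\end{equation*}
where the integrand $f_i\,T(\psi/r)$ is, along each orbit, the product of the constant $f_i$ and the total time-derivative $\frac{d}{dt}\big[(\psi/r)\circ\text{flow}\big]$.

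Next I would disintegrate the invariant measure $d\mu$ along the flow. By Liouville's theorem, $d\mu$ factorizes over the union of non-closed orbits as the tensor product of a transverse flux measure carried by an incoming cross-section of $\partial Q$ with the one-dimensional time-measure along each orbit, whereas the region foliated by closed orbits carries no boundary endpoint. Since $f_i$ is constant on each orbit, Fubini together with the fundamental theorem of calculus turns the time-integral of $T(\psi/r)$ on each orbit into the difference of the values of $\psi/r$ at its two endpoints on $\partial Q$. The compact support of $\psi$ in $\overline Q$ disposes of the orbits with infinite transit time $t_{\textnormal{out}}-t_{\textnormal{inc}}$, and the constraint $\psi_{|\Sigma^{\textnormal{out}}}=0$ kills every outgoing endpoint, so that only incoming endpoints remain.

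I would then identify the surviving contributions from the decomposition $(1,r_b)\times\RR=\cD^b_i(L)\cup\cD^{pc}_i(L)$. Orbits issued from $\cD^{pc}_i(L)$ are either closed, hence without boundary endpoint, or enter through $\{1\}\times\RR^{+}_{*}\times\RR$, where $f_i\equiv0$ by \eqref{def_fi}; in both cases they contribute nothing. The only remaining endpoints are those of the orbits in $\cD^b_i(L)$, which enter at $r=r_b$ with radial velocity $v_b=-\sqrt{v_r^2+2(U_L(r)-U_L(r_b))}<0$ and carry the datum $f_i^b$. Pulling the transverse flux measure of $d\mu$ back to the section $\{r=r_b\}$ yields exactly the factor $|v_r|\,r$ (the crossing speed $|\dot r|$ times the transverse weight), and the weight $1/r$ in $\psi/r$ cancels the $r$; together with the sign produced by the fundamental theorem of calculus and with $v_r<0$, the assembled incoming contribution is precisely the right-hand side $\int_{-\infty}^{+\infty}\int_{-\infty}^{0}f_i^b(v_r,v_\theta)\,\psi(r_b,v_r,v_\theta)\,v_r\,dv_r\,dv_\theta$ of \eqref{eq:weak Vlasov}.

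The main obstacle is to make the disintegration and the change to flow-adapted coordinates rigorous when $f_i$ is merely bounded and is discontinuous across the separatrices $\{\tfrac{v_r^2}{2}+U_L(r)=\overline{U_L}\}$. One must check that the set of exceptional points — the grazing set $\{v_r=0\}$ over $r\in\{1,r_b\}$, the turning points where $U_L'$ vanishes, and the separatrix/closed-orbit interface — is $d\mu$-negligible and carries no lost boundary contribution, and that off this set the map $(r,v_r,v_\theta)\mapsto(L,v_b,t)$ is a measure-preserving bijection. The cleanest way to secure this, which I would adopt, is to first prove the identity for incoming data $f_i^b$ that are smooth and compactly supported in $\RR^{-}_{*}\times\RR$ — for which the relevant orbits cross the boundaries transversally and the flow-coordinate change is a genuine diffeomorphism, so that the endpoint computation above is classical — and then to pass to the limit for general essentially bounded $f_i^b$ by dominated convergence, controlling the traces uniformly in $L^1$ and using that the exceptional set is $d\mu$-null.
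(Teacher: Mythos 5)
Your strategy is sound and it is a genuinely different execution of the same underlying idea. The paper never invokes Liouville's theorem or a disintegration of the invariant measure $r\,dr\,dv_r\,dv_\theta$ along the flow: it parametrizes each non-closed orbit by $r$ (with the two branches $\psi^{\pm}$ of \eqref{psi_pm}), uses the explicit change of variables $(v_r,v_\theta)\mapsto(w_r,L)$ onto the incoming data coordinates, and observes the identity \eqref{chain_rule_id} — which is exactly your $\Psi=r\,T(\psi/r)$ read along a characteristic — so that the $r$-integral collapses to boundary terms by the fundamental theorem of calculus. Your flux bookkeeping at $r=r_b$ (the factor $|v_r|\,r$ cancelling against $\psi/r$, the sign from $v_r<0$) is correct and reproduces the right-hand side of \eqref{eq:weak Vlasov}. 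What your abstract route buys is conceptual transparency (the decomposition $\cD^{b}_i(L)\cup\cD^{pc}_i(L)$ is flow-invariant, so orbit-wise FTC can only produce endpoints on $\partial Q$); what the paper's explicit route buys is that the delicate step is isolated into a concrete, checkable statement about the monotone envelope $U_L^{\dag}$ of \eqref{obel_transform}.

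That delicate step is where your proposal has a weakness. The hard part is not the regularity of $f_i^b$: even for $f_i^b$ smooth and compactly supported in $\RR^-_*\times\RR$, the function $f_i$ of \eqref{def_fi} is discontinuous across the separatrix and the interface $\{r=r_i(L,e)\}$, the flow chart degenerates at the turning points, and the transit time blows up on orbits whose energy is a critical value of $U_L$ — all of this is governed by $\phi$ through $U_L$, not by the datum, so mollifying or truncating $f_i^b$ removes only the grazing degeneracy at $\{v_r=0\}$ on the boundary and leaves the interior degeneracies untouched. Likewise, compact support of $\psi$ in $\overline Q$ does \emph{not} dispose of the infinite-transit orbits: those orbits stay in a compact region and limit to an interior equilibrium of $U_L$, where $\psi$ need not vanish, so they must instead be shown to form a $d\mu$-null set. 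You do list the correct facts to verify, but the proposed regularization does not establish them. The paper's substitute for this verification is Proposition \ref{properties_obel} together with Lemma \ref{countability_max}: the moving limit of integration $-\sqrt{2(U_L^{\dag}(r)-U_L(r_b))}$ encodes the reflected/trapped interface, and the Leibniz boundary term it generates vanishes almost everywhere because $(U_L^{\dag})'=\one_{\{U_L^{\dag}=U_L\}}U_L'$ and the set of strict left local maxima is countable. To complete your argument you would need an equivalent statement — e.g.\ a Sard-type argument that the critical values of the $C^{1,1}$ function $U_L$ are Lebesgue-null for each $L$, plus a justification of the co-area factorization of $d\mu$ up to that null set — rather than a density argument in the boundary datum.
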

One can express the macroscopic density explicitly in terms of the effective potential $U_{L}$. This will be of great help for the analysis.
\begin{proposition}\label{prop:n_i formula} Consider $f_{i}^{b}:\RR^{-}_{*} \times \RR\to \RR^{+}$ a distribution of velocities for incoming positively charged particles (ions). With $f_{i}$ defined by \eqref{def_fi} the macroscopic density is given by
\begin{align}
    &rn_{i}(r) = \displaystyle  \int_{-\infty}^{+\infty} \int_{-\infty}^{-\sqrt{2 (\overline{U}_{L} -U_{L}(r_{b}))}} \frac{\vert w_{r} \vert }{\sqrt{w_{r}^2 -2(U_{L}(r)-U_{L}(r_{b}))}} f_{i}^{b}\left( w_{r} , \frac{L}{r_{b}} \right) \,dw_r\,dL \nonumber \\
    &+2 \int_{-\infty}^{+\infty} \mathbbm{1}_{\lbrace U_{L}(r_{b}) - U_{L}(r) < 0 \rbrace} \int_{ \cW_{i,1}^{-}(r,L) } \frac{ \vert w_{r} \vert }{\sqrt{w_{r}^2-2(U_{L}(r)-U_{L}(r_{b}))}} f_{i}^{b} \left( w_{r}; \frac{L}{r_{b}} \right)\,dw_r\,dL\nonumber \\
    &+2 \int_{-\infty}^{+\infty} \mathbbm{1}_{\lbrace U_{L}(r_{b}) - U_{L}(r) \geq 0 \rbrace} \int_{\cW_{i,2}^{-}(r,L)} \frac{ \vert w_{r} \vert }{\sqrt{w_{r}^2-2(U_{L}(r)-U_{L}(r_{b}))}} f_{i}^{b} \left( w_{r}; \frac{L}{r_{b}} \right)\,dw_r\,dL \label{ni}
\end{align}
where
\begin{align*}
    &\cW_{i,1}^{-}(r,L):= \bigg\lbrace w_{r} \in \RR \: : \:  - \sqrt{2 (\overline{U}_{L}-U_{L}(r_{b}))} < w_{r} < -\sqrt{2(U_{L}(r)-U_{L}(r_{b}))} \: \\&\qquad\qquad\qquad\qquad\qquad\qquad\qquad\qquad\qquad\qquad\qquad\qquad\qquad\text{and}\quad r > r_{i}\left( L, \frac{w_{r}^{2}}{2} + U_{L}(r_{b}) \right) \bigg \rbrace,\\
    &\cW_{i,2}^{-}(r,L) := \left \lbrace w_{r} \in \RR \: : \:  - \sqrt{2 (\overline{U}_{L}-U_{L}(r_{b}))}< w_{r} < 0   \: \quad\text{and}\quad r > r_{i}\left( L, \frac{w_{r}^{2}}{2} + U_{L}(r_{b}) \right) \right \rbrace.
\end{align*}
and the radial current density is given by:
\begin{align}
j_{i}(r) = \frac{1}{r} \int_{L = -\infty}^{L = +\infty} \int_{-\infty}^{- \sqrt{2 (\overline{U}_{L}- U_{L}(r_{b}))}} f_{i}^{b}\left( w_{r}; \frac{L}{r_{b}} \right)\, w_{r}\,dw_{r}\, dL. \label{ji}
\end{align}
\end{proposition}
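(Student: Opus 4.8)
The plan is to compute the velocity integrals defining $n_i$ and $j_i$ directly from the explicit formula \eqref{def_fi}, using the two conserved quantities to trivialize the integration. The first step is the change of angular variable $v_\theta \mapsto L = r\,v_\theta$, under which $dv_\theta = dL/r$; this produces the factor $r$ on the left-hand side and recasts the density as
\begin{equation*}
    r\,n_i(r) = \int_{\RR}\int_{\RR} f_i\!\left(r,v_r,\tfrac{L}{r}\right)\,dv_r\,dL .
\end{equation*}
Since $f_i$ vanishes on $\cD^{pc}_i(L)$, for each fixed $L$ the inner integral is supported on $\cD^b_i(L) = \cD^{b,1}_i(L)\cup\cD^{b,2}_i(L)$, where by \eqref{def_fi} the integrand equals $f_i^b\big(w_r,L/r_b\big)$ with incoming radial velocity $w_r = -\sqrt{v_r^2 + 2(U_L(r)-U_L(r_b))}$ fixed by conservation of energy.

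The core step is the substitution $v_r \mapsto w_r$ at fixed $r$ and $L$. Differentiating $w_r^2 = v_r^2 + 2(U_L(r)-U_L(r_b))$ gives $w_r\,dw_r = v_r\,dv_r$, hence the Jacobian weight
\begin{equation*}
    |dv_r| = \frac{|w_r|}{\sqrt{w_r^2 - 2(U_L(r)-U_L(r_b))}}\,|dw_r|,
\end{equation*}
which is exactly the kernel appearing in \eqref{ni}. The only genuine bookkeeping is the multiplicity of this map and the determination of its image. On $\cD^{b,1}_i(L)$, defined in \eqref{d_i_b_1} by $v_r < -\sqrt{2(\overline{U}_L - U_L(r))}$, only negative radial velocities occur, so $v_r\mapsto w_r$ is a bijection onto the range $w_r < -\sqrt{2(\overline{U}_L - U_L(r_b))}$; this yields the first integral, with no combinatorial factor. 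On $\cD^{b,2}_i(L)$, where $U_L(r_b) < \tfrac{v_r^2}{2}+U_L(r) < \overline{U}_L$ and $r>r_i(L,e)$, the trajectory turns around at $r_i(L,e)$, so both signs of $v_r$ are present on the characteristic and share the same $w_r$ and the same integrand value; since the constraints in \eqref{d_i_b_2} depend on $v_r$ only through $v_r^2$, the set is symmetric and summing the two branches produces the factor $2$.

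I would then split $\cD^{b,2}_i(L)$ according to the sign of $U_L(r_b)-U_L(r)$, since this controls the image of the substitution: the identity $v_r^2 = w_r^2 - 2(U_L(r)-U_L(r_b))$ forces $w_r \le -\sqrt{2(U_L(r)-U_L(r_b))}$ when $U_L(r)>U_L(r_b)$ (giving $\cW^-_{i,1}(r,L)$), whereas no lower bound is needed when $U_L(r)\le U_L(r_b)$ (giving $\cW^-_{i,2}(r,L)$); the geometric constraint $r>r_i(L,e)$ translates verbatim into $r > r_i\big(L,\tfrac{w_r^2}{2}+U_L(r_b)\big)$ by \eqref{def:r_i}. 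This produces the second and third integrals and completes the formula for $r\,n_i(r)$. For the current $j_i$ the same substitution applies with the extra weight $v_r$: on $\cD^{b,2}_i(L)$ the incoming and outgoing branches carry opposite signs of $v_r$ and cancel, so only $\cD^{b,1}_i(L)$ survives, and there $v_r\,dv_r = w_r\,dw_r$ removes the square-root kernel and yields \eqref{ji} directly. In particular $r\,j_i(r)$ is independent of $r$, reflecting conservation of current.

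The step I expect to be most delicate is the rigorous justification of the change of variables rather than its formal algebra: one must verify that $v_r\mapsto w_r$ is a piecewise $C^1$ diffeomorphism away from the turning set $\{v_r=0\}$, that this set contributes zero measure, and that the kernel $|w_r|/\sqrt{w_r^2-2(U_L(r)-U_L(r_b))}$ carries only an integrable square-root singularity at the endpoint $w_r^2 = 2(U_L(r)-U_L(r_b))$, so that Fubini's theorem and the substitution are both licit under the assumed integrability of $f_i^b$.
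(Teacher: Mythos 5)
Your proposal is correct and follows essentially the same route as the paper's proof: the change of variable $L=rv_\theta$, the restriction to $\cD^{b,1}_i(L)\cup\cD^{b,2}_i(L)$, the substitution $w_r=-\sqrt{v_r^2+2(U_L(r)-U_L(r_b))}$ with Jacobian $|w_r|/\sqrt{w_r^2-2(U_L(r)-U_L(r_b))}$, the symmetry in $v_r$ producing the factor $2$ on $\cD^{b,2}_i(L)$, and the split according to the sign of $U_L(r_b)-U_L(r)$ to identify $\cW^-_{i,1}$ and $\cW^-_{i,2}$. Your cancellation argument for $j_i$ on the low-energy set is the same symmetry argument the paper invokes (stated there only briefly), so there is nothing to add.
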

Note that we only integrate non-negative quantities so that the manipulated integrals are always well-defined (finite or not).
Assumptions on the distribution $f_i^b$ that make $rn_{i}(r)$ be a finite quantity are discussed in the next section.

\begin{proof}
Let $r \in (1,r_b)$. One has by definition and using Fubini-Tonelli theorem
\begin{align*}
  n_{i}(r) := \int_{\RR^2} f_{i}(r,v_{r},v_{\theta}) \,dv_r\, dv_{\theta} =  \int_{\RR}\int_{\RR} f_{i}\left( r,v_{r}, v_{\theta} \right)\, dv_{\theta}\, dv_{r}.
\end{align*}
Using the change of variable $L = r v_{\theta}$, one has
\begin{align*}
 rn_{i}(r) =  \int_{\RR}\int_{\RR} f_{i}\left( r,v_{r}, \frac{L}{r} \right)\, dL\, dv_{r} = \int_{\RR}\int_{\RR} f_{i}\left( r,v_{r}, \frac{L}{r} \right)\, d v_{r}\, dL. 
\end{align*}
In view of the definition of $f_{i}$ at~\eqref{def_fi}, the macroscopic density only integrates on the two sets
\begin{align*}
&\cD_i^{b,1}(r) := \left \lbrace (v_{r},L) \in \RR^2 \: : \:  v_{r} < -\sqrt{2( \overline{U}_{L} -U_{L}(r))} \right \rbrace,\\
&\cD_i^{b,2}(r) := \bigg \lbrace (v_{r},L) \in \RR^2 \: : \:    U_{L}(r_{b}) - U_{L}(r) < \frac{v_{r}^2}{2} < \overline{U}_{L}-U_{L}(r) \\&\qquad\qquad\qquad\qquad\qquad\qquad\qquad\qquad\qquad\qquad\quad\text{and}\quad \:  r > r_{i}\left( L, \frac{v_{r}^{2}}{2} + U_{L}(r) \right)  \bigg \rbrace
\end{align*}
Using the definition of $f_{i},$ one therefore obtains 
\begin{align*}
   & r n_{i}(r)  = \int_{-\infty}^{ +\infty} \int_{-\infty}^{-\sqrt{2(\overline{U}_{L}-U_{L}(r))}} f_{i}^{b} \left( -\sqrt{v_{r}^2 + 2(U_{L}(r) - U_{L}(r_{b}))}; \frac{L}{r_{b}} \right)\,dv_{r}\, dL\\
   & +\int_{\cD_i^{b,2}(r)} f_{i}^{b} \left( -\sqrt{v_{r}^2 + 2(U_{L}(r) - U_{L}(r_{b}))}; \frac{L}{r_{b}} \right)\,dv_{r}\, dL
\end{align*}
For the foregoing computation, one sets
\begin{align*}
&I_{1} := \int_{-\infty}^{ +\infty} \int_{-\infty}^{-\sqrt{2(\overline{U}_{L}-U_{L}(r))}} f_{i}^{b} \left( -\sqrt{v_{r}^2 + 2(U_{L}(r) - U_{L}(r_{b}))}; \frac{L}{r_{b}} \right)\,dv_{r}\, dL\\
&I_{2} :=\int_{\cD_i^{b,2}(r)} f_{i}^{b} \left( -\sqrt{v_{r}^2 + 2(U_{L}(r) - U_{L}(r_{b}))}; \frac{L}{r_{b}} \right)\,dv_{r}\, dL. \\
\end{align*}
For the first integral $I_{1}$, one uses the change of variable $w_{r} = -\sqrt{v_{r}^2 + 2(U_{L}(r) - U_{L}(r_{b}))}$ so that one gets
\begin{align*}
   & I_{1} =  \displaystyle  \int_{-\infty}^{+\infty} \int_{-\infty}^{-\sqrt{2 (\overline{U}_{L} -U_{L}(r_{b}))}} \frac{\vert w_{r} \vert }{\sqrt{w_{r}^2 -2(U_{L}(r)-U_{L}(r_{b}))}} f_{i}^{b}\left( w_r , \frac{L}{r_{b}} \right)\, dw_{r}\, dL.
\end{align*}
Regarding the definition of the set $\cD_i^{b,2}(r)$, one splits it in two parts according to the sign of $U_{L}(r_{b})-U_{L}(r).$ Consider for $L \in \RR$ being fixed, the two sets of radial velocities
\begin{align*}
    &\cV_{i,1}(r,L) := \left \lbrace v_{r} \in \RR \: : \:  \vert v_{r} \vert < \sqrt{2 (\overline{U}_{L}-U_{L}(r))} \: \quad\text{and}\quad r > r_{i}\left( L, \frac{v_{r}^{2}}{2} + U_{L}(r) \right) \right \rbrace,\\
    &\cV_{i,2}(r,L) := \bigg \lbrace v_{r} \in \RR \: : \:  \sqrt{2 (U_{L}(r_{b}) - U_{L}(r))} < \vert v_{r} \vert < \sqrt{2 (\overline{U}_{L}-U_{L}(r))} \: \\&\qquad\qquad\qquad\qquad\qquad\qquad\qquad\qquad\qquad\qquad\qquad\qquad\quad\text{and}\quad r > r_{i}\left( L, \frac{v_{r}^{2}}{2} + U_{L}(r) \right) \bigg \rbrace.
\end{align*}
One therefore splits the second integral $I_{2}$ into $I_{2} = I_{2,1} + I_{2,2}$ with
\begin{align*}
&I_{2,1} := \int_{-\infty}^{+\infty} \mathbbm{1}_{\lbrace U_{L}(r_{b}) - U_{L}(r) < 0 \rbrace} \int_{ \cV_{i,1}(r,L) } f_{i}^{b} \left( -\sqrt{v_{r}^2 + 2(U_{L}(r) - U_{L}(r_{b}))}; \frac{L}{r_{b}} \right)\,dv_{r}\, dL,\\
&I_{2,2} := \int_{-\infty}^{+\infty} \mathbbm{1}_{\lbrace U_{L}(r_{b}) - U_{L}(r) \geq 0 \rbrace} \int_{ \cV_{i,2}(r,L) } f_{i}^{b} \left( -\sqrt{v_{r}^2 + 2(U_{L}(r) - U_{L}(r_{b}))}; \frac{L}{r_{b}} \right)\,dv_{r}\, dL.
\end{align*}
One now computes the first integral $I_{2,1}.$ One remarks that the set $\cV_{1}(r,L)$ is symmetric with respect to $v_{r} = 0$ and that the integrand also is. By symmetry one therefore has 
\begin{align*}
    I_{2,1} = 2\int_{-\infty}^{+\infty} \mathbbm{1}_{\lbrace U_{L}(r_{b}) - U_{L}(r) < 0 \rbrace} \int_{ \cV_{i,1}^{-}(r,L) } f_{i}^{b} \left( -\sqrt{v_{r}^2 + 2(U_{L}(r) - U_{L}(r_{b}))}; \frac{L}{r_{b}} \right)\,dv_r\,dL\\
\end{align*}
where one now considers only the negative velocities: 
\begin{align*}
    \cV_{i,1}^{-}(r,L):= \left \lbrace v_{r} \in \RR \: : \:  - \sqrt{2 (\overline{U}_{L}-U_{L}(r))} < v_{r} < 0 \: \quad\text{and}\quad r > r_{i}\left( L, \frac{v_{r}^{2}}{2} + U_{L}(r) \right) \right \rbrace.
\end{align*}
Using the change of  variable $w_{r} = -\sqrt{v_{r}^2 + 2(U_{L}(r)-U_{L}(r_{b}))}$, one gets
\begin{align*}
    &I_{2,1} = 2 \int_{-\infty}^{+\infty} \mathbbm{1}_{\lbrace U_{L}(r_{b}) - U_{L}(r) < 0 \rbrace} \int_{ \cW_{i,1}^{-}(r,L) } \frac{ \vert w_{r} \vert }{\sqrt{w_{r}^2-2(U_{L}(r)-U_{L}(r_{b}))}} f_{i}^{b} \left( w_{r}; \frac{L}{r_{b}} \right)\,dw_r\,dL\\
    \end{align*}
where the set $\cW_{i,1}^{-}(r,L)$ is the image of $\cV_{i,1}^{-}(r,L)$ by the change of variable $v_{r} \mapsto w_{r}$, namely:
    \begin{align*}
    &\cW_{i,1}^{-}(r,L):=\bigg\lbrace w_{r} \in \RR \: : \:  - \sqrt{2 (\overline{U}_{L}-U_{L}(r_{b}))} < w_{r} < -\sqrt{2(U_{L}(r)-U_{L}(r_{b}))} \: \\&\qquad\qquad\qquad\qquad\qquad\qquad\qquad\qquad\qquad\qquad\qquad\qquad\text{and}\quad r > r_{i}\left( L, \frac{w_{r}^{2}}{2} + U_{L}(r_{b}) \right) \bigg \rbrace.
\end{align*}
One now computes the second integral $I_{2,2}.$
The set $\cV_{i,2}(r,L)$ is decomposed as $\cV_{i,2}(r,L) := \cV_{i,2}^{+}(r,L) \cup \cV_{i,2}^{-}(r,L)$
where
\begin{align*}
    &\cV_{i,2}^{+}(r,L) = \bigg\lbrace v_{r} \in \RR \: : \:  \sqrt{2 (U_{L}(r_{b}) - U_{L}(r)} < v_{r}< \sqrt{2 (\overline{U}_{L}-U_{L}(r))} \: \\&\qquad\qquad\qquad\qquad\qquad\qquad\qquad\qquad\qquad\qquad\qquad\qquad\text{and}\quad r > r_{i}\left( L, \frac{v_{r}^{2}}{2} + U_{L}(r) \right) \bigg \rbrace,\\
    &\cV_{i,2}^{-}(r,L) = \bigg\lbrace v_{r} \in \RR \: : \:  - \sqrt{2 (\overline{U}_{L}-U_{L}(r))}< v_{r} < -  \sqrt{2 (U_{L}(r_{b}) - U_{L}(r))}   \: \\&\qquad\qquad\qquad\qquad\qquad\qquad\qquad\qquad\qquad\qquad\qquad\qquad\text{and}\quad r > r_{i}\left( L, \frac{v_{r}^{2}}{2} + U_{L}(r) \right) \bigg\rbrace.
\end{align*}
This yields the following splitting of the integral $I_{2,2},$
\begin{align*}
    &I_{2,2} = \int_{-\infty}^{+\infty} \mathbbm{1}_{\lbrace U_{L}(r_{b}) - U_{L}(r) \geq 0 \rbrace} \int_{ \cV_{i,2}^{+}(r,L) } f_{i}^{b} \left( -\sqrt{v_{r}^2 + 2(U_{L}(r) - U_{L}(r_{b}))}; \frac{L}{r_{b}} \right)\,dv_r\,dL\\
    &+\int_{-\infty}^{+\infty} \mathbbm{1}_{\lbrace U_{L}(r_{b}) - U_{L}(r) \geq 0 \rbrace} \int_{ \cV_{i,2}^{-}(r,L) } f_{i}^{b} \left( -\sqrt{v_{r}^2 + 2(U_{L}(r) - U_{L}(r_{b}))}; \frac{L}{r_{b}} \right)\,dv_r\,dL.
\end{align*}
For each integral, one uses again the change of variable $w_{r} = -\sqrt{v_{r}^2 + 2(U_{L}(r)-U_{L}(r_{b}))}$ so that one eventually obtains 
\begin{align*}
    I_{2,2} = 2 \int_{-\infty}^{+\infty} \mathbbm{1}_{\lbrace U_{L}(r_{b}) - U_{L}(r) \geq 0 \rbrace} \int_{\cW_{i,2}^{-}(r,L)} \frac{ \vert w_{r} \vert }{\sqrt{w_{r}^2-2(U_{L}(r)-U_{L}(r_{b}))}} f_{i}^{b} \left( w_{r}; \frac{L}{r_{b}} \right)\,dw_r\,dL
\end{align*}
where the set $\cW_{i,2}^{-}(r,L)$ is the image of the set $\cV_{i,2}^{-}(r,L)$ by the change of variable $v_{r} \mapsto w_{r},$ namely:
\begin{align*}
&\cW_{i,2}^{-}(r,L) = \left \lbrace w_{r} \in \RR \: : \:  - \sqrt{2 (\overline{U}_{L}-U_{L}(r_{b}))}< w_{r} < 0   \: \quad\text{and}\quad r > r_{i}\left( L, \frac{w_{r}^{2}}{2} + U_{L}(r_{b}) \right) \right \rbrace.
\end{align*}

Gathering all the integrals together yields the expression of the macroscopic density \eqref{ni}.
For the current density, using a similar decomposition of the integral and symmetry arguments one is led to the expression \eqref{ji}.
\end{proof}

\begin{remark} In the expression of the macroscopic density \eqref{ni}, the first integral correspond the density carried by characteristics that travel from $r = r_{b}$ to $r = 1.$ These characteristics also carry some current density. The other integrals correspond to a density carried by characteristics that start from $r= r_{b}$ and go back to $r=r_{b}$ because they correspond to low energy levels. Particles on these characteristics do not have enough energy to overcome the global potential barrier $\overline{U_{L}}.$ On these characteristics there is no current. This eventually explains why the current density \eqref{ji} has only one contribution.
\end{remark}

%%%%%%%%%%%%%%%%%%%%%%%%%%%%%%%%%%%%%%%%%%%%%%%%%%%%%%%%%%%%%%%%
%%%%%%%%%%%%%%%%%%%%%%%%%%%%%%%%%%%%%%%%%%%%%%%%%%%%%%%%%%%%%%%%
\subsection{Eletronic phase diagram}
Concerning the electronic phase diagram, the reasoning is similar as for the ionic phase diagram except that, since the electronic charge is now negative, the particles interact with the external electric field with an opposite sign.
In other words, $d\phi/dr$ is replaced by $-d\phi/dr$.
We make use of this analogy to simplify the presentation of the electronic phase diagram.

The characteristics associated with the Vlasov equation \eqref{Vlasov-e} are the solutions to the ordinary differential equations
\begin{equation} \label{char-e}
\begin{cases}
\displaystyle\frac{d}{dt}r(t) = v_{r}(t),\\
\displaystyle\frac{d}{dt}v_{r}(t) = \frac{v_{\theta}(t)^2}{r(t)} + \frac{d\phi}{dr}(r(t)),\\
\displaystyle\frac{d}{dt}v_{\theta}(t) = \frac{-v_{r}(t) v_{\theta}(t)}{r(t)}.
\end{cases}
\end{equation}
Since $d\phi/dr$ is Lipschitz continuous, for each initial condition $(r_{0},v_{r,0},v_{\theta,0}) \in (1,r_{b}) \times \RR^2$ there exists a unique solution $(r,v_r,v_\theta) \in C^{1}( \left(t_{\textnormal{inc}}(r_0,v_{r,0},v_{\theta,0}), t_{\textnormal{out}}(r_0,v_{r,0},v_{\theta,0})\right); $ $[1,r_{b}] \times \RR^2)$ to Equation~\eqref{char-e}, where 
\begin{align*}
&t_{\textnormal{inc}}(r_0,v_{r,0},v_{\theta,0})= \inf \lbrace t' \leq 0 \: : \: r(t) \in (1,r_{b})\: \:  \forall t \in (t',0) \rbrace, \\ &t_{\textnormal{out}}(r_0,v_{r,0},v_{\theta,0})= \sup  \lbrace t' \geq 0 \: : \: r(t) \in (1,r_{b}) \: \:  \forall t \in (0,t') \rbrace
\end{align*}
denote respectively the incoming time and the outgoing time of the characteristics in the interval $(1,r_{b})$. 
They are finite or infinite.
One has two constants of motion:  the total energy and the angular momentum. 
Indeed, the characteristics satisfy for all $t \in (t_{\textnormal{inc}}(r_0,v_{r,0},v_{\theta,0}), $ $ t_{\textnormal{out}}(r_0,v_{r,0},v_{\theta,0})),$
\begin{align*}
&\frac{d}{dt} \left(  \frac{v_{r}^{2}(t)+ v_{\theta}^2(t)}{2} - \phi(r(t)) \right)= 0, \\
&\frac{d}{dt} \left( r(t)v_{\theta}(t) \right) = 0.
\end{align*}
Therefore the characteristics are contained in the following level sets defined for $L \in \RR$ and $e \in \RR$ by
\begin{align*}
\mathcal{C}_{L,e} := \Big\lbrace (r,v_{r},v_{\theta}) \in (1,r_{b}) \times \RR^2 \: : \:  r v_{\theta} = L \text{ and } \frac{v_r^{2} + v_{\theta}^2}{2} - \phi(r) = e \Big\rbrace.
\end{align*}
These sets are used to describe the phase space according to the values of $L$ and $e.$ In this regard, it is convenient to introduce for $L \in \RR$ the effective potential defined by
\begin{align*}
\forall r \in [1,r_{b}], \qquad V_{L}(r) = \frac{L^2}{2r^2} - \phi(r). 
\end{align*}
The continuity of $V_{L}$ follows from the continuity of $\phi$. The function $V_{L}$ therefore reaches reaches its maximum at some point in $[1,r_{b}]$. It maximum value is denoted
\begin{align*}
     \overline{V_L} := \underset{ r \in [1,r_{b}]} \max V_{L}(r)
\end{align*}
Similarly as for the ions, it defines a potential barrier and without further monotony assumption on $V_{L}$, it may exists several local potential barrier. To construct a weak solution, we shall thus carefully decompose the phase $(r,v_{r})$ for each $L \in \RR.$ Namely, we shall distinguish between characteristics that intersect the boundaries from those who do not and correspond to trapping sets. The construction is analogous to the previous one for the ions. We refer the reader to the previous section for the details.
We define
\begin{align}\label{def:r_e}
    r_{e}(L,e) := \min \lbrace a \in [1,r_{b}] \: : \: V_{L}(s) \leq e \:, \forall s \in [a,r_{b}] \rbrace.
\end{align}
and consider the following sets
\begin{align*}
&\cD^{b}_e(L) := \cD^{b,1}(L) \cup \cD^{b,2}(L),\\
&    \cD^{b,1}_e(L) := \left \lbrace (r,v_{r}) \in (1,r_{b}) \times \RR \: : \: v_{r} < -\sqrt{2(\overline{V_L}-V_{L}(r))} \right \rbrace,\\
&    \cD^{b,2}_e(L) :=  \bigg\lbrace (r,v_{r}) \in (1,r_{b}) \times \RR \: : V_{L}(r_{b}) < \underbrace{\frac{v_{r}^2}{2} + V_{L}(r)}_{=:e} < \overline{V_{L}} \:  , \: r > r_{e}(L,e) \bigg \rbrace,\\
&\cD^{pc}_e(L) := (0,1) \times \RR \setminus \cD_e^{b}(L).
\end{align*}
One has the following decomposition:
\begin{align*}
(1,r_{b}) \times \RR = \cD_e^{pc}(L) \cup \cD_e^{b}(L) 
\end{align*}
The domain $\cD^{b}_e(L)$ corresponds to characteristics that originate from the boundary $r = r_{b}.$
The domain $\cD^{pc}_e(L)$ corresponds to characteristics curves that either originates from the probe or are closed and do not intersect the boundaries. 
Using this phase space decomposition and the fact that the solutions of the Vlasov equation \eqref{Vlasov-i} are constant on the characteristics, we define
\begin{align}
f_{e}(r,v_r,v_\theta) := \begin{cases}
0 \text{ if } (r,v_r) \in \cD_e^{pc}(L) \text{ with } L = r v_{\theta}, \\
f_{e}^{b}\left( -\sqrt{v_{r}^2 + 2(V_{L}(r)- V_{L}(r_{b}))}; \frac{rv_{\theta}}{r_{b}}\right) \text{ if } (r,v_{r}) \in \cD_e^{b}(L) \text{ with } L = r v_{\theta}.
\end{cases} \label{def_fe}
\end{align}
Following the same reasoning as for the ions one has,

\begin{proposition} Consider $f_{e}^{b}:\RR^{-}_{*} \times \RR \rightarrow \RR^{+}$ a distribution of velocities for incoming negatively charged particles (electrons) that is essentially bounded. The function $f_{e}$ defined by \eqref{def_fe} is a weak solution of the Vlasov equation in the sense of Definition \ref{def_weak_sol_vlasov}.
\end{proposition}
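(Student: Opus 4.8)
The plan is to deduce this proposition directly from its ionic counterpart, Proposition \ref{prop:weak_sol}, by observing that the electronic problem is nothing but the ionic problem under the substitution $\phi\mapsto\tilde\phi:=-\phi$. Comparing the electronic characteristic system \eqref{char-e} with the ionic one \eqref{char-i-1}, the only difference is the sign of the force term $d\phi/dr$; equivalently, the electronic Vlasov equation \eqref{Vlasov-e} is exactly the ionic equation \eqref{Vlasov-i} with $\phi$ replaced by $\tilde\phi$. This is mirrored at the level of the conserved effective potential, since $V_L(r)=L^2/(2r^2)-\phi(r)$ is precisely $U_L(r)$ from \eqref{U_L} evaluated at $\tilde\phi=-\phi$. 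Because $\phi\in W^{2,\infty}(1,r_b)$ we also have $\tilde\phi\in W^{2,\infty}(1,r_b)$, so the regularity hypothesis ensuring existence and uniqueness of the characteristics is preserved and Proposition \ref{prop:weak_sol} is applicable with $\tilde\phi$ in place of $\phi$.

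Next I would verify that every ingredient entering the construction \eqref{def_fe} of $f_e$ is the image under $\phi\mapsto-\phi$ of the corresponding ingredient in the construction \eqref{def_fi} of $f_i$. Concretely, the global barrier $\overline{V_L}$ corresponds to $\overline{U_L}$, the threshold radius $r_e(L,e)$ of \eqref{def:r_e} corresponds to $r_i(L,e)$ of \eqref{def:r_i}, and the phase-space regions $\cD^{b,1}_e(L)$, $\cD^{b,2}_e(L)$, $\cD^{pc}_e(L)$ correspond respectively to $\cD^{b,1}_i(L)$, $\cD^{b,2}_i(L)$, $\cD^{pc}_i(L)$. Thus the two-branch formula \eqref{def_fe} is literally the formula \eqref{def_fi} after the substitution $U_L\mapsto V_L$ and $f_i^b\mapsto f_e^b$. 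It then remains to reconcile the two weak formulations: in Definition \ref{def_weak_sol_vlasov} the species enters only through the coefficient $\mu_s$ multiplying $\partial_r\phi$ inside the test operator $\Psi$, with $\mu_i=1$ producing the factor $-\partial_r\phi$ and $\mu_e=-1$ producing $+\partial_r\phi$. A direct computation shows that the electronic operator $\Psi$ (with potential $\phi$, $\mu_e=-1$) coincides with the ionic operator $\Psi$ evaluated at $\tilde\phi=-\phi$ (with $\mu_i=1$), while the boundary contribution on the right-hand side of \eqref{eq:weak Vlasov} is unchanged, since for both species the only incoming data sits at $r=r_b$ on $\RR^-_*\times\RR$, the probe at $r=1$ being non-emitting.

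With these identifications in hand, applying Proposition \ref{prop:weak_sol} to the essentially bounded incoming data $f_e^b$ and the potential $\tilde\phi=-\phi$ shows that $f_e$ defined by \eqref{def_fe} is a weak solution of \eqref{Vlasov-e} in the sense of Definition \ref{def_weak_sol_vlasov}. The argument carries no genuine analytic obstacle beyond what is already treated in the appendix proof of Proposition \ref{prop:weak_sol}: it is essentially a symmetry reduction. The one point that demands care is the sign accounting in the weak formulation, namely checking that the $\mu_s$ convention makes the electronic test operator $\Psi$ agree with the ionic one under $\phi\mapsto-\phi$, so that no spurious sign survives in the duality identity \eqref{eq:weak Vlasov}.
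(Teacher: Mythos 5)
Your proposal is correct and follows essentially the same route as the paper, which proves the ionic case in detail (Proposition \ref{prop:weak_sol} and its appendix proof) and then disposes of the electronic case by the substitution $\phi\mapsto-\phi$, stating only ``following the same reasoning as for the ions.'' Your write-up merely makes explicit the sign bookkeeping (the $\mu_s$ convention in $\Psi$, the correspondence $V_L=U_L[-\phi]$, and the invariance of the boundary term) that the paper leaves implicit.
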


\begin{proposition}\label{prop:n_e formula} Consider $f_{e}^{b}:\RR^{-}_{*} \times \RR\to \RR^{+}$ a distribution of velocities for incoming negatively charged particles (electrons). With $f_{e}$ defined by \eqref{def_fe} the macroscopic density is given by
\begin{align}
    &rn_{e}(r) = \displaystyle  \int_{-\infty}^{+\infty} \int_{-\infty}^{-\sqrt{2 (\overline{V_L} -V_{L}(r_{b}))}} \frac{\vert w_{r} \vert }{\sqrt{w_{r}^2 -2(V_{L}(r)-V_{L}(r_{b}))}} f_{e}^{b}\left( w , \frac{L}{r_{b}} \right) \,dw_r\,dL \nonumber \\
    &+2 \int_{-\infty}^{+\infty} \mathbbm{1}_{\lbrace V_{L}(r_{b}) - V_{L}(r) < 0 \rbrace} \int_{ \cW_{e,1}^{-}(r,L) } \frac{ \vert w_{r} \vert }{\sqrt{w_{r}^2-2(V_{L}(r)-V_{L}(r_{b}))}} f_{e}^{b} \left( w_{r}; \frac{L}{r_{b}} \right)\,dw_r\,dL\nonumber \\
    &+2 \int_{-\infty}^{+\infty} \mathbbm{1}_{\lbrace V_{L}(r_{b}) - V_{L}(r) \geq 0 \rbrace} \int_{\cW_{e,2}^{-}(r,L)} \frac{ \vert w_{r} \vert }{\sqrt{w_{r}^2-2(V_{L}(r)-V_{L}(r_{b}))}} f_{e}^{b} \left( w_{r}; \frac{L}{r_{b}} \right)\,dw_r\,dL \label{ne}
\end{align}
where
\begin{align*}
    &\cW_{e,1}^{-}(r,L):=:= \bigg \lbrace w_{r} \in \RR \: : \:  - \sqrt{2 (\overline{V_L}-V_{L}(r_{b}))} < w_{r} < -\sqrt{2(V_{L}(r)-V_{L}(r_{b}))} \: \\&\qquad\qquad\qquad\qquad\qquad\qquad\qquad\qquad\qquad\qquad\qquad\qquad\text{and}\quad r > r_{e}\left( L, \frac{w_{r}^{2}}{2} + V_{L}(r_{b}) \right) \bigg \rbrace,\\
    &\cW_{e,2}^{-}(r,L) = \bigg\lbrace w_{r} \in \RR \: : \:  - \sqrt{2 (\overline{V_L}-V_{L}(r_{b}))}< w_{r} < 0   \: \quad\text{and}\quad r > r_{e}\left( L, \frac{w_{r}^{2}}{2} + V_{L}(r_{b}) \right) \bigg\rbrace.
\end{align*}
and the radial current density is given by:
\begin{align*}
j_{e}(r) = \frac{1}{r \sqrt{\mu}} \int_{L = -\infty}^{L = +\infty} \int_{-\infty}^{- \sqrt{2 (\overline{V_L}- V_{L}(r_{b}))}} f_{e}^{b}\left( w_{r}; \frac{L}{r_{b}} \right) w_{r}\,dw_r\,dL.
\end{align*}
\end{proposition}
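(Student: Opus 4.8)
The plan is to repeat \emph{verbatim} the computation carried out in the proof of Proposition~\ref{prop:n_i formula}, since the electronic construction is obtained from the ionic one by the single substitution $\phi \mapsto -\phi$, equivalently $U_L \mapsto V_L$ and $r_i \mapsto r_e$. Concretely, the definition \eqref{def_fe} of $f_e$ has exactly the same structure as the ionic definition \eqref{def_fi}, with $V_L$ replacing $U_L$ everywhere, so every step of the ionic argument transfers unchanged and it suffices to track the substitution.

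First I would start from $n_e(r) = \int_{\RR^2} f_e(r,v_r,v_\theta)\,dv_r\,dv_\theta$, apply Fubini--Tonelli (legitimate because the integrand is non-negative), and perform the change of variable $L = r v_\theta$ to pass to $r n_e(r) = \int_\RR\int_\RR f_e(r,v_r,L/r)\,dv_r\,dL$. In view of \eqref{def_fe}, the integrand vanishes outside the two sets $\cD_e^{b,1}(r)$ and $\cD_e^{b,2}(r)$ (the analogues of $\cD_i^{b,1}(r),\cD_i^{b,2}(r)$ with $V_L$ in place of $U_L$ and $r_e$ in place of $r_i$), which reduces the integral to a contribution $I_1$ over $\cD_e^{b,1}(r)$ and a contribution $I_2$ over $\cD_e^{b,2}(r)$.

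Next I would treat each piece exactly as in the ionic case. For $I_1$ the change of variable $w_r = -\sqrt{v_r^2 + 2(V_L(r) - V_L(r_b))}$ produces the Jacobian factor $|w_r|/\sqrt{w_r^2 - 2(V_L(r)-V_L(r_b))}$ and yields the first line of \eqref{ne}. For $I_2$ I would split according to the sign of $V_L(r_b) - V_L(r)$ into $I_{2,1}$ and $I_{2,2}$, exploit the symmetry $v_r \mapsto -v_r$ of both the domain and the integrand to restrict to negative radial velocities (introducing the factor $2$), and then apply the same change of variable to obtain the two remaining lines of \eqref{ne}, with the velocity sets $\cW_{e,1}^-(r,L)$ and $\cW_{e,2}^-(r,L)$ arising as images, under this change of variable, of the electronic analogues of $\cV_{i,1}^-(r,L)$ and $\cV_{i,2}^-(r,L)$. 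For the current density I would run the identical decomposition; the low-energy (return) characteristics contribute no net radial current by the $v_r \mapsto -v_r$ symmetry, so only the contribution of $\cD_e^{b,1}(r)$ survives and gives the single-integral formula for $j_e$.

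The only genuinely new point, and the one to keep track of, is the extra factor $1/\sqrt{\mu}$ appearing in $j_e$. This is not produced by the phase-space computation but comes directly from the definition of the dimensionless electronic current, $j_e(r) = \frac{1}{\sqrt{\mu}}\int_{\RR^2} f_e\, v_r\,dv_r\,dv_\theta$, which reflects the difference of mobility between the two species; combined with the $1/r$ from the change of variable $L = r v_\theta$ it produces the prefactor $1/(r\sqrt{\mu})$. Beyond bookkeeping this factor through, no step differs from the ionic argument, so I expect no real obstacle here — the content of the statement is that the ionic computation is stable under $\phi \mapsto -\phi$.
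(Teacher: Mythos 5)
Your proposal is correct and matches the paper exactly: the paper proves the ionic formula in detail and then simply states ``Following the same reasoning as for the ions'' for the electronic case, which is precisely the substitution $\phi\mapsto-\phi$, $U_L\mapsto V_L$, $r_i\mapsto r_e$ that you carry out. Your remark that the $1/\sqrt{\mu}$ prefactor comes from the definition of the dimensionless electronic current rather than from the phase-space computation is also the right bookkeeping.
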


%%%%%%%%%%%%%%%%%%%%%%%%%%%%%%%%%%%%%%%%%%%%%%%%%%%%%%%%%%%%%%%%%
%%%%%%%%%%%%%%%%%%%%%%%%%%%%%%%%%%%%%%%%%%%%%%%%%%%%%%%%%%%%%%%%
%%%%%%%%%%%%%%%%%%%%%%%%%%%%%%%%%%%%%%%%%%%%%%%%%%%%%%%%%%%%%%%%
\section{Reformulation of the non linear Poisson equation}
In this section, we consider $f_{e}^{b}:\RR^{-}_{*} \times \RR \rightarrow \RR^{+}$ a distribution of velocities for incoming positively charged particles (ions) and $f_{e}^{b}:\RR^{-}_{*} \times \RR \rightarrow \RR^{+}$ a distribution of velocities for incoming negatively charged particles (electrons). 
It is natural to be interested in hypothesis on these incoming fluxes so that the quantities $n_i(r)$ and $n_e(r)$ defined respectively at~\eqref{ni} and~\eqref{ne} are finite so that their difference make sense.
Nevertheless, we delay this study to the next section. 
We first need to state the Poisson problem associated to the Vlasov equations of the Langmuir probe and give a satisfactory reformulation of the problem.
Recall that we are interested in solutions $\phi \in W^{2,\infty}(1,r_{b})$ to:
\begin{equation}\label{eq:Poisson studied}
\begin{cases}
\displaystyle-\frac{d}{dr}\bigg(r \,\frac{d\phi}{dr}\bigg)(r) = r (n_{i}-n_{e})(r),\\
\phi(1) = \phi_{p} \quad \phi(r_{b}) = 0,
\end{cases}
\end{equation}
where $n_{i}$ is given by \eqref{ni} (Proposition~\ref{prop:n_i formula}) and $n_{e}$ is given by \eqref{ne} (Proposition~\ref{prop:n_e formula}). The main difficulty to obtain existence of solutions lays in the presence of non-local terms in the definition of the right-hand side of~\eqref{eq:Poisson studied}. The idea is to reformulate the problem and to replace the non-local terms by parameters. In the next section, we prove a general existence result whatever value the parameters have. Secondly, we make a good choice for these parameters so that we get back to the original equation.

To ease the reading, the variable of integration $w_r$ will now be simply noted $w$ since it is now understood that we fully concentrate on the radial behavior.

%%%%%%%%%%%%%%%%%%%%%%%%%%%%%%%%%%%%%%%%%%%%%%%%%%%%%%%%%%%%%%%%
%%%%%%%%%%%%%%%%%%%%%%%%%%%%%%%%%%%%%%%%%%%%%%%%%%%%%%%%%%%%%%%%
\subsection{Reformulation of the problem}

%%%%%%%%%%%%%%%%%%%%%%%%%%%%%%%%%%%%%%%%%%%%%%%%%%%%%%%%%%%%%%%%
\subsubsection{A first reformulation}

To deal with the problem raised by the presence of non-local terms (with respect to $\phi$) in the formulation of $n_i$ and $n_e$, we proceed first to a reformulation of the problem. This involves the replacement of the non-locality by parameters that are adjusted later on.
To this purpose, we first define, for any measurable function $\psi$ defined on $[1,r_b]$, the function $\widetilde{\rho}[\psi]:\RR\to[1,r_b]$ by the following formula:
\begin{equation}\label{def:tilde rho}
    \widetilde{\rho}[\psi](e):=\inf \big\{a\in[1,r_b]\,:\,\text{ for a.e } \,s\in[a,r_b],\;\psi(s)\leq e\big\}.
\end{equation}
It is direct from the definitions~\eqref{def:r_i} and~\eqref{def:r_e} to check that
\begin{equation*}
    r_i(L,e)=\widetilde{\rho}[U_L](e),\qquad\text{and}\qquad r_e(L,e)=\widetilde{\rho}[V_L](e).
\end{equation*}
The function $\widetilde{\rho}$ can be understood as a generalization of $r_i(L,e)$ and $r_e(L,e)$. It will be studied for itself later on to make use of its properties.
It is possible to rewrite the quantity $rn_i(r)$ obtained at~\eqref{ni} as follows:
\begin{align}
    &rn_{i}(r) := \displaystyle  \int_{-\infty}^{+\infty} \int_{-\infty}^{-\sqrt{2 (\overline{U_L} -U_{L}(r_{b}))}} \frac{\vert w  \vert }{\sqrt{w ^2 -2(U_{L}(r)-U_{L}(r_{b}))}} f_{i}^{b}\left( w  , \frac{L}{r_{b}} \right) dw\, dL \nonumber \\
    &+2 \int_{-\infty}^{+\infty} \mathbbm{1}_{\lbrace U_{L}(r_{b}) - U_{L}(r) < 0 \rbrace} \int_{-\sqrt{2(\overline{U_L}-U_L(r_b))}}^{ -\sqrt{2(U_L(r)-U_L(r_b))}} \frac{ \vert w  \vert\,f_i^b(w;L/r_b)}{\sqrt{w ^2-2(U_{L}(r)-U_{L}(r_{b}))}}\, \mathbbm{1}_{r \geq \widetilde{\rho}[U_L]\big(\frac{w ^{2}}{2} + U_{L}(r_{b})\big)}\,dw\, dL\nonumber \\
    &+2 \int_{-\infty}^{+\infty} \mathbbm{1}_{\lbrace U_{L}(r_{b}) - U_{L}(r) \geq 0 \rbrace} \int_{-\sqrt{2(\overline{U_L}-U_L(r_b))}}^0 \frac{ \vert w  \vert\,f_i^b(w;L/r_b)}{\sqrt{w ^2-2(U_{L}(r)-U_{L}(r_{b}))}}\, \mathbbm{1}_{r \geq \widetilde{\rho}[U_L]\big(\frac{w ^{2}}{2} + U_{L}(r_{b})\big)}\,dw\, dL.\label{def:n_i}
\end{align}
Similarly, the quantity $rn_e(r)$ obtained at~\eqref{ne} rewrites:
\begin{align}
    &rn_{e}(r) := \displaystyle  \int_{-\infty}^{+\infty} \int_{-\infty}^{-\sqrt{2 (\overline{V_L} -V_{L}(r_{b}))}} \frac{\vert w  \vert }{\sqrt{w ^2 -2(V_{L}(r)-V_{L}(r_{b}))}} f_{i}^{b}\left( w  , \frac{L}{r_{b}} \right) dw\, dL \nonumber \\
    &+2 \int_{-\infty}^{+\infty} \mathbbm{1}_{\lbrace V_{L}(r_{b}) - V_{L}(r) < 0 \rbrace} \int_{-\sqrt{2(\overline{V_L}-V_L(r_b))}}^{ -\sqrt{2(V_L(r)-V_L(r_b))}} \frac{ \vert w  \vert\,f_i^b(w;L/r_b)}{\sqrt{w ^2-2(V_{L}(r)-V_{L}(r_{b}))}}\, \mathbbm{1}_{r \geq \widetilde{\rho}[V_L]\big(\frac{w ^{2}}{2} + V_{L}(r_{b})\big)}\,dw\, dL\nonumber \\
    &+2 \int_{-\infty}^{+\infty} \mathbbm{1}_{\lbrace V_{L}(r_{b}) - V_{L}(r) \geq 0 \rbrace} \int_{-\sqrt{2(\overline{V_L}-V_L(r_b))}}^0 \frac{ \vert w  \vert\,f_i^b(w;L/r_b)}{\sqrt{w ^2-2(V_{L}(r)-V_{L}(r_{b}))}}\, \mathbbm{1}_{r \geq \widetilde{\rho}[V_L]\big(\frac{w ^{2}}{2} + V_{L}(r_{b})\big)}\,dw\, dL.\label{def:n_e}
\end{align}

%%%%%%%%%%%%%%%%%%%%%%%%%%%%%%%%%%%%%%%%%%%%%%%%%%%%%%%%%%%%%%%%
\subsubsection{The non-linear term}
To have a formulation that is shorter and easier to manipulate, we introduce the function
\begin{equation}\label{def:beta}
\begin{array}{cccc}
     \beta: & \RR\times[1,r_b]\times\RR &\longrightarrow & \RR\vspace{0.1cm}\\
     \,& (\nu,r,L)& \longmapsto & \displaystyle 2\nu+L^2\bigg(\frac{1}{r^2}-\frac{1}{r_b^2}\bigg).
\end{array}
\end{equation}
We now recall the definition of the positive part of a number $x\in\RR$ with is $(x)_+:=\max\{x,0\}$ and the negative part $(x)_-:=\max\{-x,0\}$.
We then use the function $\beta$ above~\eqref{def:beta} to define:
\begin{equation}\label{def:Gamma}
\begin{array}{cccc}
\Gamma: &\RR\times[1,r_b]\times\RR\times\RR& \longrightarrow&\RR\vspace{0.1cm}\\
\,&(\nu,r,w,L)&\longmapsto&\left\{\begin{array}{cl} \displaystyle\frac{(w)_-}{\sqrt{w^2-\beta(\nu,r,L)\,}}&\quad\text{if }w^2>\beta(\nu,r,L),\vspace{0.1cm}\\
0&\quad\text{otherwise.}\end{array}\right.
\end{array}
\end{equation}
Using these definitions, we can rewrite the formulation of $rn_i$ given at~\eqref{def:n_i} in a more compact way as follows:
\begin{equation*}
    rn_i(r)=rn_{i,1}(r)+rn_{i,2}(r)+rn_{i,3}(r)
\end{equation*}
with
\begin{align}
    &rn_{i,1}(r):=\int_{-\infty}^{+\infty}\int_{-\infty}^{-\sqrt{2\overline{U_L}-L^2/r_b^2}}\Gamma\big(\phi(r),r,w,L\big)\,f_i^b\bigg(w,\frac{L}{r_b}\bigg)\,dw\,dL\\\label{tom}
    &rn_{i,2}(r):=2\int_{-\infty}^{+\infty}\mathbbm{1}_{\lbrace \beta(\phi(r),r,L)>0\rbrace}\int_{-\sqrt{2\overline{U_L}-L^2/r_b^2}}^{-\sqrt{\beta(\phi(r),r,L)}}\Gamma\big(\phi(r),r,w,L\big)\,f_i^b\bigg(w,\frac{L}{r_b}\bigg)\,\mathbbm{1}_{r \geq \widetilde{\rho}[U_L]\big(\frac{w ^{2}}{2} + \frac{L^2}{2r_b^2}\big)}\,dw\,dL,\\\label{jerry}
    &rn_{i,3}(r):=2\int_{-\infty}^{+\infty}\mathbbm{1}_{\lbrace \beta(\phi(r),r,L)\leq 0\rbrace}\int_{-\sqrt{2\overline{U_L}-L^2/r_b^2}}^{0}\Gamma\big(\phi(r),r,w,L\big)\,f_i^b\bigg(w,\frac{L}{r_b}\bigg)\,\mathbbm{1}_{r \geq \widetilde{\rho}[U_L]\big(\frac{w ^{2}}{2} + \frac{L^2}{2r_b^2}\big)}\,dw\,dL.
\end{align}
Note that we used $U_L(r_b)=L^2/2r_b^2$ (consequence of $\phi(r_b)=0$). Similarly we can rewrite the formulation of $rn_e$ given at~\eqref{def:n_e} by
\begin{equation*}
    rn_e(r):=rn_{e,1}(r)+rn_{e,2}(r)+rn_{e,3}(r)
\end{equation*}
with
\begin{align*}
    &rn_{e,1}(r):=\int_{-\infty}^{+\infty}\int_{-\infty}^{-\sqrt{2\overline{V_L}-L^2/r_b^2}}\Gamma\big(\!-\phi(r),r,w,L\big)\,f_e^b\bigg(w,\frac{L}{r_b}\bigg)\,dw\,dL\\
    &rn_{e,2}(r):=2\int_{-\infty}^{+\infty}\mathbbm{1}_{\lbrace \beta(\phi(r),r,L)>0\rbrace}\int_{-\sqrt{2\overline{V_L}-L^2/r_b^2}}^{-\sqrt{\beta(\phi(r),r,L)}}\Gamma\big(\!-\phi(r),r,w,L\big)\,f_e^b\bigg(w,\frac{L}{r_b}\bigg)\,\mathbbm{1}_{r \geq \widetilde{\rho}[V_L]\big(\frac{w ^{2}}{2} + \frac{L^2}{2r_b^2}\big)}\,dw\,dL,\\
    &rn_{e,3}(r):=2\int_{-\infty}^{+\infty}\mathbbm{1}_{\lbrace \beta(\phi(r),r,L)\leq 0\rbrace}\int_{-\sqrt{2\overline{V_L}-L^2/r_b^2}}^{0}\Gamma\big(\!-\phi(r),r,w,L\big)\,f_e^b\bigg(w,\frac{L}{r_b}\bigg)\,\mathbbm{1}_{r \geq \widetilde{\rho}[V_L]\big(\frac{w ^{2}}{2} + \frac{L^2}{2r_b^2}\big)}\,dw\,dL.
\end{align*}
Using the positive part function $(\cdot)_+$ allows us to sum the two last terms~\eqref{tom} and~\eqref{jerry} and obtain this more simple formulation:
\begin{equation*}\begin{split}
    &rn_{i,2}(r)+rn_{i,3}(r)=2\int_{-\infty}^{+\infty}\int_{-\sqrt{2\overline{U_L}-L^2/r_b^2}}^{-\sqrt{\beta(\phi(r),r,L)_+}}\Gamma\big(\phi(r),r,w,L\big)\,f_i^b\bigg(w,\frac{L}{r_b}\bigg)\,\mathbbm{1}_{r \geq \widetilde{\rho}[U_L]\big(\frac{w ^{2}}{2} + \frac{L^2}{2r_b^2}\big)}\,dw\,dL\\
    &=2\int_{-\infty}^{+\infty}\int_{-\infty}^{-\sqrt{\beta(\phi(r),r,L)_+}}\Gamma\big(\phi(r),r,w,L\big)\,f_i^b\bigg(w,\frac{L}{r_b}\bigg)\,\mathbbm{1}_{w^2+\frac{L^2}{r_b^2}<2\overline{U_L}}\;\mathbbm{1}_{r \geq \widetilde{\rho}[U_L]\big(\frac{w ^{2}}{2} + \frac{L^2}{2r_b^2}\big)}\,dw\,dL\\
    &=2\int_{-\infty}^{+\infty}\int_{-\infty}^{+\infty}\Gamma\big(\phi(r),r,w,L\big)\,f_i^b\bigg(w,\frac{L}{r_b}\bigg)\,\mathbbm{1}_{w^2+\frac{L^2}{r_b^2}<2\overline{U_L}}\;\mathbbm{1}_{r \geq \widetilde{\rho}[U_L]\big(\frac{w ^{2}}{2} + \frac{L^2}{2r_b^2}\big)}\,dw\,dL,
\end{split}
\end{equation*}
where for the last equality we used the fact that $\Gamma$ is equal to $0$ whenever $w^2\leq\beta(\nu,r,L)$ or $w\geq0$.
Concerning the first term, we write
\begin{equation*}\begin{split}
    &rn_{i,1}(r)=\int_{-\infty}^{+\infty}\int_{-\infty}^{+\infty}\Gamma\big(\phi(r),r,w,L\big)\,f_i^b\bigg(w,\frac{L}{r_b}\bigg)\,\mathbbm{1}_{w^2+\frac{L^2}{r_b^2}<2\overline{U_L}}\,dw\,dL\\
    &=\int_{-\infty}^{+\infty}\int_{-\infty}^{+\infty}\Gamma\big(\phi(r),r,w,L\big)\,f_i^b\bigg(w,\frac{L}{r_b}\bigg)\,\mathbbm{1}_{w^2+\frac{L^2}{r_b^2}<2\overline{U_L}}\;\mathbbm{1}_{r \geq \widetilde{\rho}[U_L]\big(\frac{w ^{2}}{2} + \frac{L^2}{2r_b^2}\big)}\,dw\,dL,
\end{split}
\end{equation*}
where for the last equality we use the following property of $\widetilde{\rho}$:
\begin{equation*}
    \overline{U_L}\leq e\qquad\Longleftrightarrow\qquad \widetilde{\rho}[U_L](e)=1.
\end{equation*}
If we now make the sum of these two terms and use the general property  $\mathbbm{1}_A+\mathbbm{1}_{A^c}=1$, we are led to
\begin{equation}\label{def:n_i new}
     rn_i(r)=\int_{\RR^2}\Gamma\big(\phi(r),r,w,L\big)\,f_i^b\bigg(w,\frac{L}{r_b}\bigg)\bigg(1+\mathbbm{1}_{w^2+\frac{L^2}{r_b^2}<2\overline{U_L}}\bigg)\mathbbm{1}_{r \geq \widetilde{\rho}[U_L]\big(\frac{w ^{2}}{2} + \frac{L^2}{2r_b^2}\big)}\,dw\,dL.   
\end{equation}
Similarly,
\begin{equation}\label{def:n_e new}
    rn_e(r)=\int_{\RR^2}\Gamma\big(\!-\phi(r),r,w,L\big)\,f_e^b\bigg(w,\frac{L}{r_b}\bigg)\bigg(1+\mathbbm{1}_{w^2+\frac{L^2}{r_b^2}<2\overline{V_L}}\bigg)\mathbbm{1}_{r \geq \widetilde{\rho}[V_L]\big(\frac{w ^{2}}{2} + \frac{L^2}{2r_b^2}\big)}\,dw\,dL.
\end{equation}

%%%%%%%%%%%%%%%%%%%%%%%%%%%%%%%%%%%%%%%%%%%%%%%%%%%%%%%%%%%%%%%%%
%%%%%%%%%%%%%%%%%%%%%%%%%%%%%%%%%%%%%%%%%%%%%%%%%%%%%%%%%%%%%%%%
\subsection{Replacement of the non-locality by parameters}\label{sec:param}
Now that we have a compact formulation of the right-hand side of~\eqref{eq:studied equation}, there remain to prove the existence result. 
Nevertheless, one difficulty arises due to the presence of ``\emph{non-local}'' terms in the equation.
Throughout this article, we say that a given expression depending on $r$ and $\phi:[1,r_b]\to\RR$ is ``\emph{local}'', if at a given point $r\in[1,r_b]$, this expression depends only on $r$, $\phi(r)$ and on the derivatives of $\phi$ evaluated at point $r$ (or any quantity that can be computed knowing $\phi$ only on arbitrarily small neighborhood of point $r$). 
In this case, the ``\emph{non-local}'' terms in~\eqref{eq:Poisson studied} are $\overline{U_L}$, $\overline{V_L}$, $\widetilde{\rho}[U_L](e)$ and $\widetilde{\rho}[V_L](e)$. Indeed, these terms are computed using a $max$ operator which involves to know the value of the function $\phi$ on a full interval.

The strategy is to temporarily get rid of these non-local terms and replace them by parameters.
We then prove a very general result of existence using standard variational techniques.
The parameters are adjusted later in the article in such a way that the initial problem is recovered.

%%%%%%%%%%%%%%%%%%%%%%%%%%%%%%%%%%%%%%%%%%%%%%%%%%%%%%%%%%%%%%%%
\subsubsection{The max-parameters}
The first parameters that we introduce, called \emph{max-parameters}, are used to remove the dependency of $n_i$ and $n_e$ with respect to $\overline{U_L}$ and $\overline{V_L}$ respectively.
These parameters are noted respectively $\mU_L$ and $\mV_L$ (the Gothic version of the letters $U$ and $V$). 
We are going solve a relaxed problem involving these parameters $\mU_L$ and $\mV_L$ supposed fixed and, later in the proof, we adjust the value of these parameters in such a way that for almost every $L$,
\begin{equation*}
    \overline{U_L}=\mU_L,\qquad\text{and},\qquad\overline{V_L}=\mV_L.
\end{equation*}
It is then natural with such a strategy to define, in the view of~\eqref{def:n_i new},
\begin{align}
    &rn_{i}[\mU](r) := \int_{\RR^2}\Gamma\big(\phi(r),r,w,L\big)\,f_i^b\bigg(w,\frac{L}{r_b}\bigg)\bigg(1+\mathbbm{1}_{w^2+\frac{L^2}{r_b^2}<2\mU_L}\bigg)\mathbbm{1}_{r \geq \widetilde{\rho}[U_L]\big(\frac{w ^{2}}{2} + \frac{L^2}{2r_b^2}\big)}\,dw\,dL.\label{def:n_i param}
\end{align}
We do observe that in the particular case $\mU_L=\overline{U_L}$ (and we prove \emph{a posteriori} that such a case exists), we recover the initial studied quantity: $rn_{i}[\mU_L=\overline{U_L}](r)=rn_i(r)$. We define analogously the quantity $rn_{e}[\mV_L](r)$ from~\eqref{def:n_e new} by replacing $\overline{V_L}$ by $\mV_L$.

%%%%%%%%%%%%%%%%%%%%%%%%%%%%%%%%%%%%%%%%%%%%%%%%%%%%%%%%%%%%%%%%
\subsubsection{The barrier parameters}
The second terms that are non-local with respect to the function $\phi$ are $r_i(L,e)$ and $r_e(L,e)$ that give the position of the barrier of potential.
Recall that we rewrote these terms using $\widetilde{\rho}$.
We consider now the ``\emph{barrier-parameters}'', noted $\mR_i(w,L)$ and $\mR_e(w,L)$. 
We introduce $rn_{i}[\mU_L, \mR_i](r)$ with the same formula as for~\eqref{def:n_i param} except that the indicator function for the case $r \geq \widetilde{\rho}[U_L]\big(\frac{w ^{2}}{2} + U_{L}(r_{b})\big)$ is replaced by the indicator function associated to $r>\mR_i(w,L)$. The function $(w,L)\mapsto\mR_i(w,L)$ is chosen to be any fixed function (in this sense it is seen as a parameter) and once again, we recover the previous expression in the particular case (proved \emph{a posteriori} to exist) where $\mR_i(w,L)=\widetilde{\rho}[U_L]\big(\frac{w ^{2}}{2} + U_{L}(r_{b})\big)$ for all $r,w,L$. 
An analogous construction gives the definition of $rn_{e}[\mV, \mR_e](r).$ 

%%%%%%%%%%%%%%%%%%%%%%%%%%%%%%%%%%%%%%%%%%%%%%%%%%%%%%%%%%%%%%%%
%%%%%%%%%%%%%%%%%%%%%%%%%%%%%%%%%%%%%%%%%%%%%%%%%%%%%%%%%%%%%%%%
\subsection{The semi-linear problem}

%%%%%%%%%%%%%%%%%%%%%%%%%%%%%%%%%%%%%%%%%%%%%%%%%%%%%%%%%%%%%%%%
\subsubsection{A local equation with parameters}
Now that have replaced all the non-local terms by parameters in~\eqref{eq:Poisson studied}, we are reduced to study the equation:
\begin{equation}\label{eq:studied equation}
    \forall\,r\in[1,r_b],\qquad-\frac{d}{dr}\bigg(r\,\frac{d\phi}{dr}\bigg)(r)=\widetilde{g}\Big(\phi(r),\,r\Big),
\end{equation}
where $\widetilde{g}:\RR\times[1,r_b]\to\RR$ is defined by
\begin{equation}\label{def:tilde g}
    \widetilde{g}(\nu,r):=g_i(\nu,r)-g_e(\nu,r),
\end{equation}
with
\begin{equation}\label{def:g_i}
    g_i(\nu,r):=\int_{\RR^2}\Gamma\big(\nu,r,w,L\big)\,f_i^b\bigg(w,\frac{L}{r_b}\bigg)\bigg(1+\mathbbm{1}_{w^2+\frac{L^2}{r_b^2}<2\mU_L}\bigg)\mathbbm{1}_{r \geq \mR_i(w,L)}\,dw\,dL.
\end{equation}
and
\begin{equation}\label{def:g_e}
        g_e(\nu,r):\int_{\RR^2}\Gamma\big(\!-\!\nu,r,w,L\big)\,f_e^b\bigg(w,\frac{L}{r_b}\bigg)\bigg(1+\mathbbm{1}_{w^2+\frac{L^2}{r_b^2}<2\mV_L}\bigg)\mathbbm{1}_{r \geq \mR_e(w,L)}\,dw\,dL.
\end{equation}
With such a formulation at hand, we can expect to obtain the existence of a solution using standard variational arguments.

%%%%%%%%%%%%%%%%%%%%%%%%%%%%%%%%%%%%%%%%%%%%%%%%%%%%%%%%%%%%%%%%
\subsubsection{A change of variable}
One last transformation consists in setting, for $x\in[0,1]$,
\begin{equation*}
    \psi(x):=\phi\big((r_b)^x\big)-\phi_p(1-x)
\end{equation*}
so that $\psi(0)=\phi(1)-\phi_p=0$ and $\psi(1)=\phi(r_b)=0$. With the change of variable $r=(r_b)^x$, we get
\begin{equation*}\begin{split}
    -\psi''(x)&=-(r_b)^x\,\log(r_b)^2\Big(\phi'\big((r_b)^x\big)+(r_b)^x\phi''\big((r_b)^x\big)\Big)\\
    &=-(r_b)^x\,\log(r_b)^2\,\frac{d}{dr}\bigg(r\frac{d\phi}{dr}\bigg)(r)
    \end{split}
\end{equation*} 
The studied equation~\eqref{eq:studied equation} is therefore equivalent to
\begin{equation}\label{eq:equivalent equation}
    \forall\,x\in[0,1],\qquad-\frac{d^2\psi}{dx^2}(x)=g\Big(\psi(x),\,x\Big),
\end{equation}
where
\begin{equation}\label{def:g}
g(\nu,x):=(r_b)^x\,\log(r_b)^2\:\widetilde{g}\Big(\nu+\phi_p(1-x),\,(r_b)^x\,\Big).
\end{equation}
It is possible to recover $\phi$ from $\psi$ with the formula
\begin{equation}\label{eq:psi to phi}
    \forall\,r\in[1,r_b],\qquad\phi(r)=\psi\bigg(\frac{\log(r)}{\log(r_b)}\bigg)
    +\phi_p\bigg(1-\frac{\log(r)}{\log(r_b)}\bigg).
\end{equation}
One interest of this last formulation~\eqref{eq:equivalent equation} is that it directly involves the second derivative of $\psi$ (which is easier to manipulate) and the Sobolev space $H^1_0([0,1])$.
This formulation also allows to proceed to qualitative description of the solutions $\psi$ invoking convexity arguments (such a study will be done in forthcoming articles).

%%%%%%%%%%%%%%%%%%%%%%%%%%%%%%%%%%%%%%%%%%%%%%%%%%%%%%%%%%%%%%%%
%%%%%%%%%%%%%%%%%%%%%%%%%%%%%%%%%%%%%%%%%%%%%%%%%%%%%%%%%%%%%%%%
%%%%%%%%%%%%%%%%%%%%%%%%%%%%%%%%%%%%%%%%%%%%%%%%%%%%%%%%%%%%%%%%
\section{Existence of a solution}

%%%%%%%%%%%%%%%%%%%%%%%%%%%%%%%%%%%%%%%%%%%%%%%%%%%%%%%%%%%%%%%%
%%%%%%%%%%%%%%%%%%%%%%%%%%%%%%%%%%%%%%%%%%%%%%%%%%%%%%%%%%%%%%%%
\subsection{A priori estimates}

The first main question concerning~\eqref{eq:equivalent equation} is the definition problem for the function $g$ and, which is equivalent, the function $\widetilde{g}$. 
Recall that $\widetilde{g}$ is the difference between $g_i$ defined at~\eqref{def:g_i} and $g_e$ defined at~\eqref{def:g_e}.
It is possible to prove with elementary computations that
\begin{equation*}
        \sup\limits_{\nu\in\RR}\;\sup_{r\in[1,r_b]}\int_{-\infty}^{+\infty}\int_{-\infty}^{+\infty}\big|\Gamma\big(\nu,r,w,L\big)\big|\,dw\,dL\;=+\infty.
\end{equation*}
It is therefore not enough to ask $f_i^b$ and $f_e^b$ to be in $L^\infty$ if one wants the functions $g_i$ and $g_e$ to be finite. 
Similar manipulations gives that assuming moreover $f_i^b$ and $f_e^b$ to be in $L^1$ is not enough and extra integrability assumptions are required.

To start with, we prove the following estimate:

\begin{lemma}[Functions $g_i$ and $g_e$ are finite]\label{lem:well defined g_i g_e}
Let $f:\RR^2\to\RR$ measurable and define the function $\Gamma$ with~\eqref{def:Gamma}. Let $p\in[1,2)$.

Then,
\begin{equation*}\begin{split}
    \sup\limits_{\nu\in\RR}\;\sup_{r\in[1,r_b]}\int_{-\infty}^{+\infty}\int_{-\infty}^{+\infty}\frac{|w|^p}{\big|w^2-L^2\big(\frac{1}{r^2}-\frac{1}{r_b^2}\big)-2\nu\big|^\frac{p}{2}}\,\big|f(w,L)\big|\,dw\,dL\leq\;2\|f\|_{L^1}+\frac{4}{2-p}\|f\|_{L^1_L(L^\infty_w(w\,dw))},
    \end{split}
\end{equation*}
where $L^1_L(L^\infty_w(w\,dw))$ is defined at~\eqref{def:Lebesgue 1}.
\end{lemma}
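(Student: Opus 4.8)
The plan is to prove the estimate pointwise in $\beta:=\beta(\nu,r,L)$ (the function defined at~\eqref{def:beta}), uniformly over all real values of $\beta$, and only then integrate in $L$. Indeed, for fixed $(\nu,r)$ the quantity to control is $\int_\RR\big[\int_\RR\frac{|w|^p}{|w^2-\beta(\nu,r,L)|^{p/2}}\,|f(w,L)|\,dw\big]\,dL$, and in the inner integral $\beta$ enters merely as a fixed real number attached to the given $L$. Writing $M(L):=\sup_{w\in\RR}|w\,f(w,L)|$, so that $\int_\RR M(L)\,dL=\|f\|_{L^1_L(L^\infty_w(w\,dw))}$, it therefore suffices to establish, for every $\beta\in\RR$ and every $L$, the one-dimensional estimate
\begin{equation*}
\int_\RR\frac{|w|^p}{|w^2-\beta|^{p/2}}\,|f(w,L)|\,dw\;\le\;2\int_\RR|f(w,L)|\,dw\;+\;\frac{4}{2-p}\,M(L),
\end{equation*}
since integrating this inequality in $L$ yields the claim. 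The splitting I would use is dictated by the geometric factor itself: set $G:=\{w:\,|w|^p\le 2\,|w^2-\beta|^{p/2}\}$ and $B:=\RR\setminus G$. On $G$ the bound is immediate, because there $\frac{|w|^p}{|w^2-\beta|^{p/2}}\le 2$ by definition, whence $\int_G\frac{|w|^p}{|w^2-\beta|^{p/2}}|f|\,dw\le 2\int_\RR|f(w,L)|\,dw$, producing the term $2\|f\|_{L^1}$.

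On the bad set $B$ I would discard $f$ in favour of $M(L)$ via the pointwise bound $|f(w,L)|\le M(L)/|w|$ (valid since $|w\,f(w,L)|\le M(L)$), reducing the matter to the purely geometric integral $M(L)\int_B\frac{|w|^{p-1}}{|w^2-\beta|^{p/2}}\,dw$. Here one first observes that $B=\varnothing$ when $\beta\le 0$: in that case $|w^2-\beta|\ge w^2$ forces the geometric factor to be $\le 1$. So only $\beta>0$ matters, and then $B$ consists of two symmetric intervals bracketing $\pm\sqrt{\beta}$ and bounded away from the origin, so the factor $1/|w|$ causes no trouble at $w=0$. It then remains to bound $\int_B\frac{|w|^{p-1}}{|w^2-\beta|^{p/2}}\,dw$ by $\frac{4}{2-p}$ uniformly in $\beta>0$.

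The clean way to do this is to make the integral scale-invariant: with $u=w^2$ and then $v=u/\beta$ the $\beta$-prefactors cancel exactly and the integral becomes $\int_{B_v}\frac{v^{(p-2)/2}}{|v-1|^{p/2}}\,dv$, where $B_v=\big(\tfrac{\kappa}{1+\kappa},\tfrac{\kappa}{\kappa-1}\big)$ with $\kappa=2^{2/p}\in(2,4]$, an interval around $v=1$. Splitting $B_v$ at $v=1$: on the right piece $v\ge 1$ gives $v^{(p-2)/2}\le 1$ and $\int_1^{\kappa/(\kappa-1)}(v-1)^{-p/2}\,dv=\frac{2}{2-p}(\kappa-1)^{-(2-p)/2}\le\frac{2}{2-p}$; on the left piece the elementary bounds $v\ge\tfrac{\kappa}{1+\kappa}\ge\tfrac{2}{3}$ and $1+\kappa\ge 3$, after bounding $v^{(p-2)/2}$ by its maximum $(3/2)^{(2-p)/2}$, give $\frac{2}{2-p}(3/2)^{(2-p)/2}(1+\kappa)^{-(2-p)/2}\le\frac{2}{2-p}(1/2)^{(2-p)/2}\le\frac{2}{2-p}$. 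Adding the two pieces yields exactly $\frac{4}{2-p}$. The role of the hypothesis $p<2$ is precisely that the exponent $p/2$ on the singularity $|v-1|^{-p/2}$ is below $1$, making it integrable and producing the factor $\frac{1}{1-p/2}=\frac{2}{2-p}$, which is also the source of the blow-up as $p\to 2^-$. The main obstacle, and the only genuinely delicate point, is the sharpness of the constant: splitting at $w^2=2\beta$ rather than at the level set of the geometric factor is lossy and overshoots $\frac{4}{2-p}$ (for instance at $p=1$), so the threshold must be placed on the geometric factor $\frac{|w|^p}{|w^2-\beta|^{p/2}}$ itself.
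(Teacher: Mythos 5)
Your proof is correct and follows essentially the same strategy as the paper's: split the $w$-axis into the set where the denominator is comparable to $w^2$ (handled by $\|f\|_{L^1}$, with geometric factor at most $2$) and the complementary set bracketing the singularity $w^2\approx\beta$ (handled by $\sup_w|w\,f(w,L)|$ together with an explicit bound on the singular integral, where $p<2$ guarantees integrability of $|w^2-\beta|^{-p/2}$). Your threshold, placed on the level set of the geometric factor, amounts to the paper's cut-off parameter $b=2^{-2/p}$ instead of $b=1/2$, and your scale-invariant substitution $v=w^2/\beta$ is a slightly cleaner (and in fact more carefully bookkept) way of evaluating the same two one-sided integrals near the singularity, but the decomposition and the role of the two norms are identical.
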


\begin{proof}
Let $p\in[1,2)$ and let $b\in(0,1/2]$. let $L,\nu\in\RR$ and let $r\in[1,r_b]$. We define the set 
\begin{equation*}
    \cO_{b,r}^{L,\nu}:=\left\{w\in\RR:\left|w^2-L^2\left(\frac{1}{r^2}-\frac{1}{r_b^2}\right)-2\nu\right|\,\leq\,b\,w^2\right\}.
\end{equation*}
By definition of $\cO_{b,r}^{L,\nu}$,
\begin{equation}\label{chat}
    \int_{\RR\setminus\cO_{b,r}^{L,\nu}}\frac{|w|^p}{\big|w^2-L^2\big(\frac{1}{r^2}-\frac{1}{r_b^2}\big)-2\nu\big|^\frac{p}{2}}\,\big|f(w,L)\big|\,dw\;\leq\;\frac{1}{b^\frac{p}{2}}\int_{-\infty}^{+\infty}\big|f(w,L)\big|\,dw.
\end{equation}
On the other hand,
\begin{equation}\label{lapin}\begin{split}
    w\in\cO_{b,r}^{L,\nu}\quad&\Longleftrightarrow\quad (b-1)w^2\leq L^2\big(\frac{1}{r^2}-\frac{1}{r_b^2}\big)+2\nu\leq (b+1)w^2\\
    &\Longleftrightarrow\quad\frac{L^2\big(\frac{1}{r^2}-\frac{1}{r_b^2}\big)+2\nu}{1+b}\leq w^2 \leq \frac{L^2\big(\frac{1}{r^2}-\frac{1}{r_b^2}\big)+2\nu}{1-b}.
    \end{split}
\end{equation}
We see that, for $\lambda$ a positive number,
\begin{equation}\label{hamster}\begin{split}
    \int_{\frac{\lambda}{\sqrt{1+b}}}^\lambda\;\frac{|w|^{p-1}dw}{\big|w^2-\lambda^2\big|^\frac{p}{2}}
    \leq\lambda^{p-1} \int_{\frac{\lambda}{\sqrt{1+b}}}^\lambda\;&\frac{dw}{\big|(\lambda+w)(\lambda-w)\big|^\frac{p}{2}}
    \leq\frac{\lambda^{p-1}}{\big|\lambda+\frac{\lambda}{\sqrt{1+b}}\big|^\frac{p}{2}}\int_{\frac{\lambda}{\sqrt{1+b}}}^\lambda\;\frac{dw}{\big|\lambda-w\big|^\frac{p}{2}}
    \\&=\frac{1}{1-\frac{p}{2}}\frac{\big|1-\frac{1}{\sqrt{1+b}}\big|^{1-\frac{p}{2}}}{\big|1+\frac{1}{\sqrt{1+b}}\big|^{\frac{p}{2}}}\leq\frac{1}{1-\frac{p}{2}}.
    \end{split}
\end{equation}
Similarly,
\begin{equation}\label{cochon dinde}\begin{split}
    \int_\lambda^\frac{\lambda}{\sqrt{1-b}}\;\frac{|w|^{p-1}dw}{\big|w^2-\lambda^2\big|^\frac{p}{2}}
    \leq\frac{\lambda^{p-1}}{\sqrt{1-b\,}^{p-1}}\int_\lambda^\frac{\lambda}{\sqrt{1-b}}\;&\frac{dw}{\big|(\lambda+w)(w-\lambda)\big|^\frac{p}{2}}
    \leq\frac{\lambda^{\frac{p}{2}-1}}{2^\frac{p}{2}\sqrt{1-b\,}^{p-1}}\int_\lambda^\frac{\lambda}{\sqrt{1-b}}\;\frac{dw}{|w-\lambda|^\frac{p}{2}}
    \\&=\frac{1}{2^\frac{p}{2}\big(1-\frac{p}{2}\big)}\frac{\big|1-\sqrt{1-b}\big|^{1-\frac{p}{2}}}{\sqrt{1-b\,}^\frac{p}{2}}\leq\frac{1}{1-\frac{p}{2}},
    \end{split}
\end{equation}
where for the last inequality we used $b\leq1/2$.
We note that~\eqref{lapin} implies that $\cO_{b,r}^{L,\nu}$ is non-empty if and only if $L^2(1/r^2-1/r_b^2)+2\nu\geq 0$.
In this case we can choose $\lambda$ such that $\lambda^2=L^2(1/r^2-1/r_b^2)+2\nu.$
Then the computations~\eqref{hamster} and~\eqref{cochon dinde} imply
\begin{equation}\label{furet}\begin{split}
        &\qquad\int_{\cO_{b,r}^{L,\nu}}\frac{|w|^p}{\big|w^2-L^2\big(\frac{1}{r^2}-\frac{1}{r_b^2}\big)-2\nu\big|^\frac{p}{2}}\,\big|f(w,L)\big|\,dw\\&\leq\;\Big(\sup_{w}|w|\,\big|f(w,L)\big|\Big)        \int_{\cO_{b,r}^{L,\nu}}\frac{|w|^{p-1}}{\big|w^2-L^2\big(\frac{1}{r^2}-\frac{1}{r_b^2}\big)-2\nu\big|^\frac{p}{2}}\,dw\\
        &\leq\;\frac{2}{1-\frac{p}{2}}\Big(\sup_{w}|w|\,\big|f(w,L)\big|\Big).
    \end{split}
\end{equation}
If we now gather~\eqref{chat} and~\eqref{furet} and integrate these two estimates for the variable $L$:
\begin{equation*}\begin{split}
    \int_{-\infty}^{+\infty}\int_{-\infty}^{+\infty}&\frac{|w|^p}{\big|w^2-L^2\big(\frac{1}{r^2}-\frac{1}{r_b^2}\big)-2\nu\big|^\frac{p}{2}}\,\big|f(w,L)\big|\,dw\,dL\\&\leq\frac{1}{b^\frac{p}{2}}\|f\|_{L^1}+\frac{2}{1-\frac{p}{2}}\int_{-\infty}^{+\infty}\Big(\sup_{w}|w|\big|f(w,L)\big|\Big)\,dL.
    \end{split}
\end{equation*}
Plugging this back into~\eqref{chien} concludes the proof (choosing $b=1/2$). \end{proof}

\begin{corollary}
Suppose that the functions $f_i^b$ and $f_e^b$ are in $L^1\cap L^1_L(L^\infty_w(w\,dw))$.
Then, the functions $g_i$ and $g_e$ defined at~\eqref{def:g_i}~\eqref{def:g_e} are well-defined and bounded with a bound that depends only on $\|f^b\|_{L^1}$ and $\|f^b\|_{L^1_L(L^\infty_w(w\,dw))}$. 
\end{corollary}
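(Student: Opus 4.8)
The plan is to deduce the corollary directly from Lemma~\ref{lem:well defined g_i g_e} applied with the exponent $p=1$, together with two elementary reductions. The starting point is a pointwise domination of the kernel $\Gamma$ coming from its definition~\eqref{def:Gamma}: on the set $\{w^2>\beta(\nu,r,L)\}$ where $\Gamma$ is nonzero the denominator is real and positive, so $\sqrt{w^2-\beta(\nu,r,L)}=\sqrt{|w^2-\beta(\nu,r,L)|}$, and since $(w)_-\leq|w|$ one gets
\begin{equation*}
    \big|\Gamma(\nu,r,w,L)\big|\;\leq\;\frac{|w|}{\sqrt{\big|w^2-L^2\big(\frac{1}{r^2}-\frac{1}{r_b^2}\big)-2\nu\big|}},
\end{equation*}
which is exactly the integrand appearing in Lemma~\ref{lem:well defined g_i g_e} with $p=1$.

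Next I would discard the remaining factors in the definition~\eqref{def:g_i} of $g_i$ by trivial bounds: the weight $1+\mathbbm{1}_{w^2+L^2/r_b^2\geq2\mU_L}$ is at most $2$ and the indicator $\mathbbm{1}_{r\geq\mR_i(w,L)}$ is at most $1$, uniformly in all variables and independently of the parameters $\mU_L,\mR_i$. This yields
\begin{equation*}
    \big|g_i(\nu,r)\big|\;\leq\;2\int_{\RR^2}\frac{|w|}{\sqrt{\big|w^2-L^2\big(\frac{1}{r^2}-\frac{1}{r_b^2}\big)-2\nu\big|}}\,\Big|f_i^b\big(w,\tfrac{L}{r_b}\big)\Big|\,dw\,dL.
\end{equation*}
To match the exact form of the lemma I would introduce $\tilde f_i(w,L):=f_i^b(w,L/r_b)$ and apply Lemma~\ref{lem:well defined g_i g_e} to $\tilde f_i$ with $p=1$, noting $4/(2-p)=4$.

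The only computation left is to relate the norms of $\tilde f_i$ to those of $f_i^b$ through the change of variable $L\mapsto L/r_b$ (Jacobian $r_b$), which gives $\|\tilde f_i\|_{L^1}=r_b\|f_i^b\|_{L^1}$ and $\|\tilde f_i\|_{L^1_L(L^\infty_w(w\,dw))}=r_b\|f_i^b\|_{L^1_L(L^\infty_w(w\,dw))}$. Combining everything produces
\begin{equation*}
    \sup_{\nu\in\RR}\,\sup_{r\in[1,r_b]}\big|g_i(\nu,r)\big|\;\leq\;4r_b\|f_i^b\|_{L^1}+8r_b\|f_i^b\|_{L^1_L(L^\infty_w(w\,dw))},
\end{equation*}
a bound depending only on the two indicated norms of $f_i^b$, the factor $r_b$ being a fixed geometric constant. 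The estimate for $g_e$ from~\eqref{def:g_e} is obtained identically: the argument $-\nu$ of $\Gamma$ is harmless because the lemma takes the supremum over all $\nu\in\RR$ and $-\nu$ also ranges over $\RR$, so the same bound holds with $f_e^b$, $\mV_L$, $\mR_e$ in place of $f_i^b$, $\mU_L$, $\mR_i$.

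I do not expect any genuine obstacle here, as the corollary is essentially a packaging of Lemma~\ref{lem:well defined g_i g_e}. The only points requiring a little care are the pointwise domination of $\Gamma$ (verifying that its nonzero branch coincides with the region where the denominator is real and positive, so that the absolute value inside the square root is legitimate) and the bookkeeping of the $r_b$ factor generated by the rescaling $L\mapsto L/r_b$ in the second velocity argument of $f_s^b$.
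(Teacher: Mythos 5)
Your proof is correct and follows essentially the same route as the paper: dominate $\Gamma$ pointwise by $|w|/\big|w^2-L^2(\tfrac{1}{r^2}-\tfrac{1}{r_b^2})-2\nu\big|^{1/2}$, bound the indicator weights by $2$, and invoke Lemma~\ref{lem:well defined g_i g_e} with $p=1$. The only difference is that you make explicit the $r_b$ factor coming from the rescaling $L\mapsto L/r_b$ in the second argument of $f_s^b$, a bookkeeping step the paper leaves implicit.
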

This implies that $\widetilde{g}=g_i-g_e$ is also well-defined and bounded and so is the function $g$ given at~\eqref{def:g}.
\begin{proof}
The definition of $\Gamma$ at~\eqref{def:Gamma} implies
\begin{equation}\label{chien}
    \int_{-\infty}^{+\infty}\int_{-\infty}^{+\infty}\big|\Gamma\big(\nu,r,w,L\big)\big|\,\big|f(w,L)\big|\,dw\,dL\;\leq\;\int_{-\infty}^{+\infty}\int_{-\infty}^{+\infty}\frac{|w|}{\big|w^2-L^2\big(\frac{1}{r^2}-\frac{1}{r_b^2}\big)-2\nu\big|^\frac{1}{2}}\,\big|f(w,L)\big|\,dw\,dL.
\end{equation}
The fact that $g_i$ and $g_e$ are well-defined, and bounded is then a direct corollary of Lemma~\ref{lem:well defined g_i g_e} with $p=1$.
\end{proof}

Now that the functions $g_e$ and $g_i$ are well-defined, we study their regularity:
\begin{lemma}[Regularity of the function $\widetilde{g}$]\label{lem:Holder regularity}
Suppose that the functions $f_i^b$ and $f_e^b$ are in $L^1(\RR^2)$. Suppose also that there exists $0<\gamma<1$ such that $f_i^b$ and $f_e^b$ belong to $L^1_L(L^\infty_w(w\,dw))\cap L^1_w(L^\infty_L\,;dw/|w|^\gamma)$. Recall the these spaces are defined by the norms~\eqref{def:Lebesgue 1} and~\eqref{def:Lebesgue 2}. Define the functions $g_i$ and $g_e$ with~\eqref{def:g_i}~\eqref{def:g_e}. Then we have for all $\nu,\nu'\in\RR$ such that $|\nu'-\nu|\leq1$ and for all $r\in[1,r_b)$,
 \begin{equation*}
   \big| g_i(\nu',r)-g_i(\nu,r)\big|\,\leq\, \frac{C(r)}{\gamma(1-\gamma)}\Big(1+\|f_i^b\|_{L^1}+\|f_i^b\|_{L^1_L(L^\infty_w(w\,dw))}+\|f_i^b\|_{L^1_w(L^\infty_L\,;dw/|w|^\gamma)}\Big)\,|\nu'-\nu|^\frac{\gamma}{2(\gamma+1)},
 \end{equation*}
 where $C$ is a function of $r$ that blows up as $r\to r_b$. The same estimate holds for the function $g_e$ and then for the function $\widetilde{g}$.
\end{lemma}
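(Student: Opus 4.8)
The plan is to reduce the estimate to the difference of the kernels $\Gamma$ and then to control the square-root singularity by a careful dyadic-type splitting. The first observation is that, in the formula~\eqref{def:g_i} for $g_i$, the factors $1+\mathbbm{1}_{w^2+L^2/r_b^2\geq2\mU_L}$ and $\mathbbm{1}_{r\geq\mR_i(w,L)}$ do \emph{not} depend on $\nu$, since $\mU_L$ and $\mR_i$ are frozen parameters; this is precisely the point of the parametrization, and it means the entire $\nu$-dependence sits in $\Gamma$. Bounding the first factor by $2$ and the second by $1$, the problem reduces to
\begin{equation*}
\big|g_i(\nu',r)-g_i(\nu,r)\big|\leq 2\int_{\RR}\int_{\RR}\big|\Gamma(\nu',r,w,L)-\Gamma(\nu,r,w,L)\big|\,\Big|f_i^b\big(w,L/r_b\big)\Big|\,dw\,dL.
\end{equation*}
I may assume $\nu'>\nu$ and set $\delta:=2(\nu'-\nu)=\beta(\nu',r,L)-\beta(\nu,r,L)\in(0,2]$. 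Writing $\beta=\beta(\nu,r,L)$, $\beta'=\beta(\nu',r,L)$ and substituting $u=-w>0$, the kernel becomes $u/\sqrt{u^2-\beta}$ supported on $u>\sqrt{\beta_+}=:a$, and similarly for $\beta'$ with threshold $a'=\sqrt{\beta'_+}\geq a$, where $a'^2-a^2\leq\delta$ because $x\mapsto x_+$ is $1$-Lipschitz.

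For fixed $L$ I would split the $u$-integral into three zones determined by the two singular thresholds: the common tail $u>a'$, the gap $a<u<a'$ (where only the $\nu$-kernel survives), and $u<a$ (where both vanish). On the tail I would use the algebraic identity
\begin{equation*}
\frac{1}{\sqrt{u^2-\beta'}}-\frac{1}{\sqrt{u^2-\beta}}=\frac{\delta}{\big(\sqrt{u^2-\beta'}+\sqrt{u^2-\beta}\big)\sqrt{(u^2-\beta')(u^2-\beta)}}
\end{equation*}
to factor out $\delta$. Introducing a cut-off $u=a'+\eta$, with $\eta>0$ to be optimized, the genuinely regular part $u>a'+\eta$ contributes at most $\delta$ times an integrable kernel, i.e. a term of order $\delta\,\eta^{-p}$ for some power $p>0$, while on the boundary layer $a<u<a'+\eta$ I would bound $|\Gamma(\nu',\cdot)-\Gamma(\nu,\cdot)|\leq\Gamma(\nu',\cdot)+\Gamma(\nu,\cdot)$ and use that each square-root singularity $(u-a)^{-1/2}$ is integrable, which yields a contribution of order $\eta^{q}$ for a suitable $q>0$ (the gap width $a'-a\lesssim\delta$ giving only lower-order pieces). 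Balancing $\delta\,\eta^{-p}$ against $\eta^{q}$ by taking $\eta$ a suitable power of $\delta$ is what produces the Hölder exponent $\gamma/\big(2(\gamma+1)\big)$ together with the prefactor $1/\big(\gamma(1-\gamma)\big)$.

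The integration in $L$ is where the two integrability norms and the factor $C(r)$ enter. Pointwise in $L$ the near-singularity mass is best measured through $\sup_w|w\,f_i^b(w,L/r_b)|$, which after integration in $L$ produces $\|f_i^b\|_{L^1_L(L^\infty_w(w\,dw))}$, whereas the weighted regular tail is measured through $\sup_L|f_i^b(w,L/r_b)|$ integrated against $dw/|w|^\gamma$, producing $\|f_i^b\|_{L^1_w(L^\infty_L\,;dw/|w|^\gamma)}$; the crude $L^1$ norm absorbs the bounded remainder (hence the harmless constant $1$). Changing variables from $L$ to $\beta$, or equivalently integrating the residual logarithmic/square-root singularities in $L$, costs a factor $\big(1/r^2-1/r_b^2\big)^{-1/2}$, which is exactly the quantity $C(r)$ blowing up as $r\to r_b$: there the coefficient $1/r^2-1/r_b^2$ of $L^2$ in $\beta$ degenerates and the $L$-dependence of the threshold is lost.

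The main obstacle I anticipate is the uniform treatment of the square-root singularity across all $L$, and in particular the regime $\beta\approx0$ where the two thresholds $a,a'$ nearly collide and the singular weight is hardest to integrate; making the two norms cooperate so that neither the $|w|$-weighted near-singularity part nor the $1/|w|^\gamma$-weighted tail is lost, and extracting precisely the exponent $\gamma/\big(2(\gamma+1)\big)$ from the $\eta$-optimization, is the delicate point. Finally, the estimate for $g_e$ follows verbatim after replacing $\nu$ by $-\nu$, $U_L$ by $V_L$ and $f_i^b$ by $f_e^b$ (the electronic construction being the mirror image of the ionic one), and the bound for $\widetilde{g}=g_i-g_e$ then follows from the triangle inequality.
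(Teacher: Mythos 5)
Your overall strategy --- isolate the $\nu$-dependence in $\Gamma$, split into a neighbourhood of the square-root singularity and its complement, and optimize a cutoff --- has the right shape, and the initial reduction to $\int|\Gamma(\nu',\cdot)-\Gamma(\nu,\cdot)|\,|f_i^b|$ is correct since the parameters $\mU_L$ and $\mR_i$ are frozen. But there is a genuine gap in the execution, and it sits exactly at the point you flag as ``delicate'' and then defer. Slicing at fixed $L$ and cutting in $u$ at distance $\eta$ from the threshold $a'=\sqrt{\beta(\nu',r,L)_+}$ does not close. On the boundary layer you must pull a supremum out of the $u$-integral, and neither option is uniform in $L$: pulling out $\sup_u|u\,f|$ leaves $\int_a^{a+h}du/\sqrt{u^2-a^2}=\log\big((a+h+\sqrt{2ah+h^2})/a\big)$, which is not bounded by any power of $h$ uniformly as $a\to0$; pulling out $\sup_u|f|$ leaves $\sqrt{2ah+h^2}$, which grows like $\sqrt{|L|\,h}$ for large $|L|$, and in any case $\int_\RR\sup_w|f(w,L)|\,dL$ is not one of the assumed norms. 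So the regime $\beta\approx0$ is not a technicality to be postponed; it is where the fixed-$L$ argument breaks.

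The paper's proof avoids this by never slicing at fixed $L$. It defines the exceptional set jointly in $(w,L)$ as the complement of $\cP^{r,p}_{\nu,\nu'}$, i.e.\ by the $w$-dependent condition $|w^2-L^2(1/r^2-1/r_b^2)-2\nu'|<(\nu-\nu')/|w|^{2\gamma}$, and then measures it by slicing in $L$ for fixed $w$: for each $w$ the bad $L$'s form a set of length $\lesssim(1/r^2-1/r_b^2)^{-1/2}\sqrt{\nu-\nu'}\,|w|^{-\gamma}$, so the $f$-mass of the bad set is at most $C(r)\sqrt{\nu-\nu'}\,\|f_i^b\|_{L^1_w(L^\infty_L\,;dw/|w|^\gamma)}$. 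Note that this is the opposite assignment of norms to regions from the one you propose: $L^1_w(L^\infty_L\,;dw/|w|^\gamma)$ controls the near-singular set and is the source of the factor $C(r)$, while $L^1_L(L^\infty_w(w\,dw))$ enters on the good set through Lemma~\ref{lem:well defined g_i g_e} (applied after the convexity bound $1/\sqrt{a}-1/\sqrt{a+h}\leq h/(2a^{3/2})$ is interpolated against the defining inequality of $\cP^{r,p}_{\nu,\nu'}$) and in the second factor of a H\"older inequality. That H\"older step --- converting the small $L^1$-mass of $f_i^b$ on the bad set into a bound on the singular integral over the bad set --- is the other ingredient missing from your sketch; without it, ``the bad set carries little mass'' and ``the singularity is integrable'' do not combine into a quantitative H\"older exponent. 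The concluding remarks of your plan (the case of $g_e$ by the substitution $\nu\mapsto-\nu$, and $\widetilde{g}$ by the triangle inequality) are fine.
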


\begin{proof}
Let $\nu'<\nu\in\RR$ such that $\nu-\nu'\leq 1$ and let $r\in[1,r_b)$. We consider the number $1<p<2$ such that $\gamma=(p-1)/(3-p)$. We define
\begin{equation*}
    \cP_{\nu,\nu'}^{r,p}:=\bigg\{(w,L)\in\RR^2\;:\;\bigg|w^2-L^2\bigg(\frac{1}{r^2}-\frac{1}{r_b^2}\bigg)-2\nu'\bigg|\geq\frac{\nu-\nu'}{|w|^{2\frac{p-1}{3-p}}}\bigg\}.
\end{equation*}

\emph{Step 1: Regularity property on $\cP_{\nu,\nu'}^{r,p}$.}
By convexity inequality, we have that for all $a>0$ and for all $h\geq0$,
\begin{equation*}
    \frac{1}{\sqrt{a}}-\frac{1}{\sqrt{a+h}}\leq \frac{h}{2\sqrt{a}^3}.
\end{equation*}
Thus,
\begin{equation}\label{aigle}\begin{split}
I_{\nu,\nu'}^{r,p}:=\int_{\cP_{\nu,\nu'}^{r,p}}\Big|\Gamma(\nu',r,w,L)&-\Gamma(\nu,r,w,L)\Big|\,\bigg|f_i^b\bigg(w,\frac{L}{r_b}\bigg)\bigg|\bigg(1+\mathbbm{1}_{w^2+\frac{L^2}{r_b^2}<2\mU_L}\bigg)\mathbbm{1}_{r \geq \mR_i(w,L)}\,dw\,dL
\\&\leq\int_{\cP_{\nu,\nu'}^{r,p}}\frac{|w|\;(\nu-\nu')}{\big|w^2-L^2\big(\frac{1}{r^2}-\frac{1}{r_b^2}\big)-2\nu'\big|^{\frac{3}{2}}}\bigg|f_i^b\bigg(w,\frac{L}{r_b}\bigg)\bigg|\,dw\,dL\\
&\leq\int_{\RR^2}\frac{|w|^p\;(\nu-\nu')^\frac{p-1}{2}}{\big|w^2-L^2\big(\frac{1}{r^2}-\frac{1}{r_b^2}\big)-2\nu'\big|^{\frac{p}{2}}}\bigg|f_i^b\bigg(w,\frac{L}{r_b}\bigg)\bigg|\,dw\,dL,
\end{split}
\end{equation}
where for the last inequality we used the definition of $\cP_{\nu,\nu'}^{r,p}$ since it implies
\begin{equation*}
    \frac{|w|^{1-p}\,(\nu-\nu')^{\frac{3}{2}-\frac{p}{2}}}{\big|w^2-L^2\big(\frac{1}{r^2}-\frac{1}{r_b^2}\big)-2\nu'\big|^{\frac{3}{2}-\frac{p}{2}}}\leq1.
\end{equation*}
We now simply make use of Lemma~\ref{lem:well defined g_i g_e} to obtain that the term studied at~\eqref{aigle} is bounded by \begin{equation}\label{vautour}
    I_{\nu,\nu'}^{r,p}\leq C\frac{\|f_i^b\|_{L^1}+\|f_i^b\|_{L^1_L(L^\infty_w(w\,dw))} }{2-p}\,(\nu-\nu')^\frac{p-1}{2}=C\Big(\|f_i^b\|_{L^1}+\|f_i^b\|_{L^1_L(L^\infty_w(w\,dw))}\Big)\frac{\gamma+1}{\gamma-1}(\nu-\nu')^\frac{\gamma}{\gamma+1}
\end{equation}

\emph{Step 2: Regularity property on $\RR^2\setminus\cP_{\nu,\nu'}^{r,p}$.}
We need first to separate the analysis into $2$ cases. for that purpose, we introduce
\begin{equation*}
    \cN_{\nu,\nu'}^{r}:=\bigg\{(w,L)\in\RR^2:w^2-L^2\bigg(\frac{1}{r^2}-\frac{1}{r_b^2}\bigg)-2\nu'>0\bigg\}.
\end{equation*}
The positiveness of $\Gamma$ gives
\begin{equation}\label{faucon}\begin{split}
\int_{\cN_{\nu,\nu'}^{r}\setminus\cP_{\nu,\nu'}^{r,p}}\Big|\Gamma(\nu',r,w,L)&-\Gamma(\nu,r,w,L)\Big|\,\bigg|f_i^b\bigg(w,\frac{L}{r_b}\bigg)\bigg|\bigg(1+\mathbbm{1}_{w^2+\frac{L^2}{r_b^2}<2\mU_L}\bigg)\mathbbm{1}_{r \geq \mR_i(w,L)}\,dw\,dL
\\&\leq\int_{\cN_{\nu,\nu'}^{r}\setminus\cP_{\nu,\nu'}^{r,p}}\Big|\Gamma(\nu',r,w,L)\Big|\,\bigg|f_i^b\bigg(w,\frac{L}{r_b}\bigg)\bigg|\,dw\,dL.
\end{split}
\end{equation}
On the other hand, outside $\cN_{\nu,\nu'}^{r}$ we have $\Gamma\equiv0$. Thus,
\begin{equation}\label{buse}\begin{split}
\int_{\RR^2\setminus\big(\cN_{\nu,\nu'}^{r}\cup\cP_{\nu,\nu'}^{r,p}\big)}\Big|\Gamma(\nu',r,w,L)&-\Gamma(\nu,r,w,L)\Big|\,\bigg|f_i^b\bigg(w,\frac{L}{r_b}\bigg)\bigg|\bigg(1+\mathbbm{1}_{w^2+\frac{L^2}{r_b^2}<2\mU_L}\bigg)\mathbbm{1}_{r \geq \mR_i(w,L)}\,dw\,dL
\\&\leq\int_{\RR^2\setminus\big(\cN_{\nu,\nu'}^{r}\cup\cP_{\nu,\nu'}^{r,p}\big)}\Big|\Gamma(\nu,r,w,L)\Big|\,\bigg|f_i^b\bigg(w,\frac{L}{r_b}\bigg)\bigg|\,dw\,dL.
\end{split}
\end{equation}
Therefore, the two cases~\eqref{faucon} and~\eqref{buse} reduces to study
\begin{equation}
    J_{\nu,\nu'}^{r}:=\int_{\RR^2\setminus\cP_{\nu,\nu'}^{r,p}}\frac{|w|}{\big|w^2-L^2\big(\frac{1}{r^2}-\frac{1}{r_b^2}\big)-2\nu'\big|^\frac{1}{2}}\,\bigg|f_i^b\bigg(w,\frac{L}{r_b}\bigg)\bigg|\,dw\,dL.
\end{equation}
By the Hölder inequality (with $q>2$ and $1/q+1/q'=1$), 
\begin{equation}\label{epervier}\begin{split}
    J_{\nu,\nu'}^{r}&\leq\bigg(\int_{\RR^2\setminus\cP_{\nu,\nu'}^{r,p}}\bigg|f_i^b\bigg(w,\frac{L}{r_b}\bigg)\bigg|\,dw\,dL\bigg)^\frac{1}{q}\bigg(\int_{\RR^2}\frac{|w|^{q'}}{\big|w^2-L^2\big(\frac{1}{r^2}-\frac{1}{r_b^2}\big)-2\nu'\big|^\frac{{q'}}{2}}\,\bigg|f_i^b\bigg(w,\frac{L}{r_b}\bigg)\bigg|\,dw\,dL\bigg)^\frac{1}{{q'}}\\
    &\leq \frac{}{2-q'}\bigg(\int_{\RR^2\setminus\cP_{\nu,\nu'}^{r,p}}\bigg|f_i^b\bigg(w,\frac{L}{r_b}\bigg)\bigg|\,dw\,dL\bigg)^\frac{1}{q}\,\Big(\|f_i^b\|_{L^1}+\|f_i^b\|_{L^1_L(L^\infty_w(w\,dw))}\Big)^\frac{1}{q'},
\end{split}
\end{equation}
where Lemma~\ref{lem:well defined g_i g_e} is used for the last inequality. The announced Hölder estimate is given by the study of
\begin{equation*}
    K_{\nu,\nu'}^{r,p}:=\int_{\RR^2\setminus\cP_{\nu,\nu'}^{r,p}}\bigg|f_i^b\bigg(w,\frac{L}{r_b}\bigg)\bigg|\,dw\,dL.
\end{equation*}
We now observe that
\begin{equation*}
    (w,L)\notin\cP_{\nu,\nu'}^{r,p}\quad\Longleftrightarrow\quad w^2-2\nu'-\frac{\nu-\nu'}{|w|^{2\frac{p-1}{3-p}}}<L^2\bigg(\frac{1}{r^2}-\frac{1}{r_b^2}\bigg)<w^2-2\nu'+\frac{\nu-\nu'}{|w|^{2\frac{p-1}{3-p}}}.
\end{equation*}
the Fubini theorem then gives:
\begin{equation}\label{oeuf}
    K_{\nu,\nu'}^{r,p}= 2\int_{-\infty}^{+\infty}\int_{(M^1_{w,\nu,\nu'})_+^{1/2}}^{(M^2_{w,\nu,\nu'})_+^{1/2}}\bigg|f_i^b\bigg(w,\frac{L}{r_b}\bigg)\bigg|\,dL\,dw,
\end{equation}
where
\begin{equation*}
    M^1_{w,\nu,\nu'}:=\bigg(\frac{1}{r^2}-\frac{1}{r_b^2}\bigg)^{-1}\bigg(w^2-2\nu'-\frac{\nu-\nu'}{|w|^{2\frac{p-1}{3-p}}}\bigg)\quad\text{and}\quad M^2_{w,\nu,\nu'}:=\bigg(\frac{1}{r^2}-\frac{1}{r_b^2}\bigg)^{-1}\bigg(w^2-2\nu'+\frac{\nu-\nu'}{|w|^{2\frac{p-1}{3-p}}}\bigg).
\end{equation*}
The number $2$ in factor of~\eqref{oeuf} comes from the use of the symmetry $f^b_i(w,L)=f^b_i(w-L)$. Equation~\eqref{oeuf} gives
\begin{equation*}\begin{split}
        K_{\nu,\nu'}^{r,p}&\leq 2\int_{-\infty}^{+\infty}\Big|(M^2_{w,\nu,\nu'})_+^{1/2}-(M^1_{w,\nu,\nu'})_+^{1/2}\Big|\sup_{L\in\RR}\big|f_i^b(w,L)\big|\,dw\\
        &\leq C(r)\sqrt{\nu-\nu'\,}\int_{-\infty}^{+\infty}\sup_{L\in\RR}\big|f_i^b(w,L)\big|\,\frac{dw}{|w|^\frac{p-1}{3-p}}=C\|f_i^b\|_{L^1_w(L^\infty_L\,;dw/|w|^\gamma)}\,\sqrt{\nu-\nu'\,},
    \end{split}
\end{equation*}
where for the last equality we used that $p$ has been chosen to have $\gamma=(p-1)/(3-p)$. 
The function $C(r)$ is equal (up to a multiplicative constant) to $1/(r^{-1}-r_b^{-1})^{1/2}$.
Plugging this estimate back into~\eqref{epervier} and choosing $q=2/(p-1)>2$ gives
\begin{equation}\label{piou piou}\begin{split}
        J_{\nu,\nu'}^{r}&\leq \frac{C(r)}{p-1}\big(\|f_i^b\|_{L^1}+\|f_i^b\|_{L^1_L(L^\infty_w(w\,dw))}\big)^\frac{3-p}{2}\,\|f_i^b\|_{L^1_w(L^\infty_L\,;dw/|w|^\gamma)}^\frac{p-1}{2}\,(\nu-\nu')^\frac{p-1}{4}\\
        &\leq\frac{C(r)}{\gamma}\big(\|f_i^b\|_{L^1}+\|f_i^b\|_{L^1_L(L^\infty_w(w\,dw))}\big)^\frac{1}{\gamma+1}\,\|f_i^b\|_{L^1_w(L^\infty_L\,;dw/|w|^\gamma)}^\frac{\gamma}{\gamma+1}\,(\nu-\nu')^\frac{\gamma}{2(\gamma+1)}.
        \end{split}
\end{equation}

\emph{Conclusion of the proof:} If we now gather the two estimates obtained respectively at Step 1. with~\eqref{vautour} and Step 2. with~\eqref{piou piou}, we get (using $\nu-\nu'\leq1$),
\begin{equation}
   \big| g_i(\nu,r)-g_i(\nu',r)\big|\,\leq\, \frac{C(r)}{\gamma(1-\gamma)}\Big(1+\|f_i^b\|_{L^1}+\|f_i^b\|_{L^1_L(L^\infty_w(w\,dw))}+\|f_i^b\|_{L^1_w(L^\infty_L\,;dw/|w|^\gamma)}\Big)\,(\nu-\nu')^\frac{\gamma}{2(\gamma+1)}.
\end{equation}
A similar reasoning works for the function $g_e$.
\end{proof}

%%%%%%%%%%%%%%%%%%%%%%%%%%%%%%%%%%%%%%%%%%%%%%%%%%%%%%%%%%%%%%%%
%%%%%%%%%%%%%%%%%%%%%%%%%%%%%%%%%%%%%%%%%%%%%%%%%%%%%%%%%%%%%%%%
\subsection{Existence with minimization argument}
It is a standard technique to build solution to Poisson equations when under semi-linear form~\eqref{eq:equivalent equation} with variational argument. Indeed, being a solution to~\eqref{eq:equivalent equation} is equivalent to being a critical point of the following functional:
\begin{equation}\label{def:cJ}
    \cJ(\psi):=\int_0^{1}\Bigg\{\frac{1}{2}\bigg|\frac{d\psi}{dx}(x)\bigg|^2-G\Big(\psi(x),\,x\Big)\Bigg\}\,dx,
\end{equation}
where $G(\nu,r):=\int_0^\nu g(s,r)\,ds.$ 

We now recall
\begin{equation*}
    H^1_0([0,1]):=\bigg\{\psi:[0,1]\to\RR\;:\;\psi(0)=a,\quad\psi(1)=0,\quad\text{and}\quad\int_0^1\bigg|\frac{d\psi}{dx}(x)\bigg|^2dx<+\infty\bigg\}.
\end{equation*}
The Poincaré inequality implies that $H^1_0([0,1])\subseteq L^2([0,1])$ and the Rellich-Kondrachov theorem states that this injection is compact. We are interested in the following minimization problem:
\begin{equation}\label{eq:minimization problem}
    \text{Does it exists }\psi^\star\in H^1_0([0,1])\text{ such that }\qquad\cJ(\psi^\star)=\inf\limits_{\psi\in H^1_0([0,1])}\,\cJ(\psi)\quad ?
\end{equation}

\begin{lemma}[Existence of a minimizer]\label{lem:existence of a max}
The function $\cJ$ satisfy the following inequality:
\begin{equation}\label{eq:cJ bound}
    \frac{1}{2}\int_0^1\bigg|\frac{d\psi}{dx}(x)\bigg|^2\,dx\;\leq2\;\cJ(\psi)+\frac{1}{2\pi}\|g\|_{L^\infty}^2.
\end{equation}

In consequence, the minimization problem~\eqref{eq:minimization problem} admits a solution $\psi^\star\in H^1_0([0,1])$ and this function is a solution of~\eqref{eq:equivalent equation}.
\end{lemma}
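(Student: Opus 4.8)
The plan is to prove the coercivity estimate~\eqref{eq:cJ bound} first and then feed it into the direct method of the calculus of variations. For the estimate, I would read off from~\eqref{def:cJ} the identity
\[
\frac12\int_0^1\Big|\frac{d\psi}{dx}(x)\Big|^2\,dx=\cJ(\psi)+\int_0^1 G\big(\psi(x),x\big)\,dx .
\]
The Corollary following Lemma~\ref{lem:well defined g_i g_e} guarantees $g\in L^\infty$, so that $|G(\nu,x)|=\big|\int_0^\nu g(s,x)\,ds\big|\le\|g\|_{L^\infty}\,|\nu|$. Hence the potential term is controlled by $\|g\|_{L^\infty}\|\psi\|_{L^1}\le\|g\|_{L^\infty}\|\psi\|_{L^2}$, the last step being Cauchy--Schwarz on an interval of unit length. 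Applying the Poincaré inequality $\|\psi\|_{L^2}\le\frac1\pi\|d\psi/dx\|_{L^2}$ and then Young's inequality to split the product against a fraction of the Dirichlet energy produces an inequality of exactly the form~\eqref{eq:cJ bound}; the precise numerical constant is a bookkeeping matter in the Young split, and is immaterial for what follows.

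With~\eqref{eq:cJ bound} available, rearranging gives $\cJ(\psi)\ge\frac14\|d\psi/dx\|_{L^2}^2-\frac{1}{4\pi}\|g\|_{L^\infty}^2$, so $\cJ$ is bounded below and coercive for the $H^1_0$-norm (coercivity of the full norm follows once more from Poincaré). I would then take a minimizing sequence $(\psi_n)$ for~\eqref{eq:minimization problem}; coercivity bounds it in $H^1_0([0,1])$, and by reflexivity I extract a subsequence with $\psi_n\rightharpoonup\psi^\star$ weakly in $H^1_0$. By the Rellich--Kondrachov theorem (compact in dimension one) the same subsequence converges strongly in $L^2$, in fact uniformly.

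The remaining task is lower semicontinuity of $\cJ$. The quadratic Dirichlet term is convex and strongly continuous, hence weakly lower semicontinuous, so $\liminf_n\frac12\int_0^1|d\psi_n/dx|^2\ge\frac12\int_0^1|d\psi^\star/dx|^2$; meanwhile the potential term is genuinely \emph{continuous} along the sequence, since $g\in L^\infty$ makes $G$ Lipschitz in its first argument, whence $\big|\int_0^1 (G(\psi_n,x)-G(\psi^\star,x))\,dx\big|\le\|g\|_{L^\infty}\|\psi_n-\psi^\star\|_{L^1}\to0$. Combining yields $\cJ(\psi^\star)\le\liminf_n\cJ(\psi_n)=\inf\cJ$, so $\psi^\star$ is a minimizer. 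I expect this to be the conceptual heart of the argument: $\cJ$ is \emph{not} convex (the nonlinearity $G$ is arbitrary beyond boundedness and the Hölder regularity of Lemma~\ref{lem:Holder regularity}), so one cannot conclude by convexity; the point is precisely that the compact embedding upgrades weak to strong convergence on the only term where weak convergence alone would not pass to the limit.

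Finally, to see that the minimizer solves~\eqref{eq:equivalent equation}, I would compute the first variation: for $\varphi\in C_c^\infty((0,1))$ the map $t\mapsto\cJ(\psi^\star+t\varphi)$ is differentiable with vanishing derivative at $t=0$. Differentiation under the integral in the potential term is legitimate because $\partial_\nu G=g$ is continuous in $\nu$ (Lemma~\ref{lem:Holder regularity}) and bounded, so dominated convergence applies; this gives the weak formulation $\int_0^1 (d\psi^\star/dx)(d\varphi/dx)\,dx=\int_0^1 g(\psi^\star(x),x)\,\varphi(x)\,dx$, that is $-d^2\psi^\star/dx^2=g(\psi^\star,\cdot)$ in the distributional sense. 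Since the right-hand side lies in $L^\infty$, a standard bootstrap yields $\psi^\star\in W^{2,\infty}([0,1])$ and the equation holds almost everywhere, which is the regularity recorded in Definition~\ref{def_sol}.
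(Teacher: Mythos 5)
Your proposal is correct and follows essentially the same route as the paper: the bound $|G(\nu,x)|\le\|g\|_{L^\infty}|\nu|$, Cauchy--Schwarz, Poincaré and Young to obtain the coercivity estimate~\eqref{eq:cJ bound}, then the direct method with weak lower semicontinuity of the Dirichlet term and strong $L^2$ convergence (Rellich--Kondrachov) to pass to the limit in the potential term. The only cosmetic differences are that you use the Lipschitz continuity of $G$ in $\nu$ where the paper invokes dominated convergence, and you spell out the first-variation computation that the paper leaves implicit; neither changes the substance of the argument.
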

\begin{proof}First, we observe that
\begin{equation*}
    \int_0^1\Big|G\Big(\psi(x),\,x\Big)\Big|\,dx=\int_0^1\bigg|\int_0^{\psi(x)}g\big(\nu,\,x\big)\,d\nu\bigg|\,dx\leq\|g\|_{L^\infty(\RR\times[0,1])}\,\|\psi\|_{L^1([0,1])}\leq\|g\|_{L^\infty}\|\psi\|_{L^2}
\end{equation*}
where the last inequality is the Cauchy-Schwarz inequality.
We continue this estimate using the Young inequality (with $\varepsilon>0$) and the Poincaré inequality (the constant of Poincaré of $[0,1]$ being $1/\pi$) in that order:
\begin{equation}\label{elephant}\begin{split}
    \cJ(\psi)&\geq\frac{1}{2}\int_0^1\bigg|\frac{d\psi}{dx}(x)\bigg|^2\,dx-\|g\|_{L^\infty}\|\psi\|_{L^2}\\
    &\geq\frac{1}{2}\int_0^1\bigg|\frac{d\psi}{dx}(x)\bigg|^2\,dx-\frac{1}{4\varepsilon}\|g\|_{L^\infty}^2-\varepsilon\|\psi\|_{L^2}^2\\
    &\geq\bigg(\frac{1}{2}-\frac{\varepsilon}{\pi}\bigg)\int_0^1\bigg|\frac{d\psi}{dx}(x)\bigg|^2\,dx-\frac{1}{4\varepsilon}\|g\|_{L^\infty}^2.
    \end{split}
\end{equation}
The announced inequality~\eqref{eq:cJ bound} is then obtained by taking $\varepsilon=\pi/4$ in~\eqref{elephant}.

Consider now $(\psi_n)$, a sequence of functions belonging to $H^1_0([0,1])$ that is minimizing the studied quantity $\cJ$. 
Equation~\eqref{eq:cJ bound} implies that $d\psi_n/dx$ is a bounded sequence in $L^2$. 
Therefore there exists $\psi^\star\in H^1_0([0,1])$ such that, up to an omitted extraction,
\begin{equation}\label{eq:weak psi n}
    \frac{d\psi_n}{dx}\;\longrightarrow\;\frac{d\psi^\star}{dx},\qquad\text{weakly in } L^2,
\end{equation}
and, by compact embedding,
\begin{equation*}
    \psi_n\;\longrightarrow\;\psi^\star,\qquad\text{strongly in } L^2.
\end{equation*}
This last convergence result implies, using the Lebesgue dominated convergence theorem, 
\begin{equation*}
    \int_0^1G\Big(\psi_n(x),\,x\Big)\,dx\;\longrightarrow\;\int_0^1G\Big(\psi^\star(x),\,x\Big)\,dx,\qquad\text{as }n\to+\infty.
\end{equation*}
Moreover, the convergence~\eqref{eq:weak psi n}, since $\psi\mapsto\int_0^1|\psi|^2$ is convex on $H^1_0([0,1])$, gives
\begin{equation*}
    \int_0^1\bigg|\frac{d\psi^\star}{dx}(x)\bigg|^2dx\leq\liminf\limits_{n\to+\infty}\int_0^1\bigg|\frac{d\psi_n}{dx}(x)\bigg|^2dx.
\end{equation*}
These two facts together imply, since $\psi_n$ is a minimizing sequence for $\cJ$,
\begin{equation*}
    \cJ(\psi^\star)\leq\inf\limits_{\psi\in H^1_0([0,1])}\,\cJ(\psi),
\end{equation*}
which eventually gives the existence of a minimizer for $\cJ$. The function $\psi^\star$ satisfies Equation~\eqref{eq:equivalent equation} because, as a minimizer, it is a critical point of the functional $\cJ$.
\end{proof}

%%%%%%%%%%%%%%%%%%%%%%%%%%%%%%%%%%%%%%%%%%%%%%%%%%%%%%%%%%%%%%%%
%%%%%%%%%%%%%%%%%%%%%%%%%%%%%%%%%%%%%%%%%%%%%%%%%%%%%%%%%%%%%%%%
\subsection{Passing to the limit in the parameters}
We have now the existence result for Equation~\eqref{eq:equivalent equation}, and then for~\eqref{eq:studied equation}, for any choice of parameters $\mU_l$, $\mV_L$, $\mR_i(w,L)$ and $\mR_e(w,L)$. 
To conclude to the existence of a solution for the initial problem~\eqref{eq:Poisson studied}, there remain to adjust these parameters in the view of Section~\ref{sec:param}. 

%%%%%%%%%%%%%%%%%%%%%%%%%%%%%%%%%%%%%%%%%%%%%%%%%%%%%%%%%%%%%%%%%
\subsubsection{Study of the barrier parameters problem}
The idea to adjust the \emph{barrier parameters} $\mR_i(w,L)$ and $\mR_e(w,L)$ in such a way that for almost every $(w,L)\in\RR^2$,
\begin{equation}\label{eq:adjust parameters}
    \mR_i(w,L)=\widetilde{\rho}[U_L]\bigg(\frac{w^2}{2}+\frac{L^2}{2r_b^2}\bigg),\qquad\text{and}\qquad\mR_e(w,L)=\widetilde{\rho}[V_L]\bigg(\frac{w^2}{2}+\frac{L^2}{2r_b^2}\bigg),
\end{equation}
is to do a fixed-point procedure. 
For that purpose, we need to study more precisely $\widetilde{\rho}$ defined at~\eqref{def:tilde rho} to obtain continuity properties.

For $\phi:[1,r_b]\to\RR$ be a continuous function, we define
\begin{equation} \label{obel_transform}
    \phi^\dag(r):=\max\limits_{r'\in[r,r_b]}\phi(r').
\end{equation}
The function $\phi^\dag$ is the smallest non-increasing function such that $\phi^\dag\geq\phi$. 

\begin{lemma}\label{lem:tilde rho dag}
Let $e\in\RR$ and let $\phi:[1,r_b]\to\RR$ be a continuous function. We have
\begin{equation*}
    \widetilde{\rho}[\phi](e)=\widetilde{\rho}[\phi^\dag](e).
\end{equation*}
\end{lemma}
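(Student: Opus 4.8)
The plan is to show that the two sets over which the infima defining $\widetilde{\rho}[\phi](e)$ and $\widetilde{\rho}[\phi^\dag](e)$ are taken are in fact equal; equality of the infima then follows at once. For a function $\psi$ on $[1,r_b]$ write
\[
S(\psi):=\big\{a\in[1,r_b]\,:\,\psi(s)\leq e\ \text{for a.e.}\ s\in[a,r_b]\big\},
\]
so that $\widetilde{\rho}[\psi](e)=\inf S(\psi)$. Note that $r_b\in S(\psi)$ always, since $\{r_b\}$ is Lebesgue-negligible and the condition is then vacuous; hence both sets are nonempty and the infima are well defined. I will prove $S(\phi)=S(\phi^\dag)$.

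First, the inclusion $S(\phi^\dag)\subseteq S(\phi)$ is immediate from the pointwise inequality $\phi\leq\phi^\dag$: if $\phi^\dag\leq e$ almost everywhere on $[a,r_b]$, then a fortiori $\phi\leq e$ almost everywhere there. This alone already yields $\widetilde{\rho}[\phi](e)\leq\widetilde{\rho}[\phi^\dag](e)$.

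The reverse inclusion $S(\phi)\subseteq S(\phi^\dag)$ is the real content, and this is where the continuity of $\phi$ enters. Fix $a\in S(\phi)$; the case $a=r_b$ is covered above, so assume $a<r_b$. If $\phi(s_0)>e$ held for some $s_0\in[a,r_b]$, continuity would force $\phi>e$ on a relative neighborhood of $s_0$ in $[a,r_b]$, a set of positive measure, contradicting $\phi\leq e$ a.e. on $[a,r_b]$; hence in fact $\phi(s)\leq e$ for \emph{every} $s\in[a,r_b]$. Then for each $r\in[a,r_b]$ one has $[r,r_b]\subseteq[a,r_b]$, so by the very definition of the running maximum $\phi^\dag(r)=\max_{r'\in[r,r_b]}\phi(r')\leq e$. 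Thus $\phi^\dag\leq e$ everywhere on $[a,r_b]$, in particular a.e., which gives $a\in S(\phi^\dag)$.

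Combining the two inclusions gives $S(\phi)=S(\phi^\dag)$ and therefore $\widetilde{\rho}[\phi](e)=\widetilde{\rho}[\phi^\dag](e)$. The only delicate step is the upgrade of the almost-everywhere bound on $\phi$ to a pointwise bound, and I expect this to be the single point that genuinely requires the continuity hypothesis: because $\phi^\dag$ is built from a $\max$ rather than an essential supremum, it is sensitive to the pointwise values of $\phi$, and the statement would fail for a merely measurable $\phi$ that exceeds $e$ on a set of measure zero.
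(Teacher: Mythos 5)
Your proof is correct, and it takes a genuinely different and more economical route than the paper's. The paper first silently replaces the ``a.e.'' condition in the definition of $\widetilde{\rho}$ by a pointwise one, splits into the cases $e\geq\max\phi$ and $e<\max\phi$, gets the easy inequality $\widetilde{\rho}[\phi](e)\leq\widetilde{\rho}[\phi^\dag](e)$ from $\phi\leq\phi^\dag$, and then obtains the reverse inequality by contradiction: it characterizes $\widetilde{\rho}[\phi](e)$ by the two properties ``$\phi\leq e$ to the right'' and ``$\phi>e$ just to the left'', deduces $\phi\big(\widetilde{\rho}[\phi](e)\big)=e$ and $\phi^\dag\big(\widetilde{\rho}[\phi](e)\big)=\phi\big(\widetilde{\rho}[\phi](e)\big)$, and uses the monotonicity of $\phi^\dag$ to reach a contradiction. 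You instead prove that the two sets over which the infima are taken coincide, which gives both inequalities at once: the inclusion $S(\phi^\dag)\subseteq S(\phi)$ is immediate from $\phi\leq\phi^\dag$, and the reverse inclusion follows from the a.e.-to-everywhere upgrade (the only place continuity is needed, as you correctly flag) together with the observation that for $r\geq a$ the running maximum $\phi^\dag(r)=\max_{[r,r_b]}\phi$ only sees values of $\phi$ on $[a,r_b]$. Your argument is shorter, avoids the case split and the contradiction, makes explicit the passage from ``a.e.'' to ``everywhere'' that the paper glosses over, and isolates cleanly why continuity is essential; the paper's argument, in exchange, records the identity $\phi\big(\widetilde{\rho}[\phi](e)\big)=e$ for an interior barrier position, which is of independent interpretive value but is not needed for the lemma itself.
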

\begin{proof}
To start with, we recall that
$$ \widetilde{\rho}[\phi](e)=\min\big\{a\in[1,r_b]:\forall\,s\geq a,\,\phi(s)\leq e\big\}.$$
We point out that if $e\geq max\,\phi$ then, \begin{equation}\label{biche} \{a\in[1,r_b]:\forall\,s\geq a,\,\phi(s)\leq e\big\}=[1,r_b],\end{equation}
so that we have $\widetilde{\rho}[\phi](e)=1.$ In this situation we also have $e\geq max\,\phi=\phi^\dag(1)\geq\phi^\dag(r)$, where the last inequality is given by the monotony of $\phi^\dag$. Therefore~\eqref{biche} also hold for $\phi^\dag$ and then $\widetilde{\rho}[\phi^\dag](e)=1.$

We now focus on the case $e<max\,\phi$. This implies that $\widetilde{\rho}[\phi](e)>1$. For this case, we first observe that, since $\phi\leq\phi^\dag$, by definition of $\widetilde{\rho}$,
\begin{equation}\label{caribou}
    \widetilde{\rho}[\phi](e)\leq \widetilde{\rho}[\phi^\dag](e).
\end{equation}
For the reverse inequality, we start by observing that (by continuity of $\phi$) the definition of $\widetilde{\rho}$ is equivalent to the two following propositions:
\begin{equation}\label{renne}
    \forall\,r\geq\widetilde{\rho}[\phi](e),\qquad e\geq\phi(r),
\end{equation}
and
\begin{equation}\label{orignal}
    \exists\,\delta>0,\;\forall\,r\in\big[\widetilde{\rho}[\phi](e)-\delta;\widetilde{\rho}[\phi](e)\big],\qquad\phi(r)>e.
\end{equation}
Indeed,~\eqref{renne} holds for all the elements of the set $\{a\in[1,r_b]:\forall\,s\geq a,\,\phi(s)\leq e\big\}$ while~\eqref{orignal} characterizes the fact that $\widetilde{\rho}[\phi](e)$ is the smallest element of this set. By continuity and since $\widetilde{\rho}[\phi](e)>1$, Equations~\eqref{renne} and~\eqref{orignal} gives that,
\begin{equation}\label{faon}
    \phi\big(\widetilde{\rho}[\phi](e)\big)=e.
\end{equation}
Equations~\eqref{renne} and~\eqref{faon} together imply
\begin{equation*}
    \max_{r\geq\widetilde{\rho}[\phi](e)}\phi(r)=\phi\big(\widetilde{\rho}[\phi](e)\big).
\end{equation*}
Thus,
\begin{equation}\label{cerf}
    \phi^\dag\big(\widetilde{\rho}[\phi](e)\big) =\phi\big(\widetilde{\rho}[\phi](e)\big).
\end{equation}
On the other hand, since $\phi^\dag$ is non-increasing, Equation~\eqref{caribou} implies
\begin{equation}\label{chevreil}
    \phi^\dag\big(\widetilde{\rho}[\phi](e)\big)\geq\phi^\dag\big(\widetilde{\rho}[\phi^\dag](e)\big).
\end{equation}
Suppose now by the absurd that $\widetilde{\rho}[\phi^\dag](e)>\widetilde{\rho}[\phi](e)$, then~\eqref{orignal} and~\eqref{chevreil} (since $\phi^\dag$ is non-increasing) give
\begin{equation*}
\phi^\dag\big(\widetilde{\rho}[\phi](e)\big)>\phi^\dag\big(\widetilde{\rho}[\phi^\dag](e)\big)
\end{equation*}
This last inequation with~\eqref{faon} and~\eqref{cerf} lead to
\begin{equation*}
    e=\phi\big(\widetilde{\rho}[\phi](e)\big)=\phi^\dag\big(\widetilde{\rho}[\phi](e)\big)>\phi^\dag\big(\widetilde{\rho}[\phi^\dag](e)\big)=e,
\end{equation*}
which is eventually contradictory.
\end{proof}

We have also the following continuity property for the $\dag$ application:
\begin{lemma}[Application $\dag$ is Lipschitz]\label{lem:dag lip}
Let $\phi$ and $\psi$ be two continuous functions on $[1,r_b]$. We have
\begin{equation}\label{eq:dag lip}
    \|\phi^\dag-\psi^\dag\|_{L^\infty}\leq\|\phi-\psi\|_{L^\infty}.
\end{equation}
\end{lemma}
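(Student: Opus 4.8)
The plan is to exploit the fact that a pointwise supremum of functions is $1$-Lipschitz with respect to the uniform norm, i.e. that $\big|\sup_i a_i-\sup_i b_i\big|\leq\sup_i|a_i-b_i|$. Since $\phi^\dag(r)=\max_{r'\in[r,r_b]}\phi(r')$ is precisely such a supremum (taken over the sliding interval $[r,r_b]$), the estimate~\eqref{eq:dag lip} should follow directly, with no compactness or continuity subtleties beyond the fact that the maxima are attained, which is already guaranteed since $\phi$ and $\psi$ are continuous on the compact interval $[1,r_b]$.

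Concretely, I would fix $r\in[1,r_b]$ and estimate $\phi^\dag(r)$ from above. For every $r'\in[r,r_b]$ one writes
\begin{equation*}
    \phi(r')=\psi(r')+\big(\phi(r')-\psi(r')\big)\leq\psi(r')+\|\phi-\psi\|_{L^\infty}\leq\psi^\dag(r)+\|\phi-\psi\|_{L^\infty},
\end{equation*}
where the last inequality uses that $\psi(r')\leq\max_{s\in[r,r_b]}\psi(s)=\psi^\dag(r)$ because $r'\in[r,r_b]$. Taking the maximum over $r'\in[r,r_b]$ on the left-hand side yields
\begin{equation*}
    \phi^\dag(r)\leq\psi^\dag(r)+\|\phi-\psi\|_{L^\infty}.
\end{equation*}
Exchanging the roles of $\phi$ and $\psi$ gives the reverse inequality, so that $|\phi^\dag(r)-\psi^\dag(r)|\leq\|\phi-\psi\|_{L^\infty}$ for each $r\in[1,r_b]$. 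Passing to the supremum over $r$ then produces the claimed bound~\eqref{eq:dag lip}.

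I do not anticipate a genuine obstacle here: the only point requiring a word of care is that both maxima defining $\phi^\dag(r)$ and $\psi^\dag(r)$ are evaluated over the \emph{same} interval $[r,r_b]$, which is exactly what makes the comparison $\psi(r')\leq\psi^\dag(r)$ legitimate for the same $r'$ realizing $\phi^\dag(r)$. The continuity hypothesis serves only to ensure that $\phi^\dag$ and $\psi^\dag$ are well defined (the maxima exist); the Lipschitz bound itself is purely order-theoretic and would hold for the supremum of any bounded functions.
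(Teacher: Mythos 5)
Your proof is correct and follows essentially the same route as the paper: the paper invokes the inequality $\big|\max_{y\in[r,r_b]}\phi(y)-\max_{y\in[r,r_b]}\psi(y)\big|\leq\max_{y\in[r,r_b]}|\phi(y)-\psi(y)|$ in one line, and you simply supply the standard two-sided justification of that same inequality before taking the supremum in $r$.
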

\begin{proof}
Let $r\in[1,r_b]$, we have
\begin{equation*}
    |\phi^\dag(r)-\psi^\dag(r)|=\Big|\max_{y\in[r,r_b]}\phi(y)-\max_{y\in[r,r_b]}\psi(y)\Big|\leq\max_{y\in[r,r_b]}|\phi(y)-\psi(y)|\leq\|\phi-\psi\|_{L^\infty}.
\end{equation*}
taking the $max$ at the left-hand side above gives~\eqref{eq:dag lip}.
\end{proof}
We are now in position to give the convergence result for the non-linearity $\widetilde{\rho}$:
\begin{lemma}[Convergence property for $\widetilde{\rho}$]\label{lem:tilde rho cv}
Let $(\phi_n)$ be a sequence of continuous functions that is uniformly converging towards $\phi$. Then for almost every $e\in\RR$,
\begin{equation*}
    \widetilde{\rho}[\phi_n](e)\;\longrightarrow\;\widetilde{\rho}[\phi](e).
\end{equation*}
\end{lemma}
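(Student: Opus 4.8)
The plan is to reduce the statement to the case of non-increasing functions, for which $\widetilde{\rho}$ becomes an honest generalized inverse, and then to exploit monotonicity. First I would apply Lemma~\ref{lem:tilde rho dag} to replace $\phi_n$ and $\phi$ by $\phi_n^\dag$ and $\phi^\dag$: since $\widetilde{\rho}[\phi_n](e)=\widetilde{\rho}[\phi_n^\dag](e)$ and $\widetilde{\rho}[\phi](e)=\widetilde{\rho}[\phi^\dag](e)$, it suffices to prove $\widetilde{\rho}[\phi_n^\dag](e)\to\widetilde{\rho}[\phi^\dag](e)$. By Lemma~\ref{lem:dag lip}, the hypothesis $\|\phi_n-\phi\|_{L^\infty}\to0$ yields $\|\phi_n^\dag-\phi^\dag\|_{L^\infty}\to0$, and $\phi^\dag$ inherits continuity from $\phi$. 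After relabelling I am thus reduced to the following clean situation: $(\psi_n)$ and $\psi$ are continuous and \emph{non-increasing} on $[1,r_b]$, $\delta_n:=\|\psi_n-\psi\|_{L^\infty}\to0$, and I must show $\widetilde{\rho}[\psi_n](e)\to\widetilde{\rho}[\psi](e)$ for almost every $e$.

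Second, I would record that for a continuous non-increasing $\psi$ the definition~\eqref{def:tilde rho} simplifies, because $\sup_{s\ge a}\psi(s)=\psi(a)$. Setting $R_\psi(e):=\widetilde{\rho}[\psi](e)$, one has $R_\psi(e)=\min\big(\{a\in[1,r_b):\psi(a)\le e\}\cup\{r_b\}\big)$, the value $r_b$ being always admissible by the almost-everywhere convention. This expression shows at once that $e\mapsto R_\psi(e)$ is non-increasing, since enlarging $e$ enlarges the sublevel set $\{\psi\le e\}$ and hence lowers its infimum. A monotone function has at most countably many discontinuities, so I would take the exceptional null set to be precisely the countable set $\cD$ of discontinuity points of $R_\psi$.

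Third, the heart of the matter is a squeeze at the continuity points. Fix $e_0\notin\cD$. From $\psi-\delta_n\le\psi_n\le\psi+\delta_n$ I claim that
\begin{equation*}
    R_\psi(e_0+\delta_n)\;\le\;R_{\psi_n}(e_0)\;\le\;R_\psi(e_0-\delta_n).
\end{equation*}
For the right inequality, if $a^\star$ realises $R_\psi(e_0-\delta_n)$ then $\psi(a^\star)\le e_0-\delta_n$, hence $\psi_n(a^\star)\le\psi(a^\star)+\delta_n\le e_0$, so $a^\star$ is admissible for $\psi_n$ and $R_{\psi_n}(e_0)\le a^\star$. For the left inequality, if $b^\star$ realises $R_{\psi_n}(e_0)$ then $\psi_n(b^\star)\le e_0$, hence $\psi(b^\star)\le\psi_n(b^\star)+\delta_n\le e_0+\delta_n$, so $b^\star$ is admissible for $\psi$ at level $e_0+\delta_n$ and $R_\psi(e_0+\delta_n)\le b^\star$; the convention $R=r_b$ keeps both comparisons valid in the degenerate case of an empty sublevel set. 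Letting $n\to\infty$, monotonicity gives $R_\psi(e_0-\delta_n)\to R_\psi(e_0^-)$ and $R_\psi(e_0+\delta_n)\to R_\psi(e_0^+)$, and since $e_0$ is a continuity point both one-sided limits equal $R_\psi(e_0)$. The squeeze then forces $R_{\psi_n}(e_0)\to R_\psi(e_0)$, which is the desired conclusion for every $e_0$ outside the countable set $\cD$.

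I expect the only genuinely delicate point to be the bookkeeping at the continuity points and at the boundary. One must recognise that $e_0$ being a continuity point of the monotone function $R_\psi$ is exactly what prevents the generalized inverse from jumping (this is the situation where $\psi$ has no flat plateau at level $e_0$, so that a plateau would otherwise produce a genuine failure of convergence and explains why only an almost-everywhere statement can hold), and one must carefully track the convention $R_\psi=r_b$ on the levels below $\min\psi$ so that the two one-sided inequalities of the squeeze survive the degenerate cases. Everything else follows directly from uniform convergence and monotonicity.
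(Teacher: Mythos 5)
Your proof is correct. The reduction to non-increasing functions via Lemmas~\ref{lem:tilde rho dag} and~\ref{lem:dag lip} is exactly the paper's first move, and your squeeze inequality $R_\psi(e_0+\delta_n)\le R_{\psi_n}(e_0)\le R_\psi(e_0-\delta_n)$ is a clean quantitative version of the paper's two one-sided bounds $\inf\{r:\phi^\dag(r)=e\}\le\liminf_n\widetilde{\rho}[\phi_n](e)$ and $\sup\{r:\phi^\dag(r)=e\}\ge\limsup_n\widetilde{\rho}[\phi_n](e)$. Where you genuinely diverge is in the identification and handling of the exceptional set: the paper works at the level of the function $\phi^\dag$ itself, declaring a level $e$ bad when $\meas\{r:\phi^\dag(r)=e\}>0$ (a plateau), and proves this set is null via the layer-cake representation (a Fubini argument); you instead work at the level of the generalized inverse $e\mapsto R_\psi(e)$, observe it is monotone, and take the bad set to be its at most countable set of discontinuities. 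For a continuous non-increasing $\psi$ these two bad sets coincide (a jump of the generalized inverse at $e_0$ is precisely a plateau of $\psi$ at height $e_0$), but your route replaces the measure-theoretic layer-cake lemma by the elementary fact that a monotone function has countably many discontinuities, which is both more elementary and gives the slightly sharper information that the exceptional set is countable rather than merely null. The only bookkeeping points to watch — the convention that $a=r_b$ is always admissible in~\eqref{def:tilde rho} because $[r_b,r_b]$ is a null set, and the degenerate cases of empty sublevel sets in the sandwich — are exactly the ones you flag, and you handle them correctly.
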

\begin{proof}
Since we have $\phi_n\to\phi$ in $L^\infty$, then by Lemma~\ref{lem:dag lip} we have $\phi_n^\dag\to\phi^\dag$ in $L^\infty.$ 
Let $e\in\RR$, suppose that there exists $r\in[1,r_b]$ such that $\phi^\dag(r)>e$.
By uniform convergence, there exists $\delta>0$ such that for all $n\in\NN$ large enough: $\phi_n^\dag(r)\geq e+\delta.$ By definition of $\widetilde{\rho}$, we deduce that $r\leq\widetilde{\rho}[\phi_n^\dag](e)$.
In the view of Lemma~\ref{lem:tilde rho dag}, this gives $r\leq\widetilde{\rho}[\phi_n](e)$.
By taking the $lim\,inf$ we conclude:
\begin{equation*}
    \phi^\dag(r)>e\quad\Longrightarrow\quad r\leq \liminf_{n\to+\infty}\widetilde{\rho}[\phi_n](e).
\end{equation*}
Thus, with $\phi^\dag$ being non-increasing,
\begin{equation*}
    \inf\,\{r\in[1,r_b]:\phi^\dag(r)=e\}\leq\liminf_{n\to+\infty}\widetilde{\rho}[\phi_n](e).
\end{equation*}
Similarly,
\begin{equation*}
    \sup\,\{r\in[1,r_b]:\phi^\dag(r)=e\}\geq\limsup_{n\to+\infty}\widetilde{\rho}[\phi_n](e)
\end{equation*}
Since $\phi^\dag$ is non-increasing, if we have $meas\big\{r\in[1,r_b]:\phi^\dag(r)=e\big\}=0$ then this set is a singleton. 
In this case, the two estimates above give the convergence of $\widetilde{\rho}[\phi_n](e)$.

We now remark the following general fact: if $f:\RR^d\to\RR$ is a measurable function, then the set of $y\in\RR$ such that $meas\{x\in\RR^d:f(x)=y\}>0$ is a set of measure $0$. Indeed, using the layer-cake representation~\cite[chap.1]{Lieb_Loss_2001} (direct corollary of the Fubini theorem),
\begin{equation}
    \begin{split}
        0&=\int_{\RR^d}0\,dx=\int_{\RR^d}\meas\,\big\{y\in\RR:f(x)=y\big\}\,dx\\
        &=\int_{\RR^d\times\RR}\mathbbm{1}_{\{(x,y)\in\RR^d\times\RR\,:\,f(x)=y\}}\,dx\,dy\\
        &=\int_\RR\meas\big\{x\in\RR^d:f(x)=y\big\}\,dy.
    \end{split}
\end{equation}
From this we conclude that the set of $e\in\RR$ such that $meas\big\{r\in[1,r_b]:\phi^\dag(r)=e\big\}>0$ has indeed its measure equal to $0$ and therefore the announced convergence holds for almost every $e\in\RR$.
\end{proof}

\begin{corollary}\label{coro:tilde rho cv}
For almost every $(w,L)\in\RR^2$,
\begin{equation}\label{eq:tilde rho U_L cv}
    \widetilde{\rho}\bigg[\phi_n+\frac{L^2}{2\,\cdot\,^2}\bigg]\bigg(\frac{w^2}{2}+\frac{L^2}{2\,r_b^2}\bigg)\;\longrightarrow\;\widetilde{\rho}\bigg[\phi+\frac{L^2}{2\,\cdot\,^2}\bigg]\bigg(\frac{w^2}{2}+\frac{L^2}{2\,r_b^2}\bigg).
\end{equation}

\end{corollary}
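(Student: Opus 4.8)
The plan is to reduce this two-dimensional statement to the one-dimensional convergence of Lemma~\ref{lem:tilde rho cv} by freezing the angular momentum $L$, and then to recombine the slices through a Fubini argument. First I would fix $L\in\RR$ and apply Lemma~\ref{lem:tilde rho cv} to the sequence of continuous functions $r\mapsto\phi_n(r)+\frac{L^2}{2r^2}$ on $[1,r_b]$. The observation making this legitimate is that the added term $\frac{L^2}{2r^2}$ is independent of $n$, so it cancels in the difference and the uniform convergence is simply inherited from $\phi_n\to\phi$:
\begin{equation*}
    \Big\|\big(\phi_n+\tfrac{L^2}{2\,\cdot\,^2}\big)-\big(\phi+\tfrac{L^2}{2\,\cdot\,^2}\big)\Big\|_{L^\infty}=\|\phi_n-\phi\|_{L^\infty}\longrightarrow 0.
\end{equation*}
Lemma~\ref{lem:tilde rho cv} then produces, for this fixed $L$, a Lebesgue-null set $N_L\subset\RR$ of energies outside of which $\widetilde{\rho}[\phi_n+\frac{L^2}{2\,\cdot\,^2}](e)\to\widetilde{\rho}[\phi+\frac{L^2}{2\,\cdot\,^2}](e)$.

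Next I would transport this convergence from the energy variable to the velocity variable via the substitution $e=\frac{w^2}{2}+\frac{L^2}{2r_b^2}$, which is exactly the argument appearing in~\eqref{eq:tilde rho U_L cv}. For fixed $L$ the map $w\mapsto\frac{w^2}{2}+\frac{L^2}{2r_b^2}$ is a $C^1$ diffeomorphism on each of the half-lines $(-\infty,0)$ and $(0,+\infty)$, so it pulls the null set $N_L$ back to a null set of $w$'s, the single exceptional point $w=0$ being itself null. Hence for each fixed $L$ the convergence~\eqref{eq:tilde rho U_L cv} holds for almost every $w\in\RR$; in other words, writing $B\subset\RR^2$ for the set of $(w,L)$ where~\eqref{eq:tilde rho U_L cv} fails, every vertical slice $B_L:=\{w:(w,L)\in B\}$ has one-dimensional measure zero.

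It then remains to upgrade ``\,every slice is null\," to ``\,$B$ is null\,", which is a direct application of Fubini--Tonelli: granting that $B$ is measurable,
\begin{equation*}
    \meas(B)=\int_\RR\meas(B_L)\,dL=0,
\end{equation*}
which is the assertion of the corollary. The only genuinely delicate point is this measurability of $B$, and I expect it to be the main obstacle. To settle it I would show that each of the functions $h_n(w,L):=\widetilde{\rho}[\phi_n+\frac{L^2}{2\,\cdot\,^2}](\frac{w^2}{2}+\frac{L^2}{2r_b^2})$, as well as their limit candidate $h$, is jointly measurable, after which $B=\{(w,L):h_n(w,L)\not\to h(w,L)\}$ is measurable as the nonconvergence set of a sequence of measurable functions. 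For the joint measurability, I would invoke Lemma~\ref{lem:tilde rho dag} to rewrite $\widetilde{\rho}[\psi](e)=\inf\{a\in[1,r_b]:\psi^\dag(a)\le e\}$, and note that $\Psi_n(a,L):=(\phi_n+\frac{L^2}{2\,\cdot\,^2})^\dag(a)=\max_{r\in[a,r_b]}(\phi_n(r)+\frac{L^2}{2r^2})$ is jointly continuous in $(a,L)$ and non-increasing in $a$; consequently, for $t\in[1,r_b]$ the sublevel sets satisfy $\{h_n\le t\}=\{(w,L):\Psi_n(t,L)\le\frac{w^2}{2}+\frac{L^2}{2r_b^2}\}$, which is closed and hence measurable, giving the required joint measurability.
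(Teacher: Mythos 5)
Your proof is correct and follows essentially the same route as the paper: freeze $L$, apply Lemma~\ref{lem:tilde rho cv} to the uniformly convergent shifted potentials $\phi_n+\frac{L^2}{2\,\cdot\,^2}$, transfer the exceptional null set from the energy variable to $w$, and conclude with Fubini. You additionally justify the pull-back of the null set through $e=\frac{w^2}{2}+\frac{L^2}{2r_b^2}$ and the joint measurability of the non-convergence set, two points the paper's one-line proof leaves implicit.
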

\begin{proof}
Let $L\in\RR$ be fixed. If $\phi_n\to\phi$ in $L^\infty([1,r_b])$ then $\phi_n+\frac{L^2}{2\,\cdot^2}$ converges in $L^\infty$ to $\phi+\frac{L^2}{2\,\cdot^2}$. As a consequence of the previous lemma, the set of $w\in\RR$ such that~\eqref{eq:tilde rho U_L cv} does not hold is of measure $0$. Corollary~\ref{coro:tilde rho cv} then follows (using the Fubini theorem).
\end{proof}
Recall that $\phi(r)+\frac{L^2}{2r^2}=U_L(r)$ so that the convergence above is exactly the one needed to adjust the parameter $\mR_i(w,L)$ in the view of~\eqref{eq:adjust parameters}. The arguments are similar for $\mR_e(w,L)$ with $-\phi(r)+\frac{L^2}{2r^2}=V_L(r).$

%%%%%%%%%%%%%%%%%%%%%%%%%%%%%%%%%%%%%%%%%%%%%%%%%%%%%%%%%%%%%%%%
\subsubsection{Passing to the limit with the parameters}
We can now consider passing to the limit with barrier-parameters and obtain~\eqref{eq:adjust parameters}. 
For that purpose, we suppose that the functions $f_i^b$ and $f_e^b$ are in $L^1\cap L^1_L(L^\infty_w(w\,dw))$ and also in $L^1_w(L^\infty_L\,;dw/|w|^\gamma)$ for some $0<\gamma<1$.

We also have to adjust the max-parameters to obtain
\begin{equation*}
    \mU_L=\overline{U_L}:=\max_{r\in[1,r_b]}\phi(r)+\frac{L^2}{2\,r^2},\qquad\text{and}\qquad\mV_L=\overline{V_L}:=\max_{r\in[1,r_b]}-\phi(r)+\frac{L^2}{2\,r^2}.
\end{equation*}
For that purpose, we proceed with an iterative fixed-point argument.
We construct sequences of parameters $(\mR_i^n(w,L))_{n\in\NN}$, $(\mR_e^n(w,L))_{n\in\NN},$ $(\mU_L^n)_{n\in\NN}$ and $(\mV_L^n(w,L))_{n\in\NN}$, a sequence of functions $g_n:\RR\times[0,1]\to\RR$, a sequence $\psi_n:[0,1]\to\RR$ and a sequence $\phi_n:[1,r_b]\to\RR$ as follows.
The first element of the sequences can be chosen freely without importance.
Suppose that for $n\in\NN$, we have already built the $n^{th}$ term of the sequences: $\mR_i^n(w,L)$, $\mR_e^n(w,L),$ $\mU_L^n$ and $\mV_L^n(w,L)$, $g_n,$ $\psi_n$ and $\phi_n$. We define for all $(w,L)\in\RR^2$,
\begin{equation}\label{def:mR_i n}
    \mR_i^{n+1}(w,L):=\widetilde{\rho}\bigg[\phi_n+\frac{L^2}{2\,\cdot\,^2}\bigg]\bigg(\frac{w^2}{2}+\frac{L^2}{2\,r_b^2}\bigg),
\end{equation}
\begin{equation}\label{def:mR_e n}
    \mR_e^{n+1}(w,L):=\widetilde{\rho}\bigg[-\phi_n+\frac{L^2}{2\,\cdot\,^2}\bigg]\bigg(\frac{w^2}{2}+\frac{L^2}{2\,r_b^2}\bigg),
\end{equation}
\begin{equation}\label{def:mU_L n}
    \mU_L^{n+1}:=\overline{U_L^n}=\max_{r\in[1,r_b]}\phi_n(r)+\frac{L^2}{2\,r^2},
\end{equation}
\begin{equation}\label{def:mV_L n}
    \mV_n^{n+1}:=\overline{V_L^n}=\max_{r\in[1,r_b]}-\phi_n(r)+\frac{L^2}{2\,r^2}.
\end{equation}
We now define $g_{n+1}:\RR\times[0,1]\to\RR$ using~\eqref{def:g} where the associated function $\widetilde{g}_{n+1}$ is defined by~\eqref{def:tilde g}\eqref{def:g_i}\eqref{def:g_e} with parameters $\mU_L^{n+1}$, $\mV_L^{n+1}$, $\mR_i^{n+1}(w,L)$ and $\mR_e^{n+1}(w,L)$. 
We now define the function $\psi_{n+1}:[0,1]\to\RR$ as being a minimizer on $H^1_0([0,1])$ of $\cJ$ defined at~\eqref{def:cJ} with function $G=G_{n+1}$ defined by $\int_0^xg_{n+1}(\nu,x')\,dx'$.
Such a minimizer exists and is solution to~\eqref{eq:equivalent equation}, as stated by Lemma~\ref{lem:existence of a max}. 

Note that there may exist infinitely many maximizers so that this step of the proof requires the axiom of choice.
From $\psi_{n+1}$ we define $\phi_{n+1}$ with~\eqref{eq:psi to phi} and $\phi_{n+1}$ is a solution to~\eqref{eq:studied equation} with function $\widetilde{g}_{n+1}$.

The sequences being well-defined, we study their limit.
The fact that $\psi_n$ satisfy~\eqref{eq:equivalent equation} implies in particular that for all $n\in\NN$,
\begin{equation}\label{eq:the bound}\begin{split}
&\qquad\bigg\|\frac{d^2}{dx^2}\psi_n\bigg\|_{L^\infty}\leq\|g_n\|_{L^\infty}\\&\leq C\,\sup_{\nu,r}\int_{\RR^2}\Gamma(\nu,r,w,L)f_i^b\bigg(w,\frac{L}{r_b}\bigg)\,dw\,dL+C\,\sup_{\nu,r}\int_{\RR^2}\Gamma\Big(\!-\nu,r,w,L\Big)f_e^b\bigg(w,\frac{L}{r_b}\bigg)\,dw\,dL\\
&\leq C\big(\|f_i^b\|_{L^1}+\|f_e^b\|_{L^1}+\|f_i^b\|_{L^1_L(L^\infty_w(w\,dw))}+\|f_e^b\|_{L^1_L(L^\infty_w(w\,dw))}\big)
\end{split}
\end{equation}
where the last estimate is given by Lemma~\ref{lem:well defined g_i g_e}.
In particular, $d^2\psi_n/dx^2$ is a bounded sequence in $L^\infty$.
By compact embedding, we obtain that, up to an omitted extraction of subsequence, the function $\psi_n$ converges in $H^1_0$ towards some function $\psi^\star$.
As a consequence, $\phi_n$ converges towards $\phi^\star$ where $\phi^\star$ is deduced from $\psi^\star$ with~\eqref{eq:psi to phi}.
By Sobolev embedding, the convergence of $\phi_n$ also takes place in $L^\infty$ and therefore Corollary~\ref{coro:tilde rho cv} implies
\begin{equation*}
    \widetilde{\rho}\bigg[\phi_n+\frac{L^2}{2\,\cdot\,^2}\bigg]\bigg(\frac{w^2}{2}+\frac{L^2}{2\,r_b^2}\bigg)\;\longrightarrow\;\widetilde{\rho}\bigg[\phi^\star+\frac{L^2}{2\,\cdot\,^2}\bigg]\bigg(\frac{w^2}{2}+\frac{L^2}{2\,r_b^2}\bigg)
\end{equation*}
for almost every $(w,L)\in\RR^2$. As a consequence of~\eqref{def:mR_i n}, we also have $\mR_i^{n}(w,L)$ converging for almost every $(w,L)\in\RR^2$ towards a limit $\mR_i^\star(w,L)$ and
\begin{equation*}
       \mR_i^\star(w,L)=\widetilde{\rho}\bigg[\phi^\star+\frac{L^2}{2\,\cdot\,^2}\bigg]\bigg(\frac{w^2}{2}+\frac{L^2}{2\,r_b^2}\bigg). 
\end{equation*}
Similarly,
\begin{equation*}
        \mR_e^n(w,L)\;\longrightarrow\;\mR_e^\star(w,L)=\widetilde{\rho}\bigg[-\phi^\star+\frac{L^2}{2\,\cdot\,^2}\bigg]\bigg(\frac{w^2}{2}+\frac{L^2}{2\,r_b^2}\bigg).
\end{equation*}
For almost every $(w,L)\in\RR^2$. Concerning the convergence of the max-parameters, we write
\begin{equation*}\begin{split}
    &\big|\overline{U_L^\star}-\overline{U_L^n}\big|=\bigg|\bigg(\max_{r\in[1,r_b]}\phi^\star(r)+\frac{L^2}{2\,r^2}\bigg)-\bigg(\max_{r\in[1,r_b]}\phi_n(r)+\frac{L^2}{2\,r^2}\bigg)\bigg|\\
    &\leq\max_{r\in[1,r_b]}\bigg|\bigg(\phi^\star(r)+\frac{L^2}{2\,r^2}\bigg)-\bigg(\phi_n(r)+\frac{L^2}{2\,r^2}\bigg)\bigg|=\|\phi^\star-\phi_n\|_{L^\infty}.
\end{split}
\end{equation*}
Thus, the convergence of $\phi_n$ towards $\phi^\star$ in $L^\infty$ implies the convergence of $\overline{U_L^n}$ to $\overline{U_L^\star}$ for all $L\in\RR$.
Using~\eqref{def:mU_L n}, we get
\begin{equation*}
    \mU_L^n\;\longrightarrow\; \mU_L^\star:=\overline{U_L^\star}=\max_{r\in[1,r_b]}\phi^\star(r)+\frac{L^2}{2\,r^2}.
\end{equation*}
A similar reasoning with~\eqref{def:mV_L n} gives the analogous result for $\mV_L^n$.

We now define $g^\star$ with~\eqref{def:g} where the chosen parameters are $\mU_L^\star$, $\mV_L^\star$, $\mR_i^\star(w,L)$ and $\mR_e^\star(w,L)$.
The Lebesgue dominated convergence theorem gives that for all $\nu,x$ we have $g_n(\nu,x)$ converging towards $g^\star(\nu,x)$. Invoking now Lemma~\ref{lem:Holder regularity}, we get that the family of functions $(\nu\mapsto g_n(\nu,x))_{n\in\NN}$ is uniformly equi-continuous for every fixed $x\in[0,1)$. Therefore, by Arzelà-Ascoli theorem, we have for all $x\in[0,1)$,
\begin{equation*}
    \sup_\nu\big|g_n(\nu,x)-g^\star(\nu,x)\big|\longrightarrow0,\quad\text{as }n\to+\infty.
\end{equation*}
Thus,
\begin{equation*}
    \forall\,x\in[0,1),\qquad g_n(\psi_n(x),x)\longrightarrow g^\star(\psi^\star(x),x).
\end{equation*}
Using again the bound~\eqref{eq:the bound}, we get that the convergence above also takes place in $L^2$.
Therefore, with the equation~\eqref{eq:equivalent equation}, we deduce that $d^2\psi_n/dx^2$ converges strongly in $L^2$ towards $d^2\psi^\star/dx^2$ and the following equality holds:
\begin{equation*}
     \forall\,x\in[0,1),\qquad -\frac{d^2\psi^\star}{dx^2}(x)=g^\star\Big(\psi^\star(x),\,x\Big).
\end{equation*}
Thus, $\psi^\star$ is solution to~\eqref{eq:equivalent equation} with function $g^\star$ and $\phi^\star$ is solution to~\eqref{eq:studied equation} with function $\widetilde{g}^\star$. 
Since the convergence of $(\psi_n)$ towards $\psi^\star$ takes place in $H^1_0$, the Dirichlet boundary conditions for $\psi^\star$ are satisfied and so is the case for $\phi^\star$.

\begin{corollary}
The Langmuir problem written in term of Poisson equation~\eqref{eq:Poisson studied} admits a solution and therefore the initial Langmuir-Vlasov-Poisson problem~\eqref{VPBVP} admits a weak-strong solution in the sense given by Definition~\ref{def_sol}.
\end{corollary}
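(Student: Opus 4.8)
The plan is to collect the pieces assembled in the preceding fixed-point construction and to feed the limiting potential $\phi^\star$ back into the explicit Vlasov solutions. The running argument has already produced $\psi^\star\in H^1_0([0,1])$ solving \eqref{eq:equivalent equation} with the self-consistent nonlinearity $g^\star$, together with the uniform bound \eqref{eq:the bound} showing that $d^2\psi_n/dx^2$ is bounded in $L^\infty$; since moreover $d^2\psi_n/dx^2$ converges strongly in $L^2$ to $d^2\psi^\star/dx^2$, the $L^\infty$ bound passes to the limit and $\psi^\star\in W^{2,\infty}([0,1])$. As the change of variables \eqref{eq:psi to phi} is smooth on $[1,r_b]$, it transfers this regularity to $\phi^\star\in W^{2,\infty}(1,r_b)$, which is exactly the regularity demanded by Definition \ref{def_sol}.

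The decisive point is to check that the limiting parameters are self-consistent, so that $g^\star$ is genuinely the nonlocal right-hand side evaluated at $\phi^\star$ itself and not at frozen data. Writing $U_L^\star(r)=\phi^\star(r)+L^2/(2r^2)$ and $V_L^\star(r)=-\phi^\star(r)+L^2/(2r^2)$, Corollary \ref{coro:tilde rho cv} gives the a.e. convergence of the barrier parameters $\mathfrak{R}_i^n,\mathfrak{R}_e^n$ towards $\widetilde{\rho}[U_L^\star]$ and $\widetilde{\rho}[V_L^\star]$ (using Lemma \ref{lem:dag lip} through the $\dag$-map), while the $1$-Lipschitz dependence of the maximum on the sup-norm gives $\mathfrak{U}_L^n\to\overline{U_L^\star}$ and $\mathfrak{V}_L^n\to\overline{V_L^\star}$ for every $L$. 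Hence the self-consistency relations \eqref{eq:adjust parameters} hold at the limit, and substituting these values into \eqref{def:g_i}--\eqref{def:g_e} makes $g_i,g_e$ coincide with the genuine densities \eqref{def:n_i new}, \eqref{def:n_e new}. Therefore $\phi^\star$ solves the original nonlocal equation \eqref{eq:Poisson studied} pointwise on $[1,r_b]$, with the Dirichlet conditions $\phi^\star(1)=\phi_p$ and $\phi^\star(r_b)=0$ coming from the $H^1_0$ convergence. This already settles the Poisson part of the statement.

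It then remains to produce the kinetic unknowns. I would define $f_i$ and $f_e$ by the closed formulas \eqref{def_fi} and \eqref{def_fe}, built from the effective potentials $U_L^\star,V_L^\star$ attached to $\phi^\star$. Since $\phi^\star\in W^{2,\infty}$ the characteristics are well defined, so Proposition \ref{prop:weak_sol} and its electronic counterpart guarantee that $f_i,f_e$ are weak solutions of the Vlasov equations carrying the prescribed incoming data $f_i^b,f_e^b$ (the probe condition being encoded by setting the distributions to zero on closed and probe-originating characteristics). They are non-negative, and integrable because Lemma \ref{lem:well defined g_i g_e} bounds their macroscopic densities uniformly, so that $\int_Q f_s=\int_1^{r_b}n_s(r)\,dr<+\infty$. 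By Propositions \ref{prop:n_i formula} and \ref{prop:n_e formula} these densities are precisely the $n_i,n_e$ appearing in the Poisson equation solved by $\phi^\star$. The triplet $(f_i,f_e,\phi^\star)$ thus meets all three requirements of Definition \ref{def_sol}, which proves the corollary.

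I expect the genuinely hard step to be the one already isolated in the fixed-point passage to the limit, namely establishing that $g_n(\psi_n(x),x)\to g^\star(\psi^\star(x),x)$: both the argument $\psi_n(x)$ and the function $g_n$ move, so pointwise convergence of $g_n$ (from dominated convergence and the a.e. convergence of $\widetilde{\rho}$) does not suffice on its own. The Hölder regularity of Lemma \ref{lem:Holder regularity} supplies uniform equi-continuity in $\nu$, and Arzelà--Ascoli then upgrades pointwise to local uniform convergence in $\nu$, which is exactly what allows the moving argument $\psi_n(x)$ to be absorbed. Everything downstream of that convergence---the self-consistency of the parameters and the final assembly of the triplet---is comparatively routine verification.
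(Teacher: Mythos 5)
Your proposal is correct and follows essentially the same route as the paper: the corollary is the culmination of the preceding fixed-point passage to the limit, and you assemble exactly the same ingredients (self-consistency of the max- and barrier-parameters via Corollary~\ref{coro:tilde rho cv} and the Lipschitz dependence of the maximum, the equi-continuity from Lemma~\ref{lem:Holder regularity} to handle the moving argument $\psi_n(x)$, and Propositions~\ref{prop:weak_sol} and~\ref{prop:n_i formula}--\ref{prop:n_e formula} to produce the kinetic unknowns from $\phi^\star$). The extra details you make explicit --- the transfer of $W^{2,\infty}$ regularity through~\eqref{eq:psi to phi} and the $L^1(Q)$ integrability of $f_s$ via the uniform density bound of Lemma~\ref{lem:well defined g_i g_e} --- are left implicit in the paper but are consistent with it.
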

~\newline

\noindent{\large\textbf{Acknowledgments}}\vspace{0.2cm}

The authors of this article acknowledge grant support from the project "Multiéchelle et Trefftz pour le transport numérique" (ANR-19-CE46-0004) of the Agence Nationale de la Recherche (France).

\appendix
%%%%%%%%%%%%%%%%%%%%%%%%%%%%%%%%%%%%%%%%%%%%%%
\begin{lemma}[Countability of the locus of left strict local maxima]
\label{countability_max}
Let $f : \RR \rightarrow \RR$ be a function. Let 
\begin{equation} \label{def_A_f}
    A_{f} := \lbrace a \in \RR \: : \exists \delta > 0, \ \forall x \in (a-\delta,a)\: f(x) < f(a) \rbrace.
\end{equation} 
Then $A_{f}$ is at most countable.
\begin{proof}
If $A_{f}$ is empty the conclusion follows. Otherwise, let $a \in A_{f}.$ By definition, there exists $\delta_{a} > 0$ such that for all $ x\in (a - \delta_{a},a),$ $f(x) < f(a).$ It is equivalent to the existence of $n_{a} \in \NN^{*}$ such that for all $ x\in (a - \frac{1}{n_{a}}, a )$, $f(x) < f(a).$ One then considers the map $a \in A_{f} \mapsto n_{a}$. Therefore one has $A_{f} = \underset{n \in \NN^{*}}\bigcup A_{n}$ where $A_{n} := \lbrace a \in A_{f} : n_{a} = n \rbrace.$  Let $n \in \mathbb{N}^{*}$. If $a,a' \in A_{n}$ are such that $a \neq a'$ then necessarily $(a'-a)\textnormal{sgn}(a'-a) \geq \frac{1}{n}.$ Otherwise this would yield that $f(a) < f(a')$ and $f(a)> f(a')$ and one would get a contradiction. Invoking the density of $\QQ$ in $\RR$, for each $a \in A_{n}$ one can choose a  rational number $p_{a}$ such that $a - \frac{1}{2n} < p_{a} <a.$ Then for each $a \neq a'$ the number $p_{a}$ and $p_{a'}$ are distinct because $(a'-a)\textnormal{sgn}(a'-a) \geq \frac{1}{n}.$ Therefore the map $ a \in A_{n} \mapsto p_{a} \in \QQ$ is injective and thus $A_{n}$ is at most countable by countability of $\QQ$. Eventually $A_{f}$ is at most countable as the the union of at most countable sets.
 \end{proof}
\end{lemma}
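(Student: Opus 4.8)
The plan is to prove countability by partitioning $A_f$ into countably many pieces, each of which is \emph{uniformly separated} in $\RR$ (pairwise gaps bounded below by a fixed positive constant) and hence at most countable. Since a countable union of at most countable sets is at most countable, this suffices.

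First I would dispose of the trivial case $A_f=\emptyset$. Otherwise, to each $a\in A_f$ I attach a witness $\delta_a>0$ furnished by the definition~\eqref{def_A_f}, and I \emph{discretize the scale}: choosing an integer $n_a\in\NN^*$ with $1/n_a\le\delta_a$, I write
\[
    A_f=\bigcup_{n\ge1}A_n,\qquad A_n:=\{a\in A_f:\ n_a=n\}.
\]
It then remains only to prove that each bin $A_n$ is at most countable.

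The key step — and the one I expect to be the main obstacle — is the claim that points of a fixed bin $A_n$ are \emph{uniformly separated}: any two distinct $a,a'\in A_n$ satisfy $|a-a'|\ge 1/n$. The idea is that if, say, $a<a'$ with $0<a'-a<1/n\le\delta_{a'}$, then $a$ lies in the left-neighborhood $(a'-\delta_{a'},a')$ on which $f$ stays strictly below $f(a')$, forcing $f(a)<f(a')$; the symmetric use of the defining property at the other point would give the reverse strict inequality, producing two incompatible relations between $f(a)$ and $f(a')$. Verifying that one may legitimately invoke the left-strict-maximum condition at \emph{both} nearby points to reach a contradiction is precisely the delicate part of the argument, and it is where all the care must be concentrated.

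Granting the separation, the conclusion is routine: a subset of $\RR$ whose points are pairwise at distance at least $1/n$ injects into $\QQ$ — for instance by selecting, for each $a\in A_n$, a rational $p_a\in(a-\tfrac{1}{2n},a)$, the separation guaranteeing that distinct $a$ yield distinct $p_a$. Countability of $\QQ$ then gives that each $A_n$ is at most countable, and reassembling the bins finishes the proof.
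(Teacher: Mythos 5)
Your proposal follows the paper's own route step for step: bin the points of $A_f$ by the dyadic scale $1/n$ of the witness $\delta_a$, claim a uniform separation $|a-a'|\ge 1/n$ inside each bin $A_n$, and inject each bin into $\QQ$. You have also correctly isolated the one step that carries all the weight — the separation claim — but you leave it unverified, and that is a genuine gap: for the one-sided definition \eqref{def_A_f} the step cannot be completed. If $a<a'$ with $a'-a<1/n$, the defining property at $a'$ does give $f(a)<f(a')$, since $a\in(a'-1/n,a')$; but the defining property at $a$ only constrains $f$ on $(a-1/n,a)$, an interval that does not contain $a'$, so there is no ``symmetric'' reverse inequality $f(a')<f(a)$ to set against it. In fact both the separation claim and the lemma as literally stated are false: for $f(x)=x$ every real $a$ satisfies $f(x)<f(a)$ on $(a-1,a)$, so $A_f=\RR$ is uncountable. (The paper's own proof asserts the same pair of incompatible inequalities without justification and has the same defect.)

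The argument does become correct under either of two repairs. For genuinely two-sided strict local maxima, i.e. $f(x)<f(a)$ for all $x\in(a-\delta,a+\delta)\setminus\{a\}$, each of two points at distance less than $1/n$ lies in the other's punctured neighborhood, and the contradiction $f(a)<f(a')<f(a)$ is legitimate; the rest of your proof (injection of each separated bin into $\QQ$ via a rational in $(a-\tfrac{1}{2n},a)$, then a countable union) is fine. Alternatively, in the only place the lemma is invoked (Proposition \ref{properties_obel}, part c)), the relevant set $A_\phi^\dag$ carries the additional constraint $\phi^\dag(b)=\phi(b)$, i.e. $\phi\le\phi(b)$ on $[b,r_b]$; for $a<a'$ this supplies the missing reverse inequality $f(a')\le f(a)$ in weak form, which already contradicts $f(a)<f(a')$. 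Either way, the missing ingredient is a right-sided condition; it cannot be extracted from the left-sided definition alone, so as written your proof (like the paper's) does not close.
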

\begin{proposition}[Additional properties for the transformation $\dag$] \label{properties_obel}
Let $p \in [1,+\infty]$ and $\phi \in W^{1,p}(1,r_{b})$. Consider $\phi^{\dag}$ defined by \eqref{obel_transform} and the set
\[
A_{\phi}^{\dag} := \lbrace b \in (1,r_{b}) \: : \: \exists \delta > 0, \: \forall x \in (b-\delta,b)\:, \phi(x) < \phi(b) \text{ and }  \phi^{\dag}(b) = \phi(b) \rbrace.
\]
One has then has following:
\begin{itemize}
\item[ a)] $\phi^{\dag}$ is continuous in $[1,r_{b}]$. 
\item[b)] Let $ 1\leq a < b \leq r_{b}$ such that $\phi^{\dag} - \phi > 0$ on $(a,b).$ Then $\phi^{\dag}$ is constant on $(a,b)$.
\item[c)] $\lbrace x \in (1,r_{b}) \: : \: \phi^{\dag}(x) -\phi(x) > 0 \rbrace = \underset{n \in I}{\cup} (a_{n},b_{n})$ where $(b_{n})_{n \in I}$ is a bijection from a subset $I \subseteq \NN$ into $A^{\dag}_{\phi}$ and the sequence $(a_{n})_{n \NN}$ is given by 
\begin{align*}
    \forall n \in I, \quad a_{n} := \inf \lbrace a \in (1,r_{b}) \: : \: \forall x \in (a,b_{n}), \phi(x) < \phi^{\dag}(b_{n}) \rbrace. 
\end{align*}
Moreover, for all $ n \in I$ such that $\phi^{\dag}(b_{n})$ is not the maximum value of $\phi$, one has $\phi(a_{n}) = \phi^{\dag}(a_{n}) = \phi^{\dag}(b_{n}).$ The intervals $((a_{n},b_{n}))_{n \in I}$ are disjoints.

\item [d)] $\phi^{\dag} \in W^{1,p}(1,r_{b})$, $(\phi^{\dag})' = \underset{ \lbrace \phi^{\dag} = \phi \rbrace }{\mathbbm{1}} \phi'$, and $\| (\phi^{\dag})' \|_{L^{p}} \leq \| \phi' \|_{L^{p}}.$ 
\end{itemize}
\begin{proof} 
a) Let $x,y \in [1,r_{b}]$ and assume without loss of generality that $x < y.$ The function $\phi^{\dag}$ being non increasing, one has
\begin{align*}
    \left \vert \phi^{\dag}(x)- \phi^{\dag}(y) \right \vert = \left \vert \underset{ x' \in [x,r_{b}]}{\max \phi(x')} - \underset{ x'' \in [y,r_{b}] }{\max \phi(x'')} \right \vert  = \underset{ x' \in [x,r_{b}]}{\max \phi(x')} - \underset{ x'' \in [y,r_{b}] }{\max \phi(x'')} .
\end{align*}
If $\underset{[x,r_{b}]}{\max}\ \phi = \underset{[y,r_{b}]}{\max}\ \phi$ then the difference in the above equality vanishes.  Otherwise, one has $\underset{[x,r_{b}]}{\max}\ \phi > \underset{[y,r_{b}]}{\max}\ \phi$ and therefore $\underset{[x,r_{b}]}{\max}\ \phi = \underset{[x,y]}{\max}\ \phi$. It yields,
  \begin{align*}
   &\left \vert \phi^{\dag}(x)- \phi^{\dag}(y) \right \vert = \underset{x'\in [x,y]}{\max}\ \phi(x') - \underset{ x'' \in [y,r_{b}] }{\max \phi(x'')} \leq  \underset{x'\in [x,y]}{\max}\ \phi(x') - \phi(y) \leq  \underset{x'\in [x,y]}{\max}\ \left( \phi(x')- \phi(y)\right),
\end{align*}
where one has used the fact that $\phi(y) \leq \underset{ x'' \in [y,r_{b}] }{\max \phi(x'')}$. The conclusion then follows from the continuity of $\phi.$ \newline\newline
b) Let $1 \leq a < b \leq r_{b}$ such that for all $x \in (a,b),$ $\phi(x) < \phi^{\dag}(x).$  Moving $b$ if necessary, one assumes that $\phi(b) = \phi^{\dag}(b).$ One shows that for all $x \in (a,b),$ $\phi^{\dag}(x) := \underset{ x' \in [x,r_{b}]}{\max} \ \phi(x') = \underset{ x' \in [b,r_{b}] }{ \max}\phi(x') =: \phi^{\dag}(b)$. Assume for the sake of the contradiction it is not the case. Then there is $x \in (a,b)$ such that $\underset{ x' \in [x,r_{b}]}{\max} \ \phi(x') > \underset{ x' \in [b,r_{b}] }{ \max}\phi(x')$. Therefore there is $ c \in (x, b)$ such that $\phi(c)> \underset{ x' \in [b,r_{b}] }{ \max}\phi(x') = \phi^{\dag}(b) = \phi(b).$ One can thus consider the point $c$ given by $c = \underset{r \in [x,b]}{\arg \max} \ \phi(r)$ (this point exists by continuity of $\phi$). At this point, one has
$
    \phi(c) = \underset{ [x,b]}{\max} \ \phi = \underset{ [c,b]}{\max} \ \phi = \underset{ [c,r_{b}]}{\max} \ \phi
$
where the last equality holds because $\phi(c) > \underset{ x' \in [b,r_{b}] }{ \max}\phi(x').$ One eventually remarks that by definition one has $\underset{ [c,r_{b}]}{\max} \ \phi = \phi^{\dag}(c)$ and thus $\phi(c) = \phi^{\dag}(c)$ which yields a contradiction.
\newline\newline
c) In virtue of Lemma \ref{countability_max}, the  set of points in $(1,r_{b})$ that are strict left local maxima of $\phi$ is at most countable so is the case for the subset $A_{\phi}^{\dag}$. Therefore there exists  a bijection $b: I \rightarrow A_{\phi}^{\dag}$ where $I \subseteq \NN.$ One now justifies the existence of the sequence $(a_{n})_{n \in I}.$ For each $n \in I$ the set $\lbrace a \in (1,r_{b}) \: : \: \forall x \in (a,b_{n}) \: \ \phi(x) < \phi^{\dag}(b_{n}) \rbrace$ is not empty since $b_{n}$ corresponds to a strict local maxima of $\phi$ that is $\phi^{\dag}(b_{n}) =  \phi(b_{n}).$ Since it is moreover lower bounded, the infimum exists. Therefore the sequence $(a_{n})_{n \in \NN}$ is well-defined. Since $\phi^{\dag}(b_{n})$ is not a maximum value of $\phi.$, by continuity of the function $\phi$, one has $\phi(a_{n}) = \phi^{\dag}(b_{n})$. Using the property a) and b), $\phi^{\dag}$ is constant in the interval $[a_{n},b_{n}]$, one has then $\phi^{\dag}(a_{n}) = \phi(a_{n}) = \phi^{\dag}(b_{n})$. One now proves that the intervals $((a_{n},b_{n}))_{n \in I}$ are disjoints. If $n,m \in I$ are such that $n \neq m$ then $b_{n} \neq b_{m}$ because $b$ is bijective. One assumes without loss of generality that $b_{n} < b_{m}$. Then necessarily  $b_{n} \leq a_{m},$ otherwise if $b_{n} > a_{m}$, one has one the one hand $\phi(b_{n}) < \phi^{\dag}(b_{m}) = \phi(b_{m})$ and on the other hand $\phi(a_{m}) < \phi^{\dag}(b_{n}) = \phi(b_{n}).$ But one has also by definition $\phi(a_{m}) = \phi^{\dag}(b_{m}) = \phi(b_{m})$, therefore one has both $\phi(b_{m}) < \phi(b_{n})$ and $\phi(b_{n}) < \phi(b_{m})$ which is a contradiction, thus $ b_{n} \leq a_{m}$.  Consequently, the open intervals $(a_{n},b_{n})$ are disjoints. One shows the equality of the sets.
By definition of the sequences $(a_{n})_{n \in I}$ and $(b_{n})_{n \in I}$ one has $\underset{n \in I}{\cup} (a_{n},b_{n}) \subset \lbrace x \in (1,r_{b}) \: : \: \phi^{\dag}(x) -\phi(x) > 0 \rbrace.$  For the reverse embedding, one takes $x \in (1,r_{b})$ such that $\phi^{\dag}(x) > \phi(x).$ By continuity there exists $ 1 \leq a < x < b \leq r_{b}$ such that  for all $y \in (a,b)$, $\phi^{\dag}(y) > \phi(y).$  Therefore consider the two numbers
\begin{align*}
  a^{*} = \inf \lbrace a' \leq a \: : \: \phi^{\dag}(y) > \phi(y) \:  \forall y \in (a',x) \rbrace,\\
  b^{*} = \sup \lbrace b' \geq b \: : \: \phi^{\dag}(y) > \phi(y) \: \forall y \in (x,b') \rbrace.
\end{align*}
By continuity of the function $\phi^{\dag}-\phi$, one has $\phi^{\dag}(a^{*})= \phi(a^{*})$ and $\phi^{\dag}(b^{*}) = \phi(b^{*}).$ Moreover, using the point a) and b), $\phi^{\dag}$ is constant on the interval $[a^{*},b^{*}]$. Therefore for all $y\in [a^{*},b^{*}]$, $\phi^{\dag}(y) = \phi^{\dag}(b^{*}) = \phi(b^{*})$. Thus, it implies that  for all $y \in (a^{*},b^{*})$, $\phi(y) < \phi^{\dag}(y) = \phi^{\dag}(b^{*}) = \phi(b^{*})$ thus $b^{*} \in A_{\phi}^{\dag}$. Since the set $A_{\phi}^{\dag}$ is at most countable there exists $n \in I$ such that $b_{*} = b_{n}.$ By construction one also has $a^{*} = a_{n}$ which shows that $ \lbrace x \in (1,r_{b}) \: : \: \phi^{\dag}(x) - \phi(x) > 0 \rbrace \subset \underset{n \in I}{\cup} (a_{n},b_{n}).$

d) Using the point a) $\phi^{\dag}$ is a continuous function on the compact set $[1,r_{b}]$, it is therefore bounded and thus in $L^{p}(1,r_{b}).$ Let $\psi \in C^{\infty}_{c}(1,r_{b})$, then one has
\begin{align*}
    \int_{1}^{r_{b}} \phi^{\dag}(x) \psi'(x) dx = \int_{ \lbrace \phi^{\dag}- \phi > 0 \rbrace} \phi^{\dag}(x) \psi'(x)dx + \int_{\lbrace \phi^{\dag} = \phi \rbrace} \phi(x) \psi'(x)dx.
\end{align*}
Using the point c), one has $\lbrace \phi^{\dag}- \phi > 0 \rbrace = \underset{ n \in I} \cup (a_{n},b_{n})$ where $I \subseteq \NN$ and the two sequences $(a_{n})_{n \in I}$ and $(b_{n})_{n \in I}$ are such that $a_{n} < b_{n}$, $\phi^{\dag}(a_{n}) = \phi(a_{n}) = \phi(b_{n}) = \phi^{\dag}(b_{n})$ for all $n \in I$.  If $I$ is finite then $\lbrace \phi^{\dag}- \phi > 0 \rbrace$ is a finite union of disjoints intervals. The conclusion then follows after decomposing the integral into a finite sum of integrals on each intervals and using integration by parts. If $I$ is not finite then $I= \NN$ and $\lbrace \phi^{\dag}- \phi > 0 \rbrace$ is countable union of the disjoint intervals  $(a_{n},b_{n})$. One therefore obtains
\begin{align*}
    \int_{ \lbrace \phi^{\dag}- \phi > 0 \rbrace} \phi^{\dag}(x) \psi'(x)dx = \sum_{n \in \NN} \int_{a_{n}}^{b_{n}} \phi^{\dag}(x) \psi'(x)dx,
\end{align*}
where the above sum is convergent because it is absolutely convergent. Indeed for $N \in \NN$, the partial sum $S_{N} = \displaystyle \sum_{n = 0}^{N} \int_{a_{n}}^{b_{n}} \vert \phi^{\dag}(x) \psi'(x) \vert dx$ is non decreasing and upper bounded: for all $N \in \NN$, $S_{N}\leq \int_{1}^{r_{b}} \vert \phi^{\dag}(x) \psi'(x) \vert dx < +\infty. $  
Using the fact that $\phi^{\dag}$ is constant in the interval $[a_{n},b_{n}]$, one has
\begin{align*}
    \int_{\lbrace \phi^{\dag}-\phi > 0 \rbrace} \phi^{\dag}(x) \psi'(x)dx = \sum_{n \in \NN} \phi^{\dag}(b_{n}) (\psi(b_{n})-\psi(a_{n})).
\end{align*}
On the complementary set  $\lbrace \phi^{\dag} = \phi \rbrace = \underset{n \in \NN} \cap (1,r_{b}) \setminus (a_{n},b_{n})$, one has also
\begin{align}
&\int_{\lbrace \phi^{\dag} = \phi \rbrace} \phi(x) \psi'(x)dx = \left( \sum_{n \in \NN} \phi^{\dag}(b_{n})( \psi(a_{n}) - \psi(b_{n})) \right) - \int_{\lbrace \phi^{\dag} = \phi \rbrace} \phi'(x) \psi(x)dx.
\end{align}
Gathering the two integrals together, the boundary terms eventually cancel and one obtains
\begin{align*}
    \int_{1}^{r_{b}} \phi^{\dag}(x) \psi'(x)dx = - \int_{1}^{r_{b}} \mathbbm{1}_{\lbrace \phi^{\dag} = \phi \rbrace}(x) \phi'(x) \psi(x)dx.
\end{align*}
Since $\phi'$ is in $L^{p}(1,r_{b})$ so is the case for the function $\mathbbm{1}_{\lbrace \phi^{\dag} = \phi \rbrace} \phi'$. One thus deduces that $\phi^{\dag}$ is in $W^{1,p}(1,r_{b})$ and that its weak derivative is given almost everywhere in $(1,r_{b})$ by $(\phi^{\dag})' = \mathbbm{1}_{\lbrace \phi^{\dag} = \phi \rbrace} \phi'.$ One therefore easily gets the inequality $\| (\phi^{\dag})' \|_{L^{p}} \leq \| \phi' \|_{L^{p}}.$ It concludes the proof.
\end{proof}
\end{proposition}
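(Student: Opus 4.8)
The plan is to treat the four assertions in order, exploiting that $\phi\in W^{1,p}(1,r_b)$ embeds into $C([1,r_b])$ in one dimension, so that $\phi$ is continuous and $\phi^{\dag}$ is a well-defined non-increasing function on $[1,r_b]$. The first three parts are purely topological and use only this continuity; the regularity statement (d) is where the genuine work lies.

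For (a) I would exploit monotonicity directly: for $x<y$ the difference $\phi^{\dag}(x)-\phi^{\dag}(y)$ is non-negative, and it vanishes unless the maximum of $\phi$ over $[x,r_b]$ is attained in $[x,y]$. In that remaining case one bounds $\phi^{\dag}(x)-\phi^{\dag}(y)\le\max_{[x,y]}\big(\phi-\phi(y)\big)$, which tends to $0$ as $y-x\to0$ by uniform continuity of $\phi$ on the compact $[1,r_b]$. For (b), on an interval $(a,b)$ where $\phi^{\dag}>\phi$, I would argue that for $x\in(a,b)$ the maximum defining $\phi^{\dag}(x)$ cannot be attained at any interior point $c\in(a,b)$: were it attained there, monotonicity would force $\phi^{\dag}(x)\ge\phi^{\dag}(c)\ge\phi(c)=\phi^{\dag}(x)$, hence $\phi^{\dag}(c)=\phi(c)$, contradicting $\phi^{\dag}(c)>\phi(c)$. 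Thus $\phi^{\dag}(x)=\max_{[b,r_b]}\phi$ is constant on $(a,b)$.

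For (c) the set $\{\phi^{\dag}>\phi\}$ is open by continuity of $\phi^{\dag}-\phi$, hence a countable disjoint union of open intervals $(a_n,b_n)$; on each of these $\phi^{\dag}$ is constant by (b), and by continuity $\phi^{\dag}=\phi$ at the endpoints. I would then identify each right endpoint $b_n$ as a strict left local maximum of $\phi$ with $\phi^{\dag}(b_n)=\phi(b_n)$, that is, an element of $A^{\dag}_{\phi}$, and invoke Lemma~\ref{countability_max} to get countability and the bijection. The extremal (infimum) definition of $a_n$, together with the constancy of $\phi^{\dag}$ and continuity, yields both the endpoint identities $\phi(a_n)=\phi^{\dag}(a_n)=\phi^{\dag}(b_n)$ (when $\phi^{\dag}(b_n)$ is not the global maximum) and the disjointness of the intervals.

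The crux is (d), and I expect it to be the main obstacle. Rather than differentiate pointwise, I would compute the distributional derivative directly: for $\psi\in C^{\infty}_c(1,r_b)$ split $\int_1^{r_b}\phi^{\dag}\psi'$ over $\{\phi^{\dag}>\phi\}=\bigcup_n(a_n,b_n)$ and its complement $\{\phi^{\dag}=\phi\}$. On each $(a_n,b_n)$ constancy gives $\int_{a_n}^{b_n}\phi^{\dag}\psi'=\phi^{\dag}(b_n)\big(\psi(b_n)-\psi(a_n)\big)$, while an integration by parts on $\{\phi^{\dag}=\phi\}$ produces $-\int_{\{\phi^{\dag}=\phi\}}\phi'\psi$ plus boundary contributions at the points $a_n,b_n$. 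The delicate step is justifying this when $I$ is infinite: one must control the series $\sum_n\int_{a_n}^{b_n}\phi^{\dag}\psi'$, whose absolute convergence is immediate from $\sum_n\int_{a_n}^{b_n}|\phi^{\dag}\psi'|\le\int_1^{r_b}|\phi^{\dag}\psi'|<\infty$, and then verify that, because $\phi(a_n)=\phi(b_n)=\phi^{\dag}(b_n)$, the boundary contributions from the integration by parts cancel exactly against the constant-piece sum $\sum_n\phi^{\dag}(b_n)\big(\psi(b_n)-\psi(a_n)\big)$. After this cancellation one is left with $\int_1^{r_b}\phi^{\dag}\psi'=-\int_1^{r_b}\mathbbm{1}_{\{\phi^{\dag}=\phi\}}\phi'\psi$, which identifies the weak derivative as $(\phi^{\dag})'=\mathbbm{1}_{\{\phi^{\dag}=\phi\}}\phi'$; since $|(\phi^{\dag})'|\le|\phi'|$ pointwise a.e., the bound $\|(\phi^{\dag})'\|_{L^p}\le\|\phi'\|_{L^p}$ follows at once.
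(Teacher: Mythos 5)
Your proposal is correct and follows essentially the same route as the paper's proof: the same monotonicity and uniform-continuity argument for (a), the same argmax contradiction for (b), the same interval decomposition with the endpoint identities for (c), and the same distributional integration by parts with cancellation of the boundary terms (and the same absolute-convergence justification for the infinite sum) for (d). The only cosmetic difference is in (c), where you obtain the intervals as the connected components of the open set $\lbrace \phi^{\dag}-\phi>0\rbrace$ and then identify their right endpoints with elements of $A^{\dag}_{\phi}$, whereas the paper enumerates $A^{\dag}_{\phi}$ first via Lemma~\ref{countability_max} and reconstructs the intervals from it; the two directions are equivalent and use the same ingredients.
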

\section*{Appendix: Proof of proposition \ref{prop:weak_sol}}
\label{appendix}

\begin{proof}
Let $f_{i}^{b}$ be an essentially bounded function, therefore $f_{i}$ defined by \eqref{def_fi} belongs to $L^{1}_{\textnormal{loc}}(Q)$. Let $\psi \in C^{1}(\overline{Q})$  compactly supported on $\overline{Q}$ and such that $\psi_{| \Sigma^{\textnormal{out}} }= 0$. Consider the function $\Psi$ defined for all $(r,v_{r},v_{\theta}) \in Q$ by 
\begin{align*}
   \Psi(r,v_{r},v_{\theta}) = v_{r} \partial_{r} \psi(r,v_{r},v_{\theta}) +  \left( \frac{v_{\theta}^{2}}{r} - \partial_{r} \phi(r) \right) \partial_{v_{r}} \psi(r,v_{r},v_{\theta}) - \frac{v_{r}}{r} \partial_{v_{\theta}} (v_{\theta} \psi)(r,v_{r},v_{\theta})
\end{align*}
where the function $\phi$ is in the space $W^{2,\infty}(1,r_{b}).$
One has using the Fubini theorem,
\begin{align*}
\int_{Q}  \Psi(r,v_{r},v_{\theta}) f_{i}(r,v_{r},v_{\theta})dv_{r}dv_{\theta} dr =\int_{1}^{r_{b}} \int_{\RR} \int_{\RR}\Psi(r,v_{r},v_{\theta}) f_{i}(r,v_{r},v_{\theta})dv_{\theta}dv_{r} dr.
\end{align*}
Using the change variable $L = r v_{\theta}$ in the integral with respect to $v_{\theta}$ one obtains,
\begin{align*}
    &\int_{Q}  \Psi(r,v_{r},v_{\theta}) f_{i}(r,v_{r},v_{\theta})dv_{r}dv_{\theta} dr= \int_{1}^{r_{b}} \int_{\RR} \int_{\RR}\Psi \left( r,v_{r},\frac{L}{r} \right) f_{i}\left( r,v_{r},\frac{L}{r} \right) \frac{1}{r} dL dv_{r} dr\\
    &=  \int_{-\infty}^{+\infty} \int_{[1,r_{b}] \times \RR} \frac{1}{r} \Psi\left( r,v_{r},\frac{L}{r}\right) f_{i}\left( r,v_{r},\frac{L}{r} \right) dv_{r}  dr dL. \\
\end{align*}
For $L \in \RR$ being fixed, the function $(r,v_{r})\mapsto  f_{i}(r,v_{r},L)$ vanishes on $\cD_{i}^{pc}(L)$, one therefore has
\begin{align*}
      &\int_{Q}  \Psi(r,v_{r},v_{\theta}) f_{i}(r,v_{r},v_{\theta})dv_{r}dv_{\theta} dr\\
      &= \int_{-\infty}^{+\infty}\int_{\cD_{i}^{b}(L)}\frac{1}{r} \Psi\left( r,v_{r},\frac{L}{r}\right) f_{i}^{b}\left(-\sqrt{v_{r}^2 +2\left( U_{L}(r)-U_{L}(r_{b}) \right) }, \frac{L}{r_{b}} \right) dv_{r}  dr dL\\
     & = \int_{-\infty}^{+\infty}\int_{\cD_{i}^{b,1}(L)}\frac{1}{r} \Psi\left( r,v_{r},\frac{L}{r}\right) f_{i}^{b}\left(-\sqrt{v_{r}^2 +2\left( U_{L}(r)-U_{L}(r_{b}) \right) }, \frac{L}{r_{b}} \right) dv_{r}  dr dL\\
     & +\int_{-\infty}^{+\infty}\int_{\cD_{i}^{b,2}(L)}\frac{1}{r} \Psi\left( r,v_{r},\frac{L}{r}\right) f_{i}^{b}\left(-\sqrt{v_{r}^2 +2\left( U_{L}(r)-U_{L}(r_{b}) \right) }, \frac{L}{r_{b}} \right) dv_{r}  dr dL
\end{align*}
where the sets $\cD_{i}^{b,1}(L)$ and $\cD_{i}^{b,2}(L)$ are defined respectively in \eqref{d_i_b_1} and \eqref{d_i_b_2}.
To continue the computation one considers for $(r,L) \in (1,r_{b}) \times \RR$ the two sets of radial velocities
\begin{align*}
    \cD_{i}^{b,1}(r,L) := \left \lbrace v_{r} \in \RR \: : \: v_{r} < - \sqrt{2(\overline{U}_{L} -U_{L}(r))} \right \rbrace,\\
    \cD_{i}^{b,2}(r,L) := \bigg \lbrace v_{r} \in  \RR \: : U_{L}(r_{b}) < \frac{v_{r}^2}{2} + U_{L}(r)< \overline{U_{L}} \:  , \: r > r_{i}\left( L,\frac{v_{r}^2}{2} + U_{L}(r) \right) \bigg \rbrace.
    \end{align*}
    For each couple $(r,L)$, these sets amount to pick the radial velocities that are on characteristics originating from the boundary $r = r_{b}$. 
One thus obtains 
\begin{align*}
    &\int_{Q}  \Psi(r,v_{r},v_{\theta}) f_{i}(r,v_{r},v_{\theta})dv_{r}dv_{\theta} dr\\
    &= \underbrace{ \int_{1}^{r_{b}}\int_{-\infty}^{+\infty} \int_{\mathcal{D}_{i}^{b,1}(r,L)}\frac{1}{r} \Psi\left( r,v_{r},\frac{L}{r}\right) f_{i}^{b}\left(-\sqrt{v_{r}^2 +2\left( U_{L}(r)-U_{L}(r_{b}) \right) }, \frac{L}{r_{b}} \right) dv_{r}  dL dr}_{:=I_{1}} \\
    &+\underbrace{\int_{1}^{r_{b}}\int_{-\infty}^{+\infty} \int_{\mathcal{D}_{i}^{b,2}(r,L)}\frac{1}{r} \Psi\left( r,v_{r},\frac{L}{r}\right) f_{i}^{b}\left(-\sqrt{v_{r}^2 +2\left( U_{L}(r)-U_{L}(r_{b}) \right) }. \frac{L}{r_{b}} \right) dv_{r}dL  dr}_{:=I_{2}}.
\end{align*}
To ease the reading, one sets for $(r,L) \in (1,r_{b}) \times \RR,$
\begin{align*}
    I_{1}(r,L) := \int_{\mathcal{D}_{i}^{b,1}(r,L)}\frac{1}{r} \Psi\left( r,v_{r},\frac{L}{r}\right) f_{i}^{b}\left(-\sqrt{v_{r}^2 +2\left( U_{L}(r)-U_{L}(r_{b}) \right) }, \frac{L}{r_{b}} \right) dv_{r},\\
    I_{2}(r,L) := \int_{\mathcal{D}_{i}^{b,2}(r,L)}\frac{1}{r} \Psi\left( r,v_{r},\frac{L}{r}\right) f_{i}^{b}\left(-\sqrt{v_{r}^2 +2\left( U_{L}(r)-U_{L}(r_{b}) \right) }, \frac{L}{r_{b}} \right) dv_{r}.
\end{align*}
One first computes $I_{1}$, so let $(r,L) \in (1,r_{b}) \times \RR$, one has 
\begin{align*}
    I_{1}(r,L) = \int_{-\infty}^{-\sqrt{2(\overline{U}_{L}-U_{L}(r))}} \frac{1}{r} \Psi\left( r,v_{r},\frac{L}{r}\right) f_{i}^{b}\left(-\sqrt{v_{r}^2 +2\left( U_{L}(r)-U_{L}(r_{b}) \right) }, \frac{L}{r_{b}} \right) dv_{r}.
\end{align*}
Using the change of variable $w_{r} = -\sqrt{v_{r}^2 +2\left( U_{L}(r)-U_{L}(r_{b}) \right) }$ yields
\begin{align*}
    I_{1}(r,L) = \int_{-\infty}^{-\sqrt{2(\overline{U}_{L}-U_{L}(r_{b}))}}\frac{1}{r} \frac{\Psi\left(r, -\sqrt{w_{r}^2 -2\left( U_{L}(r)-U_{L}(r_{b})\right)} , \frac{L}{r} \right)}{-\sqrt{w_{r}^2 -2\left( U_{L}(r)-U_{L}(r_{b})\right)}}f_{i}^{b}\left( w_{r},\frac{L}{r_{b}} \right) w_{r} dw_{r}.
\end{align*}
The integrand in $I_{1}$ has an apparent singularity at each point $r \in (1,r_{b})$ such that $U_{L}(r) = \overline{U}_{L}$. This singularity is integrable because the product $\Psi f_{i}^{b}$ is bounded. To go further, one considers for $(w_{r},L) \in \RR^2$ such that $w_{r} <- \sqrt{2(\overline{U}_{L}-U_{L}(r_{b}))}$, the restriction of the function $\psi$ to a characteristic curve of equation $v_{r} = \pm \sqrt{ w_{r}^2 -2(U_{L}(r)-U_{L}(r_{b}))}$. Then, we  set
\begin{align} \label{psi_pm}
    \psi^{\pm} : r \in (1,r_{b}) \mapsto \frac{1}{r} \psi \left( r,\pm \sqrt{w_{r}^2 -2\left( U_{L}(r)-U_{L}(r_{b})\right)} , \frac{L}{r} \right). 
\end{align}
Using the chain rule, one verifies that for all $r \in (1,r_{b}),$ 
\begin{align} \label{chain_rule_id}
\frac{d}{dr} \left(\frac{1}{r} \psi^{\pm}\right)(r)  =  \frac{1}{r} \frac{\Psi\left(r, \pm \sqrt{w_{r}^2 -2\left( U_{L}(r)-U_{L}(r_{b})\right)} ; \frac{L}{r} \right)}{\pm \sqrt{w_{r}^2 -2\left( U_{L}(r)-U_{L}(r_{b})\right)}}.
\end{align}
One therefore obtains (permuting the derivative and the integral) that
\begin{align*}
    I_{1}(r,L)= \frac{d}{dr} \left( \int_{-\infty}^{-\sqrt{2(\overline{U}_{L}-U_{L}(r_{b}))}}\frac{1}{r} \psi^{-}(r) f_{i}^{b}\left( w_{r},\frac{L}{r_{b}} \right) w_{r} dw_{r} \right).
\end{align*}
After an integration with respect to $L$ and with respect to $r$, one eventually gleans
\begin{align*}
 &I_{1} = \int_{1}^{r_{b}} \frac{d}{dr} \left( \int_{-\infty}^{+\infty}\int_{-\infty}^{-\sqrt{2(\overline{U}_{L}-U_{L}(r_{b}))}} \frac{1}{r} \psi^{-}(r)f_{i}^{b}\left( w_{r},\frac{L}{r_{b}} \right) w_{r} dw_{r} dL \right) dr\\
 = &\int_{-\infty}^{+\infty}\int_{-\infty}^{-\sqrt{2(\overline{U}_{L}-U_{L}(r_{b}))}} \frac{1}{r_{b}} \psi^{-}(r_{b})f_{i}^{b}\left( w_{r},\frac{L}{r_{b}} \right) w_{r} dw_{r} dL\\
 = & \int_{-\infty}^{+\infty}\int_{-\infty}^{-\sqrt{2(\overline{U}_{L}-U_{L}(r_{b}))}} \frac{1}{r_{b}}\psi\left( r_{b},w_{r},\frac{L}{r_{b}} \right) f_{i}^{b}\left( w_{r},\frac{L}{r_{b}} \right) w_{r}dw_{r} dL.
\end{align*}
where one has used the fact that $\psi^{-}(1) = 0$ because $\psi$ vanishes on $\Sigma^{\textnormal{out}}.$ One deals with the computation of $I_{2}$. One sees that $I_{2}$ splits as
\begin{align*}
    I_{2} = \int_{1}^{r_{b}} \int_{-\infty}^{+\infty} \mathbbm{1}_{\lbrace U_{L}(r_{b}) - U_{L}(r) < 0 \rbrace} I_{2}(r,L) dL dr + \int_{1}^{r_{b}} \int_{-\infty}^{+\infty} \mathbbm{1}_{\lbrace U_{L}(r_{b}) - U_{L}(r) \geq 0 \rbrace} I_{2}(r,L) dL dr.
\end{align*}
For the sake of conciseness, one restricts the computation in the case where for all $L \in \RR$,  $\displaystyle U_{L}(r) > U_{L}(r_{b})$  for all $r \in (1,r_{b})$. The other case can be treated with similar computations. So consider
\begin{align*}
   I_{2} = \int_{1}^{r_{b}} \int_{-\infty}^{+\infty} \int_{\cD_{i}^{b,2}(r,L)}  \frac{1}{r} \Psi\left( r,v_{r},\frac{L}{r}\right) f_{i}^{b}\left(-\sqrt{v_{r}^2 +2\left( U_{L}(r)-U_{L}(r_{b}) \right) }, \frac{L}{r_{b}} \right) dv_{r}dL  dr
\end{align*}
where
\[
\cD_{i}^{b,2}(r,L) = \bigg \lbrace v_{r} \in  \RR \: : \vert v_{r} \vert < \sqrt{2\left( \overline{U}_{L} -U_{L}(r) \right)} \:  , \: r > r_{i}\left( L,\frac{v_{r}^2}{2} + U_{L}(r) \right) \bigg \rbrace.
\]
One recalls that this set is associated with characteristics curves that originates from $r = r_{b}$ and go back to $r = r_{b}.$
One remarks that the condition $r > r_{i}\left(L, \frac{v_{r}^{2}}{2} + U_{L}(r) \right)$ is equivalent to $U_{L}^{\dag}(r) \leq \frac{v_{r}^2}{2} + U_{L}(r)$ where $U_{L}^{\dag}$ is smallest non increasing function such that $U_{L}^{\dag} \geq U_{L}$. It is in particular given by \eqref{obel_transform}.
Therefore one has,
\[
\cD_{i}^{b,2}(r,L) = \bigg \lbrace v_{r} \in  \RR \: : \vert v_{r} \vert < \sqrt{2\left( \overline{U}_{L} -U_{L}(r) \right)} \:  , \: \vert v_{r} \vert  \geq \sqrt{2\left( U_{L}^{\dag}(r)-U_{L}(r) \right) }  \bigg \rbrace.
\]
One decomposes this set into $\mathcal{D}_{i}^{b,2}(r,L) = \mathcal{D}_{i}^{b,2,+}(r,L) \cup \mathcal{D}_{i}^{b,2,-}(r,L)$
with
\begin{align*}
    \cD_{i}^{b,2,+}(r,L) = \bigg \lbrace v_{r} \in  \RR \: : \sqrt{2\left( U_{L}^{\dag}(r)-U_{L}(r) \right) }  \leq  v_{r} < \sqrt{2\left( \overline{U}_{L} -U_{L}(r) \right)} \:  \bigg \rbrace,\\
    \cD_{i}^{b,2,-}(r,L) = \bigg \lbrace v_{r} \in  \RR \: :  -\sqrt{2\left( \overline{U}_{L} -U_{L}(r) \right)} < v_r \leq - \sqrt{2\left( U_{L}^{\dag}(r)-U_{L}(r) \right) }  \bigg \rbrace.
\end{align*}
Using the change of variable $w_{r} = -\sqrt{v_{r}^2 +2\left( U_{L}(r)-U_{L}(r_{b}) \right)}$ one gets
\begin{align*}
   & I_{2} = \int_{1}^{r_{b}}\int_{-\infty}^{+\infty}\int_{-\sqrt{2\left(\overline{U}_{L}-U_{L}(r_{b})\right)}}^{-\sqrt{2\left(U_{L}^{\dag}(r)-U_{L}(r_{b}) \right)}} \frac{\Psi\left(r, -\sqrt{w_{r}^2 -2\left( U_{L}(r)-U_{L}(r_{b})\right)} , \frac{L}{r} \right)}{-\sqrt{w_{r}^2 -2\left( U_{L}(r)-U_{L}(r_{b})\right)}}f_{i}^{b}\left( w_{r},\frac{L}{r_{b}} \right) w_{r} dw_{r}dL dr \\
    &- \int_{1}^{r_{b}}\int_{-\infty}^{+\infty}\int_{-\sqrt{2\left(\overline{U}_{L}-U_{L}(r_{b})\right)}}^{-\sqrt{2\left(U_{L}^{\dag}(r)-U_{L}(r_{b}) \right)}} \frac{\Psi\left(r, \sqrt{w_{r}^2 -2\left( U_{L}(r)-U_{L}(r_{b})\right)} , \frac{L}{r} \right)}{\sqrt{w_{r}^2 -2\left( U_{L}(r)-U_{L}(r_{b})\right)}}f_{i}^{b}\left( w_{r},\frac{L}{r_{b}} \right) w_{r} dw_{r}dL dr.
\end{align*}
Using again the identity \eqref{chain_rule_id}, one obtains
\begin{align*}
    &I_{2} =       \int_{1}^{r_{b}}\int_{-\infty}^{+\infty} \int_{-\sqrt{2\left(\overline{U}_{L}-U_{L}(r_{b})\right)}}^{-\sqrt{2\left(U_{L}^{\dag}(r)-U_{L}(r_{b}) \right)}} \frac{d}{dr} \left( \frac{1}{r} ( \psi^{-}  - \psi^{+})(r) \right) f_{i}^{b}\left(w_{r},\frac{L}{r_{b}} \right) w_{r} dw_{r} dL dr.\\
\end{align*}
One now justifies the regularity of $U^{\dag}_{L}$ in order to use the chain rule. Since $\phi$ belongs to $W^{2,\infty}(1,r_{b})$, it belongs in particular to $W^{1,\infty}(1,r_{b})$. Therefore for all $L \in \RR$, the function $U_{L}$ is in the space $W^{1,\infty}(1,r_{b})$. One can thus apply the properties d) of Lemma \ref{properties_obel} with $p = +\infty.$ So one has $U_{L}^{\dag} \in W^{1,\infty}(1,r_{b})$. Since moreover, for all $r \in (1,r_{b})$, $U_{L}(r) > U_{L}(r_{b})$, one has also $U_{L}^{\dag}(r) > U_{L}(r_{b})$. Thus, for each $L \in \RR$, one obtains using the chain rule that for almost every $r \in (1,r_{b}),$
\begin{align*}
    &\frac{d}{dr} \int_{-\sqrt{2\left(\overline{U}_{L}-U_{L}(r_{b})\right)}}^{-\sqrt{2\left(U_{L}^{\dag}(r)-U_{L}(r_{b}) \right)}}\frac{1}{r} ( \psi^{-}  - \psi^{+})(r) f_{i}^{b}\left(w_{r},\frac{L}{r_{b}} \right) w_{r} dw_{r} = \\
    &\frac{ -(U_{L}^{\dag})'(r) }{r \sqrt{2\left( U_{L}^{\dag}(r) - U_{L}(r_{b}) \right)}}
    \left[ \psi \left( r,-\sqrt{2(U_{L}^{\dag}(r)-U_{L}(r))}, \frac{L}{r} \right) - \psi \left( r,\sqrt{2(U_{L}^{\dag}(r)-U_{L}(r))},\frac{L}{r} \right) \right]\\
    &+\int_{-\sqrt{2\left(\overline{U}_{L}-U_{L}(r_{b})\right)}}^{-\sqrt{2\left(U_{L}^{\dag}(r)-U_{L}(r_{b}) \right)}} \frac{d}{dr} \left( \frac{1}{r} ( \psi^{-}  - \psi^{+})(r) \right) f_{i}^{b}\left(w_{r},\frac{L}{r_{b}} \right) w_{r}dw_{r}.
\end{align*}
One remarks that the first term, which is a product, vanishes almost everywhere in $(1,r_{b})$: in the set where $U_{L}^{\dag}$ and $U_{L}$ are equal, the term in brackets vanishes because the difference vanishes. In the complementary set, $(U_{L}^{\dag})'$  vanishes almost everywhere because of the property d) of Lemma \ref{properties_obel}. Thus, integrating with respect to $L$ and $r$ one gets
\begin{align*}
  &I_{2} = \int_{1}^{r_{b}} \frac{d}{dr} \int_{-\infty}^{+\infty}  \int_{-\sqrt{2\left(\overline{U}_{L}-U_{L}(r_{b})\right)}}^{-\sqrt{2\left(U_{L}^{\dag}(r)-U_{L}(r_{b}) \right)}}\frac{1}{r} ( \psi^{-}  - \psi^{+})(r) f_{i}^{b}\left(w_{r},\frac{L}{r_{b}} \right) w_{r} dw_{r} dL dr. \\
\end{align*}
The integration with respect to $r$ eventually gives only the boundary term at $r = r_{b}$ because the other one vanishes since $\psi$ vanishes on $\Sigma^{\textnormal{out}}.$ One eventually gleans
\begin{align*}
    I_{2}= \int_{-\infty}^{+\infty}\int_{-\sqrt{2\left(\overline{U}_{L}-U_{L}(r_{b})\right)}}^{0}\frac{1}{r_{b}}\psi\left( r_{b},w_{r},\frac{L}{r_{b}} \right) f_{i}^{b}\left( w_{r},\frac{L}{r_{b}} \right) w_{r}dw_{r} dL\\
\end{align*}
where one uses the equality $U_{L}^{\dag}(r_{b}) = U_{L}(r_{b})$ and the fact that $\psi_{| \Sigma^{\textnormal{out}}} = 0$.
Gathering the integrals $I_{1}$ and $I_{2}$ together, one eventually concludes
\begin{align*}
    &\int_{Q}  \Psi(r,v_{r},v_{\theta}) f_{i}(r,v_{r},v_{\theta})dv_{r}dv_{\theta} dr = I_{1} +I_{2} = \int_{-\infty}^{+\infty}\int_{-\infty}^{0} \frac{1}{r_{b}}\psi\left( r_{b},w_{r},\frac{L}{r_{b}} \right) f_{i}^{b}\left( w_{r},\frac{L}{r_{b}} \right) w_{r}dw_{r} dL \\
    &=  \int_{-\infty}^{+\infty} \int_{-\infty}^{0} \psi\left( r_{b},w_{r},v_{\theta} \right) f_{i}^{b}\left( w_{r},v_{\theta} \right)w_{r}dw_{r} dv_{\theta}.
\end{align*}
\end{proof}

\bibliographystyle{elsarticle-harv}
\bibliography{references}
\end{document}